\numberwithin{equation}{section}
\theoremstyle{plain}
\newtheorem{theorem}{Theorem}[section]
\newtheorem{corollary}[theorem]{Corollary}
\newtheorem{lemma}[theorem]{Lemma}
\theoremstyle{definition}
\theoremstyle{remark}
\newtheorem*{remark}{Remark}
\begin{document}
%\layout
%%%%%%%%%%%%%%%%%%%%%%%%%%%%%%%%%%%%%%%%%%%%%%%%%%%%%%%%%
%\input laxmac.tex
%%%%%%%%%%%%%%%%%%%%%%%%%%%%%%%%%%%%%%%%%%%%%%%%%
%%%%%%%%%%%%  macrodefinitions
%%%%%%%%%%%%%%%%%%%%%%%%%%%%%%%%%%%%%%%%%%%%%%%%%
%  Macros (general)
%
%\newcommand{\M}{\mathcal{M}}

%%%%%%%%%%%%%%%%%%%%%%%
\newcommand{\M}{\mathcal{M}_{g,N+1}^{(1)}}
\newcommand{\Teich}{\mathcal{T}_{g,N+1}^{(1)}}
\newcommand{\T}{\mathrm{T}}
%%%%   temporary
\newcommand{\corr}{\bf}
\newcommand{\vac}{|0\rangle}
\newcommand{\Ga}{\Gamma}
\newcommand{\new}{\bf}
\newcommand{\define}{\def}
\newcommand{\redefine}{\def}
\newcommand{\Cal}[1]{\mathcal{#1}}
\renewcommand{\frak}[1]{\mathfrak{{#1}}}
%%%%%%%%%%%%%%%%%%%%%%%%%%%%%%%%%%%%
%   Referencing Scheme of Martin
%%%%%%%%%%%%%%%%%%%%%%%%%%
\newcommand{\refE}[1]{(\ref{E:#1})}
\newcommand{\refS}[1]{Section~\ref{S:#1}}
\newcommand{\refSS}[1]{Section~\ref{SS:#1}}
\newcommand{\refT}[1]{Theorem~\ref{T:#1}}
\newcommand{\refO}[1]{Observation~\ref{O:#1}}
\newcommand{\refP}[1]{Proposition~\ref{P:#1}}
\newcommand{\refD}[1]{Definition~\ref{D:#1}}
\newcommand{\refC}[1]{Corollary~\ref{C:#1}}
\newcommand{\refL}[1]{Lemma~\ref{L:#1}}
%%%%%%%%%%%%%%%%%%%%%%%%%%%%%%%%%%
\newcommand{\R}{\ensuremath{\mathbb{R}}}
\newcommand{\C}{\ensuremath{\mathbb{C}}}
\newcommand{\N}{\ensuremath{\mathbb{N}}}
\newcommand{\Q}{\ensuremath{\mathbb{Q}}}
\renewcommand{\P}{\ensuremath{\mathcal{P}}}
\newcommand{\Z}{\ensuremath{\mathbb{Z}}}
%%%%%%%%%%%%%%%%%%%%%%%%%%%%%%%%%%%%%%%%%%
\newcommand{\kv}{{k^{\vee}}}
%%%%%%%%%%%%%%%%%%%%%%%%%%%%%%%%%%%%%%%%%%%%%
\renewcommand{\l}{\lambda}
%%%%%%%%%%%%%%%%%%%%%%%%%%%%%%%%%%%%%%%%%%%%%%%%%%
\newcommand{\gb}{\overline{\mathfrak{g}}}
\newcommand{\g}{\mathfrak{g}}
\newcommand{\gh}{\widehat{\mathfrak{g}}}
\newcommand{\ghN}{\widehat{\mathfrak{g}_{(N)}}}
\newcommand{\gbN}{\overline{\mathfrak{g}_{(N)}}}
\newcommand{\tr}{\mathrm{tr}}
\newcommand{\sln}{\mathfrak{sl}}
\newcommand{\sn}{\mathfrak{s}}
\newcommand{\so}{\mathfrak{so}}
\newcommand{\spn}{\mathfrak{sp}}
\newcommand{\tsp}{\mathfrak{tsp}(2n)}
\newcommand{\gl}{\mathfrak{gl}}
\newcommand{\slnb}{{\overline{\mathfrak{sl}}}}
\newcommand{\snb}{{\overline{\mathfrak{s}}}}
\newcommand{\sob}{{\overline{\mathfrak{so}}}}
\newcommand{\spnb}{{\overline{\mathfrak{sp}}}}
\newcommand{\glb}{{\overline{\mathfrak{gl}}}}
\newcommand{\Hwft}{\mathcal{H}_{F,\tau}}
\newcommand{\Hwftm}{\mathcal{H}_{F,\tau}^{(m)}}

%%%%%%%%%%%%%%%%%%%%%%%%%%%%%%%%%%%%%%%%%%%%%%%%%%%%
\newcommand{\car}{{\mathfrak{h}}}    % Cartan subalgebra
\newcommand{\bor}{{\mathfrak{b}}}    % Borel subalgebra
\newcommand{\nil}{{\mathfrak{n}}}    % nilpotent subalgebra
\newcommand{\vp}{{\varphi}}
\newcommand{\bh}{\widehat{\mathfrak{b}}}  % Borel subalgebra of KN algebra
\newcommand{\bb}{\overline{\mathfrak{b}}}  % Borel subalgebra of KN algebra
\newcommand{\Vh}{\widehat{\mathcal V}}
\newcommand{\KZ}{Kniz\-hnik-Zamo\-lod\-chi\-kov}
\newcommand{\TUY}{Tsuchia, Ueno  and Yamada}
\newcommand{\KN} {Kri\-che\-ver-Novi\-kov}
\newcommand{\pN}{\ensuremath{(P_1,P_2,\ldots,P_N)}}
\newcommand{\xN}{\ensuremath{(\xi_1,\xi_2,\ldots,\xi_N)}}
\newcommand{\lN}{\ensuremath{(\lambda_1,\lambda_2,\ldots,\lambda_N)}}
\newcommand{\iN}{\ensuremath{1,\ldots, N}}
\newcommand{\iNf}{\ensuremath{1,\ldots, N,\infty}}

\newcommand{\tb}{\tilde \beta}
\newcommand{\tk}{\tilde \kappa}
\newcommand{\ka}{\kappa}
\renewcommand{\k}{\kappa}

\newcommand{\Pif} {P_{\infty}}
\newcommand{\Pinf} {P_{\infty}}
\newcommand{\PN}{\ensuremath{\{P_1,P_2,\ldots,P_N\}}}
\newcommand{\PNi}{\ensuremath{\{P_1,P_2,\ldots,P_N,P_\infty\}}}
\newcommand{\Fln}[1][n]{F_{#1}^\lambda}
\newcommand{\tang}{\mathrm{T}}
\newcommand{\Kl}[1][\lambda]{\can^{#1}}
\newcommand{\A}{\mathcal{A}}
\newcommand{\U}{\mathcal{U}}
\newcommand{\V}{\mathcal{V}}
\renewcommand{\O}{\mathcal{O}}
\newcommand{\Ae}{\widehat{\mathcal{A}}}
\newcommand{\Ah}{\widehat{\mathcal{A}}}
\newcommand{\La}{\mathcal{L}}
\newcommand{\Le}{\widehat{\mathcal{L}}}
\newcommand{\Lh}{\widehat{\mathcal{L}}}
\newcommand{\eh}{\widehat{e}}
\newcommand{\Da}{\mathcal{D}}
\newcommand{\kndual}[2]{\langle #1,#2\rangle}
\newcommand{\cins}{\frac 1{2\pi\mathrm{i}}\int_{C_S}}
\newcommand{\cinsl}{\frac 1{24\pi\mathrm{i}}\int_{C_S}}
\newcommand{\cinc}[1]{\frac 1{2\pi\mathrm{i}}\int_{#1}}
\newcommand{\cintl}[1]{\frac 1{24\pi\mathrm{i}}\int_{#1 }}
\newcommand{\w}{\omega}
\newcommand{\ord}{\operatorname{ord}}
\newcommand{\res}{\operatorname{res}}
\newcommand{\nord}[1]{:\mkern-5mu{#1}\mkern-5mu:}
%%%%%%%%%%%%%%%%%%%%%%%%%%%%%%%%%%%%%%%%%%%%%%%%
\newcommand{\Fn}[1][\lambda]{\mathcal{F}^{#1}}
\newcommand{\Fl}[1][\lambda]{\mathcal{F}^{#1}}
\renewcommand{\Re}{\mathrm{Re}}

\newcommand{\ha}{H^\alpha}

\define\ldot{\hskip 1pt.\hskip 1pt}
\define\ifft{\qquad\text{if and only if}\qquad}
\define\a{\alpha}
\redefine\d{\delta}
\define\w{\omega}
\define\ep{\epsilon}
\redefine\b{\beta} \redefine\t{\tau} \redefine\i{{\,\mathrm{i}}\,}
\define\ga{\gamma}
\define\cint #1{\frac 1{2\pi\i}\int_{C_{#1}}}
\define\cintta{\frac 1{2\pi\i}\int_{C_{\tau}}}
\define\cintt{\frac 1{2\pi\i}\oint_{C}}
\define\cinttp{\frac 1{2\pi\i}\int_{C_{\tau'}}}
\define\cinto{\frac 1{2\pi\i}\int_{C_{0}}}
%\define\cinttt{\frac 1{24\pi\i}\int_{C_{\tau}}}
\define\cinttt{\frac 1{24\pi\i}\int_C}
\define\cintd{\frac 1{(2\pi \i)^2}\iint\limits_{C_{\tau}\,C_{\tau'}}}
\define\cintdr{\frac 1{(2\pi \i)^3}\int_{C_{\tau}}\int_{C_{\tau'}}
\int_{C_{\tau''}}}
\define\im{\operatorname{Im}}
\define\re{\operatorname{Re}}
%\define\res{\text{res}}
\define\res{\operatorname{res}}
\redefine\deg{\operatornamewithlimits{deg}}
\define\ord{\operatorname{ord}}
\define\rank{\operatorname{rank}}
\define\fpz{\frac {d }{dz}}
\define\dzl{\,{dz}^\l}
\define\pfz#1{\frac {d#1}{dz}}

\define\K{\Cal K}
\define\U{\Cal U}
\redefine\O{\Cal O}
\define\He{\text{\rm H}^1}
\redefine\H{{\mathrm{H}}}
\define\Ho{\text{\rm H}^0}
\define\A{\Cal A}
\define\Do{\Cal D^{1}}
\define\Dh{\widehat{\mathcal{D}}^{1}}
\redefine\L{\Cal L}
\newcommand{\ND}{\ensuremath{\mathcal{N}^D}}
\redefine\D{\Cal D^{1}}
\define\KN {Kri\-che\-ver-Novi\-kov}
\define\Pif {{P_{\infty}}}
\define\Uif {{U_{\infty}}}
\define\Uifs {{U_{\infty}^*}}
\define\KM {Kac-Moody}
\define\Fln{\Cal F^\lambda_n}
%%%%%%%%%%%%%%%%%%%%
\define\gb{\overline{\mathfrak{ g}}}
\define\G{\overline{\mathfrak{ g}}}
\define\Gb{\overline{\mathfrak{ g}}}
\redefine\g{\mathfrak{ g}}
\define\Gh{\widehat{\mathfrak{ g}}}
\define\gh{\widehat{\mathfrak{ g}}}
%%%%%%%%%%%%%%%%%%%%%%%%%%
\define\Ah{\widehat{\Cal A}}
\define\Lh{\widehat{\Cal L}}
\define\Ugh{\Cal U(\Gh)}
\define\Xh{\hat X}
\define\Tld{...}
\define\iN{i=1,\ldots,N}
\define\iNi{i=1,\ldots,N,\infty}
\define\pN{p=1,\ldots,N}
\define\pNi{p=1,\ldots,N,\infty}
\define\de{\delta}

\define\kndual#1#2{\langle #1,#2\rangle}
\define \nord #1{:\mkern-5mu{#1}\mkern-5mu:}
%\define \MgN{{\Cal M}_{g,N}} %% moduli space
%\define \MgNp{{\Cal M}_{g,N}^{(p)}} %% moduli space
%\define \MgNe{{\Cal M}_{g,N+1}} %% moduli space
%\define \mpt{(M,P_1,P_2,\ldots, P_N,\Pif)} %% moduli point
%\define \mpp{(M,P_1,P_2,\ldots, P_N)} %% moduli point
%\define \MgNn{{\Cal M}_{g,N}^{(1)}} %% moduli space
%\define \MgNen{{\Cal M}_{g,N+1}^{(1)}} %% moduli space
%\define \Mgo{{\Cal M}_{g,0}} %% moduli space
%\define \mptn{(M,P_1,P_2,\ldots, P_N,\Pif,z_1,\ldots,z_N,z_\infty)}
 %% moduli point
%\define \mppn{(M,P_1,P_2,\ldots, P_N,z_1,\ldots,z_N)} %% moduli point
\define \sinf{{\widehat{\sigma}}_\infty}
\define\Wt{\widetilde{W}}
\define\St{\widetilde{S}}
\define\Wn{W^{(1)}}
\define\Wtn{\widetilde{W}^{(1)}}
\define\btn{\tilde b^{(1)}}
\define\bt{\tilde b}
\define\bn{b^{(1)}}
%
%%%%%%%%%% Olegs definitions %%%%%%%%%%%%%%%%%%%%%%%%%%%%%%%%%%%
\define\eps{\varepsilon}    % Oleg
\define\doint{({\frac 1{2\pi\i}})^2\oint\limits _{C_0}
       \oint\limits _{C_0}}                            % Oleg
\define\noint{ {\frac 1{2\pi\i}} \oint}   % Oleg
\define \fh{{\frak h}}     % Oleg
\define \fg{{\frak g}}     % Oleg
\define \GKN{{\Cal G}}   % affine Krichever-Novikov algebra % Oleg
\define \gaff{{\hat\frak g}}   % affine Krichever-Novikov algebra
\define\V{\Cal V}
\define \ms{{\Cal M}_{g,N}} %% moduli space
\define \mse{{\Cal M}_{g,N+1}} %% moduli space
%%%%%%%%%%%%%%%%%%%%%%%%%%%%%%%%%%%%%%
\define \tOmega{\Tilde\Omega}
\define \tw{\Tilde\omega}
\define \hw{\hat\omega}
\define \s{\sigma}
\define \car{{\frak h}}    % Cartan subalgebra
\define \bor{{\frak b}}    % Borel subalgebra
\define \nil{{\frak n}}    % nilpotent subalgebra
\define \vp{{\varphi}}
\define\bh{\widehat{\frak b}}  % Borel subalgebra of KN algebra
\define\bb{\overline{\frak b}}  % Borel subalgebra of KN algebra
\define\Vh{\widehat V}
\define\KZ{Knizhnik-Zamolodchikov}
\define\ai{{\alpha(i)}}
\define\ak{{\alpha(k)}}
\define\aj{{\alpha(j)}}
%%%%%%%%%%%%%%%%%%%%%%%%%%%%%%%%%%%%%%%%%%%
%%%%%%%%%%%%%%%%%%%%%%%%%%%%%%%%%
%%%%%%%%%%%%%%   for laxcent
%%%%%%%%%%%%%%%%%%%%%%%%%%%%%%%%%%
\newcommand{\laxgl}{\overline{\mathfrak{gl}}}
\newcommand{\laxsl}{\overline{\mathfrak{sl}}}
\newcommand{\laxso}{\overline{\mathfrak{so}}}
\newcommand{\laxsp}{\overline{\mathfrak{sp}}}
\newcommand{\laxs}{\overline{\mathfrak{s}}}
\newcommand{\laxg}{\overline{\frak g}}
\newcommand{\bgl}{\laxgl(n)}
%%%%%%%%%%%%%%%%%%%%%%%%
\newcommand{\tX}{\widetilde{X}}
\newcommand{\tY}{\widetilde{Y}}
\newcommand{\tZ}{\widetilde{Z}}
%%%%%%%%%%%%%%%%%%%%%%%%%%%%%%%%%%%%%%%%%%
%%%%%%%%%%%%  END of macrodefinitions
%%%%%%%%%%%%%%%%%%%%%%%%%%%%%%%%%%%%%%%%%

%%%%%%%%%%%%%%%%%%%%%%%%%%%%%%%%%
%Top-Matter
%%%%%%%%%%%%%%%%%%%%%%%%%%%%%%%
%%%%%%%%%%%%%%%%%    private header  %%%%%%%%%%%%%%%%%%%%
\vspace*{-1cm}
%
%\hspace*{\fill} Steklov preprints xxx
%
%
\vspace*{2cm}
%%%%%%%%%%%%%%%%% end private header  %%%%%%%%%%%%%%%%%%%%

\title[Hamiltonian integrable hierarchies]
{Lax operator algebras and Hamiltonian integrable hierarchies}
\author[O.K. Sheinman]{Oleg K. Sheinman}
\thanks{Supported in part by the RFBR project 08-01-00054-a and
by the program "Mathematical methods of Nonlinear Dynamics" of the
Russian Academy of Science}
\address[ Oleg K. Sheinman]{Steklov Mathematical Institute, ul. Gubkina, 8,
Moscow 119991, Russia, and Independent University of Moscow,
Bolshoi Vlasievskii per. 11, Moscow, Russia}
\email{sheinman@mi.ras.ru}

%\dedicatory{}

\begin{abstract}
We consider the theory of Lax equations in complex simple and
reductive classical Lie algebras with the spectral parameter on a
Riemann surface of finite genus. Our approach is based on the new
objects --- the Lax operator algebras, and develops the approach
of I.Krichever treating the $\gl(n)$ case. For every Lax operator
considered as the mapping sending a point of the cotangent bundle
on the space of extended Tyrin data to an element of the
corresponding Lax operator algebra we construct the hierarchy of
mutually commuting flows given by Lax equations and prove that
those are Hamiltonian with respect to the Krichever-Phong
symplectic structure. The corresponding Hamiltonians give
integrable finite-dimensional Hitchin-type systems. For example we
derive elliptic $A_n$, $C_n$, $D_n$ Calogero-Moser systems in
frame of our approach.
\end{abstract} \subjclass{17B66,
17B67, 14H10, 14H15, 14H55,  30F30, 81R10, 81T40} \keywords{
Infinite-dimensional Lie algebras, current algebras, gauge
algebra, conformal algebra, central extensions}
%\date{\today}
\maketitle
%\newpage
%%%%%%%%%%%%%%%%%%%%%%%%%%%%%%%%%%%%%
\tableofcontents
%%%%%%%%%%%%%%%%%%%%%%%
% section 1
\section{Introduction}\label{S:intro}
%\input intro.tex

%%%%%%%%%%%%%%%%%%%%%%%%%%%%%%%%%%%%%%%%
%%%%%%%%%%%%%%%%%   Introduction
%%%%%%%%%%%%%%%%%%%%%%%%%%%%%%%%%%%%%%%%
In \cite{rKNU,KNfaa} I.M.Krichever and S.P.Novikov proposed the
technique of finding high rank finite-zone solutions to
Kadomtsev-Petviashvili and Shr\"{o}dinger equations. Based on the
ideas of those works and on his results on effective
classification of high rank pairs of commuting differential
operators \cite{KrComm} I.M.Krichever proposed the theory of Lax
operators with the spectral parameter on a Riemann surface
\cite{Klax}. In \cite{KSlax} I.M.Kricher and the author found that
these Lax operators form an associative algebra, and constructed
their orthogonal and symplectic analogs which form Lie algebras.
They were called {\it Lax operator algebras}. Lax operator
algebras form a new class of one-dimensional current algebras.

The applications of current algebras to the theory of Lax
equations have a long history. They are initiated in the works of
I.Gelfand, L.Dikii, I.Dorfman, A.Reyman, M.Semenov-Tian-Shanskii,
V.Drinfeld, V.Sokolov, V.Kac, P. van Moerbeke. Basically these
applications are related to Kac-Moody algebras which appear quite
naturally in the context of Lax equations with {\it rational}
spectral parameter. In \cite{Klax} the theory of conventional Lax
and zero curvature representations with rational spectral
parameter was generalized to the case of algebraic curves $\Sigma$
of arbitrary genus $g$. Such representations arise in several ways
in the theory of integrable systems, c.f. \cite{rKNU} where a zero
curvature representation of the Krichever-Novikov equation is
introduced, or \cite{Klax} where a field analog of the
Calogero-Moser system on an elliptic curve is presented. Lax
operator algebras appear as a corresponding generalization of
Kac-Moody algebras.

In \cite{ShN70AMS,Sh_lopa} we have posed the problem of
generalization of the Krichever theory \cite{Klax} to all Lax
operator algebras, and have done the first steps in that direction
including the construction of integrable hierarchies of Lax
equations. In the present article we undertake the concluding
step: prove that those Lax equations are Hamiltonian and construct
the corresponding Hamiltonians. We consider certain wellknown
examples (the elliptic Calogero-Moser systems) in the context of
our approach.

The concept of Lax operators on algebraic curves is closely
related to A.Tyurin results on the classification of holomorphic
vector bundles on algebraic curves \cite{Tyvb}. It uses {\it
Tyurin data} modelled on {\it Tyurin parameters} of such bundles.
Turin data consist of points $\ga_s\in\Sigma$ ($s=1,\ldots ,ng$),
and associated elements $\a_s\in\C P^n$ where $g$ denotes the
genus of the Riemann surface $\Sigma$, and $n$ corresponds to the
rank of the bundle. Every Lax operator algebra is associated with
fixed Tyurin data and a set of marked points on $\Sigma$. For a
finite-dimensional simple or reductive Lie algebra $\g$ over $\C$
we denote this algebra by~$\gb$. It is an almost graded Lie
algebra. Its elements are parameterized by cotangent vectors to
the space of Tyurin data at the point. Those elements are defined
in \cite{Klax,KSlax} as meromorphic $(n\times n)$ matrix-valued
functions on $\Sigma$ having arbitrary poles at the points $P_k$,
$k=1,\ldots,N$ (which are assumed to be fixed), and poles of the
order at most two at $\gamma_s$'s. The coefficients of the Laurent
expansion of those matrix-valued functions in the neighborhood of
a point $\gamma_s$ have to obey certain constraints parametrized
by $\alpha_s$ (relations \refE{triv} below). In the case of
absence of the points $\ga_s$ (which corresponds to trivial vector
bundles) we return to the known class of Krichever-Novikov
algebras (see \cite{ShN65} for a review). If, in addition, the
genus of $\Sigma$ is equal to $0$, $\sharp\{P_i\}=2$, and these
two points are $0$ and $\infty$ we obtain the loop algebras. It
may be mentioned here that all these three types of current
algebras have quite similar theory of central extensions
\cite{KSlax,SSlax}

By Lax operator we mean the mapping sending a point of the
cotangent bundle on the space of Tyrin data (extended by marked
points) to an element of the corresponding Lax operator algebra.
With every positive divisor $D=\sum m_iP_i$ we associate the
subspace $\L^D$ of the cotangent bundle such that the Lax
operator, as a function on $\Sigma$, has a pole of order at most
$m_i$ at $P_i$ for every fixed point in that subspace. We consider
a certain dynamical system on $\L^D$ given by the Lax equation. We
prove that these dynamical systems are Hamiltonian and integrable
with respect to the Krichever-Phong symplectic structure.

In \refS{Moper} following \cite{Klax,Sh_lopa} we introduce {\it
$M$-operators} (partners of Lax operators in Lax pairs) and study
their analytic and algebraic properties. Every $M$-operator
defines a flow in the cotangent bundle on the space of extended
Tyurin data. The corresponding dynamics of the Tyurin parameters
\refE{mdata} plays a fundamental role. It appeared in \cite{rKNU}
as an ansatz for effective integration of KP equation in the class
of high rank solutions, and is hardly used in \cite{Klax}.

In \refS{algebras} we give a brief survey of Lax operator
algebras. Following \cite{Klax,Sh_lopa} we introduce their
elements ({\it $L$-operators}) as $M$-operators yielding a trivial
dynamics of Tyurin data. There is also an independent definition
of $L$-operators given in \cite{KSlax}. We refer to that article
for more detail.

In \refS{Lax} we prove that a Lax equation is equal to the system
of equations on the main parts of the Lax operator at the points
$P_i$, and of the above mentioned dynamics equations of Tyurin
data. We prove a criteria of well-definiteness of the dynamical
system given by a Lax equation on a space $\L^D$.

In \refS{Hierarch} we construct hierarchies of commuting flows on
the cotangent bundle on $\L^D$ given by a Lax operator. A key role
is played by the \refL{dimND} which treats the dimension of the
space of $M$-operators satisfying the criteria of \refS{Lax}.
Modulo that lemma we follow the lines of \cite{Klax}. We prove the
final result for all classical Lie algebras in question, in
particular for $\spn(2n)$ which was not done in the earlier
authors article \cite{Sh_lopa} on hierarchies. We would like to
stress here that in general $L$ and $M$ do not belong to the same
Lie algebra and give the list of the corresponding pairs of Lie
algebras \refE{corresp}. Observe that the method of constructing a
hierarchy given in this section is not unique. In \refS{CMoser} we
give another one for elliptic curves enabling us to incorporate
certain Calogero-Moser systems in frame of our approach.

In Sections \ref{S:Sympl},\ref{S:Hamilton} we develop a
Hamiltonian theory for the Lax equations in question. In
\refS{Sympl} we construct the analog of the Krichever-Phong
symplectic structure on the cotangent bundle of a certain subspace
$\P^D\subset\L^D/G$ (where $\g=Lie\, G$) invariant with respect to
the flows of our commuting hierarchy. In \refS{Hamilton} we prove
that the hierarchies of \refS{Hierarch} are Hamiltonian with
respect to that structure and construct the corresponding
Hamiltonians. Again, the presentation is similar for all classical
Lie algebras in question, and follows the lines of \cite{Klax}
except for the proofs of non-degenerateness of the Krichever-Phong
symplectic form (\refS{Sympl}) and of holomorphy of spectra of
operators in Lax pairs (\refS{Hamilton}). The latter is done for
every class of Lie algebras individually. We also give the proofs
of the statements in \cite{Klax} in slightly more detail.

In \refS{CMoser} we consider three examples of integrable systems
in frame of our approach. The first one is the elliptic
Calogero-Moser system for $\g=\gl(n)$. It is considered earlier in
\cite{Klax,Kr_ellCM}. Further on, by modifying the technique of
\refS{Hierarch} we obtain the Lax operator and the corresponding
Hamiltonian integrable hierarchy for the elliptic Calogero-Moser
$\so(2n)$ and $\spn(2n)$ systems. Unfortunately we don't know any
explicit form of Lax operators for $\so(2n+1)$ systems in frame of
our approach. In particular, such Lax operators must be
meromorphic in the spectral parameter. Observe that the Lax
representations of those systems with the Lax operators of
Backer-Akhieser type are known \cite{dHPh}.

Author thanks I.Krichever and M.Schlichenmaier for numerous
fruitful discussions. I am also thankful to the audience of my
lecture course in the Independent University of Moscow and to the
members of A.G.Sergeev's seminar in the Steklov mathematical
institute for valuable remarks.
%%%%%%%%%%%%%%%%%%%%%%%%%%%%%%%%%%%%%%%%
%%%%%%%%%%%%%%%%%%%%%%%%%%%%%%%%%%%%%%%%

\section{$M$-operators and times}\label{S:Moper}
$ $

Let $\Sigma$ be a compact Riemann surface of genus $g$ with two
marked points $P_+,P_-$, and $\g$ be a Lie algebra over $\C$ from
the following list: $\gl(n)$, $\sln(n)$, $\so(2n)$, $\so(2n+1)$,
$\spn(2n)$, $\sn(n)$, $\tsp$ where $\sn(n)$ is the algebra of
scalar matrices, and $\tsp$ is a Lie subalgebra of $\spn(2n+2)$
consisting of the matrices having zero first column and last row.
This is the list of the previous work \cite{Sh_lopa} extended with
$\tsp$ which appears in the construction of integrable hierarchies
in the symplectic case.

Let us fix $K$ additional  points $\ga_s\in\Sigma$, and let
\begin{equation}
W:=\{\ga_s\in\Sigma\setminus\{P_+,P_-\}\mid s=1,\ldots, K\}
\end{equation}
($K$ will be specified in \refS{Hierarch}). To every point $\ga_s$
we assign a vector $\a_s\in\C^p$ given up to a scalar factor where
$p$ is the dimension of the standard (vector) representation of
the corresponding $\g$ (i.e. the elements of $\g$ are $p\times
p$-matrices). The system
\begin{equation}
T:=\{(\ga_s,\a_s)\mid s=1,\ldots, K\}
\end{equation}
is called {\it Tyurin data} below. This data is related to the
moduli of holomorphic vector bundles over $\Sigma$. In particular,
for generic values of $(\ga_s,\a_s)$ with $\a_s\ne 0$ and $K=ng$
Tyurin data parameterize the semistable rank $n$ degree $n g$
framed holomorphic vector bundles over $\Sigma$, see \cite{Tyvb}.

Let $M:\,\Sigma\to\g$ be a meromorphic function. We require that
at a point $\ga=\ga_s$ $M$ has the expansion
\begin{equation}\label{E:expan}
  M=\frac{M_{-2}}{(z-z_\ga)^2}+\frac{M_{-1}}{z-z_\ga}+M_0+\ldots\ ,
\end{equation}
where $z$ is a fixed local coordinate in the neighborhood of
$\ga$, $z_\ga$ is the coordinate of $\ga$ itself,
$M_{-2},M_{-1},M_0,M_1,\ldots\in\g$ and
\begin{equation}\label{E:res1}
    M_{-2}=\l\a\a^t\s,\ \ \ \ \
  M_{-1}=(\a\mu^t+\eps\mu\a^t)\s
\end{equation}
where $\l\in\C$, $\mu\in\C^n$, $\s$ is a $n\times n$ matrix, the
upper $t$ denotes the matrix transposition
\begin{equation}\label{E:laeps}
  \begin{aligned}
     \l\equiv 0,\ &\eps=0,\ \ \  \s=id\ \ \text{for}\ \g=\gl(n),\sln(n),\\
     \l\equiv 0,\ &\eps=-1,\ \s=id\ \ \text{for}\ \g=\so(n), \\
                  &\eps=1\ \ \ \ \ \ \ \ \ \ \ \ \ \ \
                  \text{for}\ \g=\spn(2n),
  \end{aligned}
\end{equation}
and $\s$ is a matrix of the symplectic form for $\g=\spn(2n)$. In
the case $\g=\tsp$ we require that $\a$ and $\mu$ were of the form
$\a=(\a_0,\tilde\a^t,0)^t$ ($\a_0\in\C$, $\tilde\a\in\C^{2n}$),
$\mu=(\mu_0,\tilde\mu^t,0)^t$ ($\mu_0\in\C$,
$\tilde\mu\in\C^{2n}$), and $\s=
  \left(\begin{smallmatrix}
    0 & 0 & 1 \\
    0 & \boxed{\tilde\s} & 0   \\
    -1 & 0        & 0
  \end{smallmatrix}\right)
$
where $\tilde\s$ is the matrix of the symplectic form
corresponding to $\spn(2n)$. For such $\s$ the elements of $\tsp$
are of the form
$
  X=\left(\begin{smallmatrix}
    0 & u^t & c \\
    \begin{smallmatrix} 0_{_{_{_{}}}}\end{smallmatrix} & \boxed{\tilde X} & \begin{smallmatrix} v_{_{_{_{}}}} \end{smallmatrix}  \\
    0 & 0        & 0
  \end{smallmatrix}\right)
$ where ${\tilde X}\in\spn(2n)$, $u^t=(a^t,b^t)$,
$v^t=(b^t,-a^t)$, and $a,b\in\C^n$. In other words, $\tsp$ is the
semidirect sum of the $\spn(2n)$ and the Heisenberg
algebra\label{Heis}. Here and below we omit the subscripts $s,\ga$
indicating the point $\ga$ except for $z_\ga$. In the cases
$\g=\spn(2n)$, $\g=\tsp$ we require that
\begin{equation}\label{E:add2m}
  \a^t\s M_1\a=0.
\end{equation}
Under above requirements we call $M$ a {\it $\g$-valued
$M$-operator}. In \cite{Sh_lopa} we did not require \refE{add2m}
for $M$-operators doing that for $L$-operators only (see
\refS{algebras}).

Every $M$-operator and a complex number $\k$ define a dynamical
system on the space of Tyurin data:
\begin{equation}\label{E:mdata}
{\dot z_\ga}=-\mu^t\s\a, \ \ {\dot\a}=-M_0\a+\k\a ,
\end{equation}
where the upper dot means the time derivative, $\k\in\C$. We
comment on these equations in \refL{mov-eq} and subsequent
remarks.

\begin{lemma}\label{L:algM} For any two $M$-operators $M_a$, $M_b$
and corresponding times the expression
\[ M_{ab}= \partial_aM_b-\partial_bM_a+[M_a,M_b]
\]
is an $M$-operator too.
\end{lemma}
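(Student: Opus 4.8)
The plan is to show that the class of $M$-operators is closed under the curvature bracket $(M_a,M_b)\mapsto M_{ab}$ by checking the local conditions \refE{expan}--\refE{add2m} directly at each Tyurin point $\ga=\ga_s$. Away from the $\ga_s$ and from $P_\pm$ all three summands are holomorphic, at $P_\pm$ no condition is imposed, and $M_{ab}$ is $\g$-valued: the commutator lands in $\g$ because $\g$ is a matrix Lie algebra, while $\partial_aM_b$ and $\partial_bM_a$ stay in $\g$ because the flow \refE{mdata} moves the Tyurin data inside its own space (tracelessness is kept for $\sln(n)$ since $\tr M_a\equiv\tr M_b\equiv0$, and the subalgebra $\tsp$ is preserved likewise). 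Hence only the principal part at each $\ga$ must be controlled. I would fix $\ga$, set $w=z-z_\ga$, and expand $M_a=\sum_{k\ge-2}A_kw^k$, $M_b=\sum_{k\ge-2}B_kw^k$ with $A_{-2}=\l_a\a\a^t\s$, $A_{-1}=(\a\mu_a^t+\eps\mu_a\a^t)\s$ and the analogous formulas for $B$, as dictated by \refE{res1}.

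First I would bound the pole order by two. Since $\partial_a$ differentiates $w^{-k}$ through $\partial_a w=-\partial_a z_\ga$, it turns the double pole of $M_b$ into a spurious $2(\partial_a z_\ga)B_{-2}w^{-3}$, and similarly for $M_a$. The $w^{-4}$ coefficient of $M_{ab}$ is $[A_{-2},B_{-2}]$, which vanishes because $A_{-2}$ and $B_{-2}$ are both proportional to the single matrix $\a\a^t\s$. The $w^{-3}$ coefficient is $2(\partial_a z_\ga)B_{-2}-2(\partial_b z_\ga)A_{-2}+[A_{-2},B_{-1}]+[A_{-1},B_{-2}]$; inserting $\partial_a z_\ga=-\mu_a^t\s\a$ from \refE{mdata} and using \refE{res1} together with $\a^t\s\a=0$ (automatic when $\s$ is skew, the isotropy of the Tyurin vector otherwise), I would check that these four contributions cancel. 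This is exactly the point at which the $z_\ga$-dynamics of \refE{mdata} is forced upon us.

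Next I would match the residue coefficients to the shapes in \refE{res1}. For the double-pole coefficient the decisive simplification is that in $\partial_aB_{-2}+[A_0,B_{-2}]$ the terms containing $A_0\a\a^t\s$ cancel between the $\partial_a\a$-part of $\partial_a(\l_b\a\a^t\s)$, computed from $\partial_a\a=-A_0\a+\k_a\a$, and the commutator, while the leftover $\a\a^t(A_0^t\s+\s A_0)$ vanishes because $A_0\in\g$ preserves $\s$; combined with the piece obtained by exchanging $a\leftrightarrow b$ and the $w^{-2}$-contribution of $A_{-1},B_{-1}$, this gives $(M_{ab})_{-2}=\l_{ab}\,\a\a^t\s$ (with $\l_{ab}=0$ whenever $\l\equiv0$, using isotropy for $\so(n)$). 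For the simple-pole coefficient I would group $\partial_aB_{-1}+[A_0,B_{-1}]$, use $\partial_a\a=-A_0\a+\k_a\a$ to kill the term $A_0\a\mu_b^t$ against the commutator, and then either $\eps=0$ (for $\gl(n),\sln(n)$) or $A_0^t\s=-\s A_0$ (for $\so,\spn,\tsp$) to cast the remainder into the form $(\a\nu^t+\eps\nu\a^t)\s$; adding the contribution of $M_a$ and the pieces $[A_{-2},B_1]+[A_1,B_{-2}]$ (which reduce to the same shape once $B_1^t\s=-\s B_1$ is used) yields $(M_{ab})_{-1}=(\a\mu_{ab}^t+\eps\mu_{ab}\a^t)\s$, with $\mu_{ab}$ given explicitly through $\mu_a,\mu_b,A_0,B_0,A_1,B_1$ and the times. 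This is \refE{res1} for $M_{ab}$.

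Finally, for $\g=\spn(2n)$ and $\g=\tsp$ I would verify the extra requirement \refE{add2m}, namely $\a^t\s(M_{ab})_1\a=0$, by extracting the $w^1$ coefficient and reducing it, through the two equations of \refE{mdata} and the $\s$-invariance of elements of $\g$, to the standing hypotheses $\a^t\s(M_a)_1\a=0$ and $\a^t\s(M_b)_1\a=0$; for $\tsp$ one checks in addition that $\a$ and $\mu_{ab}$ keep the prescribed block form, which follows from membership of $M_{ab}$ in the subalgebra. I expect the only real difficulty to be the bookkeeping in the residue computation of the previous paragraph --- specifically, arranging that every occurrence of $A_0\a$ produced by the commutators is matched by the $\partial_a\a$-term of \refE{mdata} so that the whole principal part collapses onto the templates $\a\a^t\s$ and $(\a\mu^t+\eps\mu\a^t)\s$ --- rather than anything conceptual; among the individual checks only \refE{add2m} (and the $\tsp$ block conditions) goes genuinely beyond the $\gl(n)$ pattern of \cite{Klax}.
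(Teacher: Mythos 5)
Your proposal is correct and takes essentially the same route as the paper's own proof: Laurent expansion at each Tyurin point, coefficient-by-coefficient matching against \refE{res1} using the dynamics \refE{mdata} and the relation $X^t\s=-\s X$ for $X\in\g$, and verification of \refE{add2m} by differentiating the standing constraints $\a^t\s M_{1a}\a=\a^t\s M_{1b}\a=0$ along the flows (the paper's \refE{comp1}--\refE{comp2}), with the $\tsp$ block condition handled at the end. One small correction that does not affect the argument: in the $\gl(n)$, $\sln(n)$ and $\so(n)$ cases the unwanted terms die because $\l\equiv0$ (and, for the order $-2$ coefficient, because $\s$ symmetric gives $\mu_a^t\s\mu_b-\mu_b^t\s\mu_a=0$), not because of isotropy of $\a$, which the paper imposes only for $L$-operators in the orthogonal case.
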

For all Lie algebras from our list the lemma is proved in
\cite{Sh_lopa}, except for $\g=\spn(2n),\tsp$. We reproduce the
proof of that work in order to show the place for additional
arguments needed in the case $\g=\tsp$. We prove also that
$M_{ab}$ satisfies the relation \refE{add2m} which was not needed
in the setting of \cite{Sh_lopa}.
\begin{proof}

Let us verify that $M_{ab}$ satisfies \refE{res1}.

For an arbitrary $\g$ from our list we have
\[
  M_a=\frac{\l_a\a\a^t\s}{(z-z_\ga)^2}+\frac{(\a\mu_a^t+\eps\mu_a\a^t)\s}{z-z_\ga}+M_{0a}+\ldots\ .
\]
and similar expression for $M_b$ where $\l_a, \l_b$, $\eps$ and
$\s=id$ are subjected to \refE{laeps}. Next we have
\begin{equation}\label{E:damb}
 \begin{aligned}
 \partial_aM_b&=2(\partial_az_\ga)\frac{\l_b\a\a^t\s}{(z-z_\ga)^3}+
 \frac{((\partial_a\l_b)\a\a^t+\l_b\partial_a(\a\a^t))\s
 +(\partial_az_\ga)M_{-1,b}}{(z-z_\ga)^2}+ \\
 &+\frac{((\partial_a\a)\mu_b^t+\eps\mu_b(\partial_a\a^t)+\a(\partial_a\mu_b^t)
 +\eps(\partial_a\mu_b)\a^t)\s}{z-z_\ga}+
 \ldots
 \end{aligned}
\end{equation}
and similar expression for $\partial_bM_a$.

For the commutator we have
\begin{equation}\label{E:mcomm}
 \begin{aligned}
   &[M_a,M_b]=\frac{(1+\eps^2)(\l_b\cdot\mu_a^t\s\a-\l_a\cdot\mu_b^t\s\a)\a\a^t\s}{(z-z_\ga)^3}\\
      &+\frac{(\l_a\partial_b-\l_b\partial_a)\a\a^t\s+\l_{ab}\a\a^t\s
      +(\mu_a^t\s\a)M_{-1,b}-(\mu_b^t\s\a)M_{-1,a}}{(z-z_\ga)^2}+\\
      &+\frac{((\partial_b\a)\mu_a^t+\eps\mu_a(\partial_b\a^t))\s-
        ((\partial_a\a)\mu_b^t+\eps\mu_b(\partial_a\a^t))\s}
            {z-z_\ga}\\
            &+\frac{(\a\mu_{ab}^t+\eps\mu_{ab}\a^t)\s}{z-z_\ga}+\ldots
 \end{aligned}
\end{equation}
where
$\l_{ab}=2\l_b\k_a-2\l_a\k_b+\eps(\mu_a^t\s\mu_b-\mu_b^t\s\mu_a)$,
$\mu_{ab}=\k_a\mu_b-\k_b\mu_a-\l_aM_{1b}\a+\l_bM_{1a}\a-M_{0b}\mu_a+M_{0a}\mu_b$.
To obtain this relation we used the equations \refE{mdata} and
some additional relations, in particular
$\eps\a^t\s\mu=-\eps^2\mu^t\s\a$ and $\l\a^t\s\a=0$ which are
fulfilled in all cases. In the computation of $[M_a,M_b]_{-2}$ we
also used the relation
\[
  [M_{-1,a},M_{-1,b}]=(\mu_a^t\s\a)M_{-1,b}-(\mu_b^t\s\a)M_{-1,a}+
  \eps(\mu_a^t\s\mu_b-\mu_b^t\s\mu_a)\a\a^t\s
\]
which can be verified using \refE{res1}. To obtain
$[M_a,M_b]_{-1}$ in the form \refE{mcomm} it is heavily used that
$M_{i,a}^t=-\s M_{i,a}\s^{-1}$ for $\eps\ne 0$ (which follows from
\refE{laeps}), and the same for~$M_{i,b}$.

In the case $\g=\tsp$ we need to prove that $\mu_{ab}$ has a zero
last coordinate. This is obviously the case because the matrices
$M_{1a}$, $M_{1b}$, $M_{0a}$, $M_{0b}$ have zero last rows.

Comparing \refE{mcomm} and \refE{damb} (and the corresponding
relation for $\partial_bM_a$) and using \refE{mdata} we obtain
\[
  M_{ab}=\frac{{\tilde\l_{ab}}\a\a^t\s}{(z-z_\ga)^2}
  +\frac{\left(\a{\tilde\mu_{ab}^t}
 +\eps{\tilde\mu_{ab}}\a^t\right)\s}{z-z_\ga}+\ldots
\]
where $\tilde\l_{ab}=\partial_a\l_b-\partial_b\l_a+\l_{ab}$,
$\tilde\mu_{ab}=\partial_a\mu_b-\partial_b\mu_a+\mu_{ab}$. We
observe that $M_{ab}$ has the form \refE{expan}, \refE{res1}. In
particular the $-3$ order term vanishes because either
$\l_a=\l_b=0$ or $\eps^2=1$ (which follows from \refE{laeps}).

Let us prove that $M_{ab}$ satisfies \refE{add2m} in the
symplectic case. We have
\begin{equation}\label{E:comp1}
(\partial_aM_b)_1=\partial_a M_{1b}-2(\partial_az_\ga)M_{2b},\
(\partial_bM_a)_1=\partial_b M_{1a}-2(\partial_bz_\ga)M_{2a}.
\end{equation}
Applying $\partial_a$ to both parts of the relation $\a^t\s
M_{1,b}\a=0$, and $\partial_b$ to the corresponding relation for
$M_{1a}$, and using \refE{mdata} we obtain
\begin{equation}\label{E:comp2}
\a^t\s(\partial_a M_{b1})\a=-\a^t\s[M_{0a},M_{1b}]\a,\
   \a^t\s(\partial_b M_{a1})\a=-\a^t\s[M_{0b},M_{1a}]\a .
\end{equation}
Further on, we have
  \begin{align*}
   [M_a,M_b]_1 &= \l_a[\a\a^t\s,M_{3b}]+[(\a\mu_a^t+\mu_a\a^t)\s
   ,M_{2b}]+[M_{0a},M_{1b}] \\
   &+ [M_{1a},M_{0b}]+
   [M_{a2},(\a\mu_b^t+\mu_b\a^t)\s]+\l_b[M_{3a},\a\a^t\s].
\end{align*}
After taking $\a^t\s [M_a,M_b]_1\a$ the first and the last
commutators in the last relation vanish by $\a^t\s\a=0$ (as well
as some terms of the second and the fifth commutators). The two
commutators in the middle annihilate with the commutators on the
right hand sides of \refE{comp2}. The remaining terms of the
second and the fifth commutators give
\begin{align*}
  &\a^t\s(\mu_a\a^t\s
  M_{2b}-M_{2b}\a\mu_a^t\s+M_{2a}\a\mu_b^t\s-\mu_b\a^t\s M_{2a})\a=\\
  &=2\a^t\s\mu_a(\a^t\s M_{2b}\a)-2\a^t\s\mu_b(\a^t\s M_{2a}\a)
\end{align*}
which annihilate with the corresponding terms coming from
\refE{comp1} due to \refE{mdata}.
\end{proof}

%%%%%%%%%%%%%%%%%%%%%%%%%%%%%%%%%%%%%%%%
\section{$L$-operators and Lax operator algebras}\label{S:algebras}
%\input algebras.tex

%%%%%%%%%%%%%%%%%%%%%%%%%%%%%%%%%%%%%%%%%%%%%%%%%%%%%%%
%%%%%%%%%%%%%%%%%%%%%%%%%%%%%%%%%%%%%%%%%%%%%%%%%%%%%%%
%%%%%%%%%%%%%%%      algebras.tex
%%%%%%%%%%%%%%%%%%%%%%%%%%%%%%%%%%%%%%%%%%%%%%%%%%%%%%%

We define $L$-operators as $M$-operators yielding trivial dynamics
by \refE{mdata}. Thus, by definition, every $L$-operator $L$ is a
meromorphic $\g$-valued function on $\Sigma$ holomorphic outside
$W\cup \{P_+, P_-\}$ such that at a point $\ga=\ga_s$
\begin{equation}\label{E:lexpan}
  L=\frac{L_{-2}}{(z-z_\ga)^2}+\frac{L_{-1}}{z-z_\ga}+L_0+\ldots\ ,
\end{equation}
where $z$ is a fixed local coordinate in the neighborhood of
$\ga$, $z_\ga$ is the coordinate of $\ga$ itself,
$L_{-2},L_{-1},L_0,L_1,\ldots\in\g$ and
\begin{equation}\label{E:lres1}
    L_{-2}=\nu\a\a^t\s,\ \ \ \ \
  L_{-1}=(\a\b^t+\eps\b\a^t)\s
\end{equation}
where $\nu\in\C$, $\b\in\C^n$, $\s$ is a $n\times n$ matrix,
\begin{equation}\label{E:llaeps}
  \begin{aligned}
     \nu\equiv 0,\ &\eps=0,\ \ \  \s=id\ \ \text{for}\ \g=\gl(n),\sln(n),\\
     \nu\equiv 0,\ &\eps=-1,\ \s=id\ \ \text{for}\ \g=\so(n), \\
                  &\eps=1\ \ \ \ \ \ \ \ \ \ \ \ \ \ \
                  \text{for}\ \g=\spn(2n),
  \end{aligned}
\end{equation}
and $\s$ is a matrix of the symplectic form for $\g=\spn(2n)$. The
case $\g=\tsp$ is not considered in this section; nevertheless,
such consideration is possible.

Further on, the requirement of triviality of the dynamics
\refE{mdata} writes as
\begin{equation}\label{E:triv}
\b^t\s\a=0, \ \ L_0\a=\k\a .
\end{equation}
In addition we assume that
\begin{equation}\label{E:add1}
\a^t\a=0\ \ \text{for}\ \ \g=\so(n)
\end{equation}
and
\begin{equation}\label{E:add2} \a^t\s L_1\a=0\ \ \text{for}\ \
\g=\spn(2n).
\end{equation}
\begin{theorem}\label{T:struct}
The space $\gb$ of L-operators  is a Lie algebra under the
point-wise matrix commutator. For $\g=\gl(n)$ it is an associative
algebra under the point-wise matrix multiplication as well.
\end{theorem}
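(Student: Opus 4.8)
The plan is to verify that the defining data of an $L$-operator is preserved under the point-wise commutator, and, for $\g=\gl(n)$, under the point-wise matrix product. Away from $W\cup\{P_+,P_-\}$ both $L_a$ and $L_b$ are holomorphic and $\g$-valued, so $[L_a,L_b]$ (resp.\ $L_aL_b$) is holomorphic there and again takes values in $\g$, since $\g$ is closed under the commutator (resp.\ is an associative matrix algebra). Arbitrary poles are allowed at $P_\pm$, so nothing has to be checked there, and for $\g=\sn(n)$ the algebra is abelian and the claim is trivial. Thus everything reduces to a local analysis at each Tyurin point $\ga=\ga_s$: one must show that the Laurent expansion \refE{lexpan} of the product has pole order at most two with coefficients $L_{-2},L_{-1}$ of the shapes \refE{lres1}, that the triviality conditions \refE{triv} hold, and that the supplementary conditions \refE{add1}, \refE{add2} are propagated.

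For the pole structure I would lean on \refL{algM}. By definition an $L$-operator is an $M$-operator whose associated dynamics \refE{mdata} is trivial, so the flow generated by $L_a$ fixes the entire Tyurin data; since an $L$-operator depends on the time only through the Tyurin data, the derivatives along this flow vanish, $\partial_aL_b=\partial_bL_a=0$. Hence $L_{ab}:=\partial_aL_b-\partial_bL_a+[L_a,L_b]=[L_a,L_b]$, and by \refL{algM} this is an $M$-operator. In particular the order $-3$ term cancels, the coefficients $L_{-2},L_{-1}$ of $[L_a,L_b]$ acquire the required forms $\tilde\nu_{ab}\a\a^t\s$ and $(\a\tilde\b_{ab}^t+\eps\tilde\b_{ab}\a^t)\s$, and in the symplectic case the condition \refE{add2} holds for $L_{ab}$ as well (this is exactly the content of the final part of the proof of \refL{algM}, which is why \refE{add2m} was imposed there). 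The condition \refE{add1} for $\so(n)$ concerns only the fixed vector $\a$ and is therefore automatic.

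It remains to verify the triviality conditions \refE{triv} for $[L_a,L_b]$, which is the property distinguishing $L$-operators among $M$-operators. With the time-derivatives dropping out, $\tilde\b_{ab}=\b_{ab}$ is given by the explicit formula from the proof of \refL{algM}, and I would evaluate $\tilde\b_{ab}^t\s\a$ term by term: the terms proportional to $\b_a^t\s\a$ and $\b_b^t\s\a$ vanish by the triviality of $L_a,L_b$; the terms containing $L_1$ vanish either because $\nu\equiv0$ (for $\gl,\sln,\so$) or, in the symplectic case, because $\a^t\s L_1\a=0$ by \refE{add2}; and the terms containing $L_0$ are rewritten using the symmetry $L_i^t\s=-\s L_i$ (valid for $\eps\ne0$) together with $L_0\a=\k\a$, reducing them once more to multiples of $\b^t\s\a=0$. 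Dually, expanding the order-zero coefficient and applying it to $\a$, the diagonal term $[L_{0a},L_{0b}]\a$ vanishes because $\a$ is a common eigenvector of $L_{0a}$ and $L_{0b}$, while the remaining cross-terms collapse to a scalar multiple of $\a$; this yields $([L_a,L_b])_0\a=\tilde\k\a$ and completes \refE{triv}.

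For $\g=\gl(n)$, where $\nu\equiv0$, $\eps=0$, $\s=\mathrm{id}$ and the pole at $\ga$ is simple, the associativity claim follows from the same local computation applied to $L_aL_b$: the double-pole coefficient $\a(\b_a^t\a)\b_b^t$ vanishes because $\b_a^t\a=0$, the residue takes the form $\a\tilde\b_{ab}^t$ with $\tilde\b_{ab}=L_{0b}^t\b_a+\k_a\b_b$ satisfying $\tilde\b_{ab}^t\a=0$, and the order-zero term applied to $\a$ is again a scalar multiple of $\a$. The main obstacle throughout is the symplectic case $\g=\spn(2n)$, where the genuine double pole forces careful bookkeeping of the orders $-3$ and $-2$ and, crucially, the propagation of the constraint \refE{add2}; the orthogonal case is milder because $\nu\equiv0$ removes the double pole from the start.
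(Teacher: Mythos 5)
Your proposal is correct and takes essentially the same route as the paper: the paper's own argument (following \cite{Sh_lopa}) likewise derives the theorem from \refL{algM} by viewing $L$-operators as $M$-operators with trivial dynamics, so that $[L_a,L_b]=M_{ab}$ is an $M$-operator, after which only the triviality conditions \refE{triv} (together with \refE{add1}, \refE{add2}) need to be checked. In fact your write-up supplies more detail than the paper, which at this point largely defers to \cite{KSlax} and \cite{Sh_lopa} and only adds the extension to $\g=\tsp$.
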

The theorem is proven in \cite{KSlax}. Another proof is given in
\cite{Sh_lopa} where the theorem is derived from the \refL{algM}.
It can be easy proven also for $\g=\tsp$. In the last case, making
use of the already proven result in the symplectic case, we obtain
that given $L_1,L_2\in\overline{{\mathfrak{sp}}(2n+2)}$, the
commutator $[L_1,L_2]$ belongs to
$\overline{{\mathfrak{sp}}(2n+2)}$ too. If $L_1,L_2\in\tsp$ at
every point then $[L_1,L_2]$ does as well, and according to the
proof of the \refL{algM} the corresponding $\b$ has a zero last
coordinate. Hence $[L_1,L_2]\in\overline\tsp$.

The Lie algebra $\gb$ is called a {\it Lax operator algebra}. It
$\gb$ depends both on the choice of Tyurin parameters and of the
points $P_+$ and $P_-$ but we omit any indication on this
dependence in our notation.

Consider $\glb(n)$ in more detail.

In this case $L_{-2}=0$, $L_{-1}=\a\b^t$ where $\b^t\a=0$ and
$L_0\a=\k\a$. These constraints imply that the elements of the Lax
operator algebra $\glb(n)$ can be considered as sections of the
endomorphism bundle $End(B)$, where $B$ is the holomorphic vector
bundle corresponding to the Tyurin data $T$.

The splitting $\gl(n)=\sn(n)\oplus \sln(n)$ given by
\begin{equation}
X\mapsto \left(\ \frac {\tr(X)}{n}I_n\ ,\ X-\frac {\tr(X)}{n}I_n\
\right)
\end{equation}
(where $I_n$ is the $n\times n$ unit matrix) induces a
corresponding splitting for $\glb(n)$:
\begin{equation}
 \glb(n)=\snb(n)\oplus \slnb(n).
\end{equation}

For $\snb(n)$ all coefficients in \refE{lexpan} are scalar
matrices. For this reason, the coefficients $L_{-1}$ vanish for
all $\ga\in W$, hence the elements of $\snb(n)$ are holomorphic at
$W$. Also $L_{s,0}$, as a scalar matrix, has any $\a_s$ as an
eigenvector. This means that, by definition,
\begin{equation}\label{E:sint} \snb(n)\cong \sn(n)\otimes \A\cong
\A
\end{equation}
as associative algebras.

Any Lax operator algebra $\gb$ possesses an {\it almost-graded
structure} (see \refT{almgrad} below for the definition).

Assume, all our marked points (including the points in $W$) are in
generic position, and $W\ne\emptyset$. Let us choose local
coordinates $z_\pm$ at $P_\pm$, and $z_s$ at $\ga_s$, $s=1,\ldots,
K$. Assume $\g$ to be a simple Lie algebra from our list. For an
arbitrary $m\in\Z$ consider the subspace
%\begin{equation}
\begin{multline}\label{E:almdeg}
\gb_m:=\{L\in\gb\mid \exists X_+,X_-\in\g \quad \text{such that}
\\
L(z_+)=X_+z^m_++O(z_+^{m+1}),\
L(z_-)=X_-z^{-m-g}_-+O(z_-^{-m-g+1})\}.
\end{multline}
%\end{equation}
For $\g=\gl(n)$ it is proven above that
$\laxgl(n)=\laxsl(n)\oplus\A\cdot id$ where $\A$ is the
Krichever-Novikov function algebra. In this case we set
\begin{equation}
 \laxgl(n)_m=\laxsl(n)_m\oplus\A_m\cdot id
\end{equation}
where $\A_m$ is the corresponding homogeneous subspace for $\A$
\cite{KNFa}. If $W=\emptyset$, we are in the setup of
Krichever-Novikov algebras and use the corresponding prescriptions
\cite{KNFa,ShN65}.

We call $\gb_m$ a ({\it homogeneous}) {\it  subspace of degree
$m$} in $\gb$.
\begin{theorem}[\cite{KSlax}]\label{T:almgrad}
The subspaces $\gb_m$ give the stucture of an almost-graded Lie
algebra on $\gb$. More precisely,

\noindent (1) $\dim \gb_m=\dim \g$;

\noindent (2) $\gb=\bigoplus\limits_{m\in\Z}^{}\gb_m$;

\noindent (3) $ [\gb_m,\gb_k]\subseteq
\bigoplus\limits_{h=m+k}^{m+k+M}\gb_h$, \newline where $M=g$ for
$\laxsl(n)$, $\laxso(n)$, $\laxsp(2n)$, $M=g+1$ for $\laxgl(n)$.
\end{theorem}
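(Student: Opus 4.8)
The plan is to reduce all three assertions to the Riemann--Roch theorem together with the Krichever--Novikov residue pairing, handling $\gl(n)$ through the endomorphism bundle and the simple algebras by a direct count of the Tyurin constraints. The decisive structural feature is the genus shift in \refE{almdeg}: a degree-$m$ element vanishes to order $\ge m$ at $P_+$ and has a pole of order $\le m+g$ at $P_-$, so that the associated twisting line bundle $\O((m+g)P_--mP_+)$ has degree $g$ \emph{independently of $m$}. This is exactly what will force $\dim\gb_m$ to be the same for every $m$, and it explains why the shift $-m-g$ was built into the definition.

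For assertion (1) I would first dispose of $\g=\gl(n)$. Since the elements of $\laxgl(n)$ are sections of $End(B)$ for the degree-$ng$ bundle $B$ attached to the Tyurin data, one has $\gb_m=H^0\big(\Sigma,\,End(B)\otimes\O((m+g)P_--mP_+)\big)$; here $\deg End(B)=0$ and $\rank End(B)=n^2$, so the twisted bundle has degree $n^2g$ and
\[
\chi=n^2g+n^2(1-g)=n^2=\dim\gl(n).
\]
For the simple algebras I would instead count directly: the $\g$-valued meromorphic functions vanishing to order $\ge m$ at $P_+$, with a pole of order $\le m+g$ at $P_-$ and poles of order $\le 2$ at the $K$ points of $W$, form a space whose dimension is given by Riemann--Roch, from which one subtracts the conditions imposed by \refE{lres1}, \refE{triv} and, where relevant, \refE{add1}, \refE{add2}. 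In the $\gl(n)$ model this bookkeeping reads $n^2$ conditions from $L_{-2}=0$, $n^2-(n-1)$ from the prescribed rank-one form of $L_{-1}$, and $n-1$ from $L_0\a=\k\a$, i.e.\ $2n^2$ per point; with $K=ng$ the total $2n^3g$ cancels the Riemann--Roch surplus and again leaves $\dim\gb_m=n^2$. The main obstacle is precisely to make this an \emph{exact} count rather than merely an Euler-characteristic one: one must show that for Tyurin data and points $P_\pm$ in general position the constraints are linearly independent, equivalently that the relevant $H^1$ vanishes, and that this holds uniformly in $m$. I expect this to use the genericity hypotheses on $W$ and the $\a_s$ already imposed in \refS{algebras}, and it has to be carried out class by class for $\laxsl$, $\laxso$ and $\laxsp$ since the local models differ (the double pole survives in the symplectic case, and \refE{add1}, \refE{add2} change the tally); granting it, $\dim\gb_m=\chi=\dim\g$ for all $m$, which is (1).

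For (2) the same generic nondegeneracy makes the leading-coefficient map $\gb_m\to\g$, $L\mapsto X_+$, an isomorphism, so one may fix a basis $e_{m,i}$ with $e_{m,i}(z_+)=x_i z_+^m+O(z_+^{m+1})$ for a basis $\{x_i\}$ of $\g$. Elements of distinct degree then have distinct orders of vanishing at $P_+$, giving directness of the sum, while exhaustion $\gb=\bigoplus_m\gb_m$ follows by the standard filtration argument: any $L\in\gb$ has finite pole order at $P_-$, and subtracting homogeneous components from the top degree downward terminates.

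Assertion (3) splits into a lower and an upper bound. The lower bound $h\ge m+k$ is immediate: for $L\in\gb_m$, $L'\in\gb_k$ the commutator $[L,L']$ vanishes to order $\ge m+k$ at $P_+$, so in the adapted basis no component of degree $<m+k$ occurs. For the upper bound one uses the pole order at $P_-$: $[L,L']$ has a pole of order $\le(m+g)+(k+g)=m+k+2g$ there, and a degree-$h$ basis vector has $\ord_{P_-}$ exactly $-h-g$, so only $h\le m+k+g$ can contribute. To turn this heuristic into a proof I would extract the degree-$h$ coefficient as a Krichever--Novikov residue pairing and show that its integrand is holomorphic in the separating annulus once $h>m+k+g$, so the coefficient vanishes; this yields $M=g$ for $\laxsl(n)$, $\laxso(n)$, $\laxsp(2n)$. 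Finally $\laxgl(n)=\laxsl(n)\oplus\snb(n)$ with $\snb(n)\cong\A$: the $\laxsl(n)$ part obeys the bound with $M=g$, and the additional unit in $M=g+1$ is accounted for by the scalar summand, whose behaviour is governed by the almost-grading of the Krichever--Novikov function algebra $\A$ entering the $\gl(n)$ structure \cite{KNFa,ShN65}.
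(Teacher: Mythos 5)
Your proposal cannot be compared with ``the paper's own proof'' in a literal sense, because the paper gives none: \refT{almgrad} is imported from \cite{KSlax} without proof, and the only things proved here that touch it are the analogous Riemann--Roch counts in \refL{move} and \refL{dimND}. Measured against those, your strategy is the right one --- compute the unconstrained dimension by Riemann--Roch and subtract the Tyurin constraints at the $\ga_s$ --- and your bookkeeping is correct: the observation that $\O((m+g)P_--mP_+)$ has degree $g$ independently of $m$, the Euler characteristic $n^2$ for $End(B)$ in the $\gl(n)$ case, and the tally of $2n^2$ net conditions per point cancelling against $K=ng$ are all exactly the counts one needs.

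As a proof, however, there are genuine gaps. The decisive one you name and then ``grant'': the passage from an Euler-characteristic count to an exact dimension, i.e.\ the linear independence of the constraints at the $\ga_s$ (equivalently the vanishing of the relevant $H^1$), uniformly in $m$. This is not a side condition; it \emph{is} part (1), and parts (2) and (3) lean on it as well --- injectivity of the leading-coefficient map $\gb_m\to\g$, directness and exhaustion of the sum, and the vanishing of the remainder in the upper-bound extraction of (3) are all equivalent to the same exact count. Uniformity in $m$ is also not automatic: the divisor $-mP_++(m+g)P_-$ moves in the Jacobian as $m$ varies, so one must argue from the standing genericity of $P_\pm$ and the Tyurin data (for instance via surjectivity of the jet-evaluation map at the $\ga_s$, whose obstruction is $H^1$ of that moving divisor) that no $m$ is special. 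Second, in (3) the ``Krichever--Novikov residue pairing'' is invoked but never constructed; for Lax operator algebras the dual system of $1$-forms compatible with the constraints at the $\ga_s$ is itself a nontrivial object (built in \cite{SSlax}), whereas the more elementary route is the remainder argument: after subtracting the components of degrees $m+k,\dots,m+k+g$ the remainder lies in a space whose Riemann--Roch count is $(\dim\g)\cdot 0$, hence vanishes --- again the exactness issue. Third, your explanation of $M=g+1$ for $\laxgl(n)$ is misdirected: scalar functions times the identity are central for the pointwise bracket, $[S,f\cdot id]=0$, so the summand $\snb(n)\cong\A$ contributes nothing to $[\gb_m,\gb_k]$; moreover a pointwise traceless element has no scalar components in its decomposition (since $\A=\bigoplus_h\A_h$ is direct), so the Lie-bracket bound for $\laxgl(n)$ already follows from the $\laxsl(n)$ bound with $M=g$. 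The constant $g+1$ in the statement, inherited from \cite{KSlax}, concerns the associative structure and the conventions for $\A_m$; it is in any case a weaker inclusion, and nothing about it needs to be ``accounted for'' by the scalar summand in the Lie-algebra setting.
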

%%%%%%%%%%%%%%%%%%%%%%%%%%%%%%%%%%%%%%%%%
\begin{corollary}\label{C:locex}
Let $X$ be an element of $\g$. For each $m$  there is a unique
element $X_m$ in $\gb_m$ such that
\begin{equation}\label{E:locex}
X_m= Xz_+^m+O(z_+^{m+1}).
\end{equation}
\end{corollary}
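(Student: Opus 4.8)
The plan is to reduce the corollary to a statement of linear algebra about a single evaluation map. Consider the \emph{leading coefficient map}
\[
  \phi_m\colon \gb_m\to\g,\qquad L\mapsto X_+,
\]
sending $L\in\gb_m$ to the coefficient $X_+$ in the expansion $L(z_+)=X_+z_+^m+O(z_+^{m+1})$ guaranteed by the definition \refE{almdeg}. This map is manifestly linear, and the assertion of the corollary is precisely that $\phi_m$ is a bijection: surjectivity yields the existence of an element $X_m$ with $\phi_m(X_m)=X$, and injectivity yields its uniqueness. Since $\dim\gb_m=\dim\g$ by part (1) of \refT{almgrad}, the two maps are between spaces of equal finite dimension, so it will suffice to establish that $\phi_m$ is injective.

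For the injectivity I would argue as follows. Unwinding \refE{almdeg}, membership $L\in\gb_m$ is equivalent to the two pole bounds $\ord_{P_+}(L)\ge m$ and $\ord_{P_-}(L)\ge -m-g$ (the vectors $X_\pm$ in the definition are then forced to be the respective leading coefficients, possibly zero). Suppose now $L\in\ker\phi_m$, i.e. the $z_+^m$-coefficient vanishes; then $\ord_{P_+}(L)\ge m+1$. Combined with $\ord_{P_-}(L)\ge -m-g\ge -(m+1)-g$, these two bounds say exactly that $L$ also satisfies the defining inequalities of $\gb_{m+1}$. Hence $L\in\gb_m\cap\gb_{m+1}$, and by the directness of the decomposition $\gb=\bigoplus_{h\in\Z}\gb_h$ from part (2) of \refT{almgrad} this intersection is zero. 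Thus $L=0$, proving that $\phi_m$ is injective, and therefore an isomorphism.

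The crux — and essentially the only place where any care is needed — is the compatibility of the two local conditions at $P_+$ and $P_-$: raising the order of vanishing at $P_+$ by one must not destroy the membership in an adjacent homogeneous piece, and indeed the bound $\ord_{P_-}\ge -m-g$ imposed by $\gb_m$ is stronger than the bound $\ord_{P_-}\ge -(m+1)-g$ required for $\gb_{m+1}$, so the inclusion $L\in\gb_{m+1}$ is automatic. For $\g=\gl(n)$ the homogeneous subspaces are defined through the splitting $\laxgl(n)_m=\laxsl(n)_m\oplus\A_m\cdot\mathrm{id}$; since both $\dim\gb_m=\dim\g$ and the directness of the sum persist in this case, the identical argument applies summand by summand, with $\phi_m$ respecting the decomposition into $\laxsl(n)$- and scalar parts. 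I do not anticipate any genuine obstacle beyond this bookkeeping, the whole statement being an immediate consequence of the two quantitative inputs (1) and (2) of \refT{almgrad}.
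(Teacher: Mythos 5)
Your proof is correct, and at its core it makes the same reduction as the paper: the corollary is equivalent to bijectivity of the leading-coefficient map $\phi_m\colon\gb_m\to\g$, and the dimension count $\dim\gb_m=\dim\g$ from part (1) of \refT{almgrad} does the heavy lifting. The difference is in what closes the argument. The paper's proof consists of the dimension count alone (``there is a unique combination of the basis elements''), which, read literally, is incomplete --- equality of dimensions does not by itself make a particular linear map bijective; it implicitly relies on the construction of the basis of $\gb_m$ in \cite{KSlax}, where the basis elements are built with prescribed leading terms at $P_+$, so that surjectivity of $\phi_m$ is built in. You instead derive injectivity from part (2): a kernel element satisfies $\ord_{P_+}(L)\ge m+1$ while retaining $\ord_{P_-}(L)\ge -m-g\ge -(m+1)-g$, hence lies in $\gb_m\cap\gb_{m+1}$, which vanishes by directness of $\gb=\bigoplus_h\gb_h$. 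This makes the corollary a formal consequence of the two statements of \refT{almgrad} exactly as stated, with no appeal to how the homogeneous subspaces were constructed --- a genuine gain in self-containedness over the paper's one-liner.

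The one soft spot is $\g=\gl(n)$, which you flag but dispose of a bit quickly. There $\laxgl(n)_m=\laxsl(n)_m\oplus\A_m\cdot id$ is not defined by order conditions, and for the exceptional Krichever-Novikov indices the scalar summand violates them (e.g.\ $\A_0=\C\cdot 1$, whose element has order $0$, not $-g$, at $P_-$), so your intersection argument does not literally apply summand by summand. Injectivity on the scalar part follows instead from the prescription of \cite{KNFa} that the basis element of the one-dimensional space $\A_m$ has order exactly $m$ at $P_+$, so a multiple of it with vanishing $z_+^m$-coefficient is zero outright. With that one-line substitution your argument covers all the algebras on the paper's list.
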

\begin{proof}
From the first statement of \refT{almgrad}, i.e. that
 $\dim \gb_m=\dim \g$
it follows that there is a unique combination of the basis
elements such that \refE{locex} is true.
\end{proof}

%%%%%%%%%%%%%%%%%%%%%%%%%%%%%%%%%%%%%%%%%%%%%%%
%%%%%%%%%%%%%%%%%%%%%%%%%%%%%%%%%%%%%%%%%%%%%%%%%%%%%%%%%%
\section{$\g$-valued Lax equations}
\label{S:Lax}
%%%%%%%%%%%%%%%%%%%%%%%%%%%%%%%%%%%%%%%%%%%%%%%%%%%%%%%%%%

In this section, we consider consistency of Lax equations of the
form
\begin{equation}\label{E:Lax}
   L_t=[L,M],   \quad L\in \gb , \quad M\in\overline{\g^\diamond}
\end{equation}
where $L,M$ are an $L$-operator and an $M$-operator respectively,
and the correspondence between $\g$ and $\g^\diamond$ is given as
follows:
\begin{equation}\label{E:corresp}
\g^\diamond=  \begin{cases}
    {\gl(n)} & \text{if}\ \g=\gl(n),\sln(n) \\
    {\so(2n+1)} & \text{if}\ \g=\so(2n),\so(2n+1)\\
    {\tsp} & \text{if}\ \g=\spn(2n).
  \end{cases}
\end{equation}
In all cases $\g$ is assumed to be embedded into $\g^\diamond$ in
a standard way, thus the commutator $[L,M]$ is well-defined.

Following \cite{Klax} let us assign every effective divisor
$D=\sum_i m_iP_i$ on $\Sigma$ with the space
$\L^D=\bigcup\limits_{(\a,\ga)}L^D_{\a,\ga}$ (over all Tyurin
parameters $(\a,\ga)$ satisfying \refE{add1}) where
\[ L^D_{\a,\ga}=\{ L\in\gb_{\a,\ga}\ |(L)+D\ge 0 \}
\]
and $\gb_{\a,\ga}$ is the Lax operator algebra corresponding to
$(\a,\ga)$.

Under certain conditions given by the \refL{move} below, the Lax
equation \refE{Lax} gives a well-defined dynamical system on
$\L^D$.

Let the upper dot mean the time derivative, and the term {\it
$\ga$-points} be reserved for the points $\ga_s$.
\begin{lemma}\label{L:mov-eq}
At $\ga$-points, the equations on main parts of $L$ and $M$
following from \refE{Lax} are fulfilled under the following
(sufficient) conditions:
\begin{equation}\label{E:mov-data}
{\dot z_\ga}=-\mu^t\s\a, \ \ {\dot\a}=-M_0\a+\k\a ,
\end{equation}
\begin{equation}\label{E:mov-b}
\begin{aligned} {\dot\b}&= M_0^t\b-L_0^t\mu+\k_L\mu-\k\b \
   \text{for}\ \g=\gl(n), \sln(n),\\
   {\dot\b}&= -M_0\b+L_0\mu+\k_L\mu-\k\b \
       \text{for}\ \g=\so(n),     \\
{\dot\b}&=  -M_0\b+L_0\mu+\k_L\mu-\k\b-\nu M_1\a+\l L_1\a \
       \text{for}\ \g=\spn(2n),
\end{aligned}
\end{equation}
\begin{equation}\label{E:mov-n}
  {\dot\nu}=2(\b^t\s\mu+\l\k_L-\nu\k)\ \text{for}\ \g=\spn(2n)
\end{equation} where $\k_L$ is
defined by $L_0\a=\k_L\a$. Moreover conditions ${\dot
z_\ga}=-\mu^t\s\a$ and \refE{mov-b} are necessary.
\end{lemma}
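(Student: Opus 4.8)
The plan is to compute the time derivative $L_t$ at a \emph{fixed} local coordinate $z$ and to match its principal part at each $\ga$-point with that of $[L,M]$, order by order in $(z-z_\ga)$. Since $L$ is built from the moving data through $z_\ga(t),\nu(t),\a(t),\b(t),L_0(t),\dots$ while $z$ is held fixed as $z_\ga$ moves, the chain rule gives $L_t=\widehat{\partial}_t L-{\dot z_\ga}\,\partial_z L$, where $\widehat{\partial}_t$ differentiates the Laurent coefficients only. Because $L$ is an $L$-operator it carries only the poles prescribed by \refE{lexpan}--\refE{lres1}, so I need to equate just the coefficients of $(z-z_\ga)^{-3}$, $(z-z_\ga)^{-2}$ and $(z-z_\ga)^{-1}$. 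On the right I expand $[L,M]$ from \refE{lexpan}, \refE{expan} and reduce every coefficient using the residue forms \refE{lres1}, \refE{res1}, the $L$-operator constraints \refE{triv}, \refE{add2}, the relations $\a^t\s\a=0$, $\b^t\s\a=0$, $\eps\a^t\s\mu=-\eps^2\mu^t\s\a$, and --- for $\eps\ne 0$ --- the symmetries $L_0^t=-\s L_0\s^{-1}$, $M_0^t=-\s M_0\s^{-1}$ that follow from \refE{llaeps}. Note that $(\partial_z L)_{-1}=0$, so there is no $\dot z_\ga$-transport contribution at order $-1$, while the transport terms at orders $-3$ and $-2$ are $2{\dot z_\ga}\,\nu\a\a^t\s$ and ${\dot z_\ga}\,L_{-1}$ respectively.

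First I treat the $(z-z_\ga)^{-3}$ coefficient. On the left it equals $2{\dot z_\ga}\,\nu\a\a^t\s$, while on the right the only surviving contribution, after using $\a^t\s\a=0$ and $\b^t\s\a=0$, is $-(1+\eps^2)\,\nu\,(\mu^t\s\a)\,\a\a^t\s$. Equating the two and using $\eps^2=1$ in the symplectic case yields ${\dot z_\ga}=-\mu^t\s\a$; in the cases $\g=\gl(n),\sln(n),\so(n)$ one has $\nu\equiv 0$ and this order is vacuous, the same identity then reappearing from the $L_{-1}$-part at the next order. At order $(z-z_\ga)^{-2}$ the left-hand coefficient is ${\dot\nu}\,\a\a^t\s+\nu(\dot\a\a^t+\a\dot\a^t)\s+{\dot z_\ga}\,L_{-1}$. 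Assembling $[L,M]_{-2}=\nu[\a\a^t\s,M_0]+[L_{-1},M_{-1}]+\l[L_0,\a\a^t\s]$, using the commutator identity for $L_{-1},M_{-1}$ recorded in the proof of \refL{algM} together with $\b^t\s\a=0$, and substituting $\dot\a=-M_0\a+\k\a$ from \refE{mov-data}, one finds that the $\nu(\dot\a\a^t+\a\dot\a^t)\s$ terms cancel, the $L_{-1}$-proportional terms match to reconfirm ${\dot z_\ga}=-\mu^t\s\a$, and the $\a\a^t\s$-component gives exactly the scalar equation \refE{mov-n}, ${\dot\nu}=2(\b^t\s\mu+\l\k_L-\nu\k)$, in the symplectic case (and is vacuous when $\nu\equiv 0$).

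The decisive computation is the coefficient of $(z-z_\ga)^{-1}$, which yields the $\b$-dynamics \refE{mov-b}. With no transport term at this order, the left-hand coefficient is simply $\widehat{\partial}_t L_{-1}=(\a\dot\b^t+\eps\dot\b\a^t)\s+(\dot\a\b^t+\eps\b\dot\a^t)\s$; substituting $\dot\a=-M_0\a+\k\a$ isolates the unknown $\dot\b$ in the admissible piece $(\a\dot\b^t+\eps\dot\b\a^t)\s$, all remaining terms being explicit. On the right I assemble $[L,M]_{-1}$ from $[L_{-2},M_1]+[L_{-1},M_0]+[L_0,M_{-1}]+[L_1,M_{-2}]$ and reduce it with the transposition symmetries $L_0^t=-\s L_0\s^{-1}$, $M_0^t=-\s M_0\s^{-1}$ and the eigenrelations $L_0\a=\k_L\a$, $M_0\a=\k\a-\dot\a$. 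Equating the two sides produces an identity $(\a\dot\b^t+\eps\dot\b\a^t)\s=(\a\,\eta^t+\eps\,\eta\,\a^t)\s$ from which \refE{mov-b} is read off, the extra terms $-\nu M_1\a+\l L_1\a$ in the symplectic regime arising precisely from $[L_{-2},M_1]$ and $[L_0,M_{-1}]$ (which vanish when $\nu\equiv\l\equiv 0$). I expect the main obstacle to be verifying that the whole right-hand side closes up into the admissible form $(\a\,\eta^t+\eps\,\eta\,\a^t)\s$: this is exactly where the symplectic constraints \refE{add2}, \refE{add2m} are indispensable, since $\a^t\s L_1\a=0$ and $\a^t\s M_1\a=0$ are what annihilate the potential $\a\a^t\s$-obstruction and thereby guarantee that a solution $\dot\b$ exists. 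This is also the only place where the individual structure of $\gl$, $\so$ and $\spn$ genuinely enters.

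Finally, necessity follows by reading the same matchings backwards. The top singular order leaves no freedom in ${\dot z_\ga}$ beyond ${\dot z_\ga}=-\mu^t\s\a$ (forced by the $\a\a^t\s$-component in the symplectic case and by the $L_{-1}$-component otherwise), and the coefficient of $(z-z_\ga)^{-1}$ determines $\dot\b$ uniquely modulo the scalar gauge in the normalization of $\a$ encoded by the $\k$- and $\k_L$-terms, so \refE{mov-b} must hold whenever \refE{Lax} does. By contrast the $\dot\a$-equation is only sufficient: its component along $\a$ is unconstrained, reflecting that $\a$ is defined only up to scale.
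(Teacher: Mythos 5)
Your proposal follows essentially the same route as the paper's own proof: compute $\dot L$ by the chain rule with the $\dot z_\ga$-transport terms (this is the paper's \refE{dot_L}), expand $[L,M]$ at each $\ga$-point (the paper does this by specializing \refE{mcomm} of \refL{algM} to $M_a=L$, $M_b=M$), and match the coefficients of $(z-z_\ga)^{-3}$, $(z-z_\ga)^{-2}$, $(z-z_\ga)^{-1}$, which yield respectively $\dot z_\ga=-\mu^t\s\a$, \refE{mov-n}, and \refE{mov-b}. Your treatment of necessity --- the scalar gauge in $\a$ encoded by the $\k$- and $\k_L$-terms --- is, if anything, more explicit than the paper's and agrees with its Remark that one may instead take $\dot\a=-M_0\a$ and drop $\k\b$.

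Two local points are off, however. First, the extra symplectic terms $-\nu M_1\a+\l L_1\a$ in \refE{mov-b} arise from $[L_{-2},M_1]$ and $[L_1,M_{-2}]$ (the terms $-\l_aM_{1b}\a+\l_bM_{1a}\a$ in the paper's $\mu_{ab}$), not from $[L_{-2},M_1]$ and $[L_0,M_{-1}]$ as you write: the commutator $[L_0,M_{-1}]$ does not vanish when $\nu\equiv\l\equiv 0$ --- it is what produces the terms $L_0\mu+\k_L\mu$ present in every case. Second, and more substantively, the constraints \refE{add2}, \refE{add2m} are not what makes the right-hand side close up into the admissible form $(\a\eta^t+\eps\eta\a^t)\s$ at order $-1$. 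The transposition symmetries $L_i^t=-\s L_i\s^{-1}$, $M_i^t=-\s M_i\s^{-1}$ alone give $\nu[\a\a^t\s,M_1]=(\a\eta_1^t+\eps\eta_1\a^t)\s$ with $\eta_1=-\nu M_1\a$, and likewise $\l[L_1,\a\a^t\s]=(\a\eta_2^t+\eps\eta_2\a^t)\s$ with $\eta_2=\l L_1\a$; no condition on $\a^t\s M_1\a$ or $\a^t\s L_1\a$ is used. This is precisely what the Corollary following the lemma records: the identity $\nu\cdot\a^t\s M_1\a=\l\cdot\a^t\s L_1\a$ holds along solutions \emph{regardless} of the requirements \refE{add2m} and \refE{add2} (those constraints are needed elsewhere, e.g.\ for the closure statement of \refL{algM} and in the later lemmas). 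Neither point breaks your argument --- carrying out the computations you outline does produce \refE{mov-data}--\refE{mov-n} --- but the claimed mechanism at order $-1$, which you single out as the crux, should be corrected.
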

\begin{proof}
By a straightforward computation we have
\begin{equation}\label{E:dot_L}
\begin{aligned} {\dot L}&= 2{\dot z}_\ga\frac{\nu\a\a^t\s}{(z-z_\ga)^3}+
   \frac{{\dot\nu}\a\a^t\s+\nu\dot\a\a^t\s+\nu\a\dot\a^t\s+{\dot z}_\ga(\a\b^t
   +\eps\b\a^t)\s}{(z-z_\ga)^2}+\\
   &+ \frac{\dot\a\b^t\s+\a\dot\b^t\s+\eps\dot\b\a^t\s+\eps\b\dot\a^t\s}{(z-z_\ga)}
   +({\dot L_0}-{\dot z_\ga}L_1)+\ldots\ .
\end{aligned}
\end{equation}
Using \refE{mcomm} for $M_a=L$, $M_b=M$ we obtain
\begin{equation}\label{E:LMcomm}
\begin{aligned} \left[L,M\right]&= \frac{(1+\eps)^2(-\nu\cdot\mu^t\s\a)\a\a^t\s}{(z-z_\ga)^3}+
   \frac{\nu(\dot\a\a^t+\a\dot\a^t)\s+\l_{ab}\a\a^t\s-(\mu^t\s\a)L_{-1}}{(z-z_\ga)^2}+\\
   &+ \frac{(\dot\a\b^t+\eps\b\dot\a^t)\s+(\a\mu^t_{ab}+\eps\mu_{ab}\a^t)\s}{(z-z_\ga)}
   +\ldots\ .
\end{aligned}
\end{equation}
Note that the second of the relations \refE{mov-data} is used in
deriving the last relation.

If $\nu\ne 0$ (i.e. $\g=\spn(2n)$) then the order $-3$ terms are
equal if and only if
\[ {\dot z}_\ga=-\mu^t\s\a .
\]
If $\nu\equiv 0$ then the order $-3$ terms of \refE{dot_L} and
\refE{LMcomm} both are equal to $0$. The order~$-2$ terms are
equal if and only if
\[ \dot\nu\a\a^t\s+{\dot
z}_\ga(\a\b^t+\eps\b\a^t)\s=\l_{ab}\a\a^t\s-(\mu^t\s\a)L_{-1}
\]
where $\l_{ab}$ is defined by \refE{mcomm}. By the previous
relation we have
\[ \dot\nu\a\a^t\s=\l_{ab}\a\a^t\s
\]
which is fulfilled if $\dot\nu=\l_{ab}$. Note that this relation
is trivial except for $\g=\spn(2n)$, in which case
$\l_{ab}=2(\l\k_L-\nu\k+2\b^t\s\mu)$ and our relation coincides
with \refE{mov-n}.

In a similar way, comparing the $-1$ order terms of the relations
\refE{dot_L} and \refE{LMcomm} we observe that they are equal if
$\dot\b=\mu_{ab}$ where $\mu_{ab}$ are defined by \refE{mcomm}.
This gives the relations \refE{mov-b}. Since the relations
\refE{mcomm} themselves are derived under the assumptions
\refE{mov-data} we obtain the Lemma.
\end{proof}
\begin{remark}
The equation \refE{mov-b} for $\g=\so(n)$ follows from the
corresponding equation for $\gl(n)$ by relations $M_0^t=-M_0$,
$L_0^t=-L_0$. It follows also from the equation for $\g=\spn(2n)$,
with the corresponding replacement of the matrix $\s$, if
$\l=\nu=0$ which is true for $\g=\so(n)$.
\end{remark}
\begin{remark}
 The second condition in \refE{mov-data} and conditions
 \refE{mov-b} are not necessary. The statement remains true if we
 take ${\dot\a}=-M_0\a$ in \refE{mov-data} and exclude the term
 $\k\b$ in \refE{mov-b}.
\end{remark}
\begin{corollary}
 The following relation holds along solutions of the Lax equation
 in the symplectic case regardless to the requirements \refE{add2m} and
 \refE{add2}:
 \[
    \nu\cdot\a^t\s M_1\a=\l\cdot\a^t\s L_1\a .
 \]
\end{corollary}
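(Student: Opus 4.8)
The plan is to obtain the stated identity by differentiating the defining $L$-operator constraint $\b^t\s\a=0$ from \refE{triv} along the flow and reading off the result. Along a genuine solution of the Lax equation the operator $L$ remains an $L$-operator at every time, so $\b^t\s\a$ is identically zero and its time derivative vanishes; the content of the corollary will be precisely the expression one gets for that derivative after substituting the equations of motion, which is why it holds independently of the extra constraints \refE{add2m} and \refE{add2}.

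First I would compute $\frac{d}{dt}(\b^t\s\a)=\dot\b^t\s\a+\b^t\s\dot\a$, inserting $\dot\a=-M_0\a+\k\a$ from \refE{mov-data} and the symplectic form of $\dot\b$ from \refE{mov-b}, namely $\dot\b=-M_0\b+L_0\mu+\k_L\mu-\k\b-\nu M_1\a+\l L_1\a$. The key algebraic tool is the identity $X^t\s=-\s X$ valid for every $X\in\spn(2n)$, in particular for $M_0,L_0,M_1,L_1$, which converts each transposed matrix acting on the left into an untransposed one acting on the right of $\s$. Applying this termwise, the $M_0$ contribution produces $+\b^t\s M_0\a$ from $\dot\b^t\s\a$, which cancels the term $-\b^t\s M_0\a$ coming from $\b^t\s\dot\a$; the terms $-\k\b^t\s\a$ and $\k\b^t\s\a$ vanish by the constraint $\b^t\s\a=0$; and the $L_0\mu$ and $\k_L\mu$ contributions cancel against each other once one uses the second relation of \refE{triv}, $L_0\a=\k_L\a$, to rewrite $\mu^t\s L_0\a=\k_L\mu^t\s\a$.

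What survives are exactly the two terms carrying $M_1$ and $L_1$: the summand $-\nu M_1\a$ in $\dot\b$ yields $+\nu\a^t\s M_1\a$, and the summand $\l L_1\a$ yields $-\l\a^t\s L_1\a$, so that $\frac{d}{dt}(\b^t\s\a)=\nu\a^t\s M_1\a-\l\a^t\s L_1\a$. Setting the left-hand side to zero gives the claimed relation.

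I expect the only genuinely delicate point to be the justification that $\frac{d}{dt}(\b^t\s\a)=0$, that is, that $\b^t\s\a$ is truly conserved along solutions rather than merely imposed at the initial instant. This holds because a solution of the Lax equation stays in the space of $L$-operators, so \refE{triv} is satisfied for all $t$; equivalently, the computation above shows that the constraint $\b^t\s\a=0$ is propagated in time precisely when $\nu\a^t\s M_1\a=\l\a^t\s L_1\a$, so the asserted identity and the preservation of the $L$-operator constraint are two aspects of the same consistency condition. The remaining steps are routine matrix bookkeeping with the symplectic transpose rule.
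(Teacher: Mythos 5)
Your proposal is correct and follows essentially the same route as the paper: the paper multiplies the symplectic relation \refE{mov-b} for $\dot\b$ on the left by $\a^t\s$, uses $\a^t\s M_0=\dot\a^t\s$, $\a^t\s L_0=-\k_L\a^t\s$ and $\a^t\s\b=0$, and recognizes the surviving terms as $\frac{d}{dt}(\a^t\s\b)=0$ — exactly your computation of $\frac{d}{dt}(\b^t\s\a)$ term by term via the symplectic transpose rule. The conserved-constraint justification you flag as delicate is handled in the paper the same way you handle it: along a solution $L$ remains an $L$-operator, so \refE{triv} holds identically in $t$ and its derivative vanishes.
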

\begin{proof}
Let us multiply the last of the relations \refE{mov-b} by $\a^t\s$
from the left:
\[
\a^t\s{\dot\b}=  -\a^t\s M_0\b+\a^t\s
L_0\mu+\k_L\a^t\s\mu-\k\a^t\s\b-\nu\a^t\s M_1\a+\l\a^t\s L_1\a
\]
and perform the following replacements: $\a^t\s M_0=\dot\a^t\s$,
$\a^t\s L_0=-\k_L\a^t\s$, $\a^t\s\b=~0$. We will obtain
\[
\nu\a^t\s M_1\a-\l\a^t\s L_1\a=\frac{d}{dt}(\a^t\s\b)=0.
\]
\end{proof}
The next Lemma shows that the equations \refE{mov-b},
\refE{mov-n} can be thrown off. The equations \refE{mov-data} are
most important. These are the  equations of motion for Tyurin
parameters. They are heavily used in \cite{Klax}. Originally, the
concept of moving Tyurin parameters was invented in \cite{rKNU},
where it served for effective solution of Kadomtsev-Petviashvili
equations in certain cases.

Let $T_L\L^D$ denote the tangent space to $\L^D$ at a point $L$.
\begin{lemma}\label{L:move} $[L,M]\in T_L\L^D\quad\Leftrightarrow\quad
([L,M])+D\ge 0$ outside $\ga$'s and the equations \refE{mov-data}
are fulfilled at every $\ga$.
\end{lemma}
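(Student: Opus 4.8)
The plan is to read off the tangent space $T_L\L^D$ directly from the constraints defining $\L^D$ and to match it against the explicit singular expansion of $[L,M]$ already computed in \refL{mov-eq}. A point of $\L^D$ is a Lax operator $L$ together with its Tyurin data $(\a,z_\ga)$, so a tangent vector is the velocity $\dot L$ of a curve $t\mapsto(L(t);\a(t),z_\ga(t))$ staying in $\L^D$. Differentiating the constrained expansion \refE{lexpan}, \refE{lres1}, \refE{triv}, in which $\nu,\a,\b,z_\ga,L_0,\dots$ all depend on $t$, produces precisely the form \refE{dot_L} near each $\ga$, with the velocities $\dot z_\ga,\dot\nu,\dot\a,\dot\b,\dot L_0,\dots$ free apart from the requirement that they preserve \refE{triv}, \refE{add1}, \refE{add2}; away from the $\ga$'s the velocity inherits a pole of order at most $m_i$ at each $P_i$. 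Hence $[L,M]\in T_L\L^D$ is equivalent to saying that $[L,M]$ can be written in this form, which splits into a condition at the marked points and a condition at the $\ga$'s.

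At the marked points the matter is immediate: $L$ and $M$ are holomorphic off the $\ga$'s and the $P_i$, so $[L,M]$ is holomorphic there too, and the only requirement for $[L,M]$ to be a velocity is that its pole orders at the $P_i$ not exceed $D$, i.e. $([L,M])+D\ge0$ outside the $\ga$'s; conversely any such behaviour is realised by a suitable curve. At each $\ga$ I would compute $[L,M]$ from \refE{LMcomm} (that is, \refE{mcomm} with $M_a=L$, $M_b=M$, simplified by $\b^t\s\a=0$ and $L_0\a=\k_L\a$) and equate it with \refE{dot_L} order by order. The decisive point is the coefficient of $(z-z_\ga)^{-3}$: a deformation of $L$ with $z_\ga$ held fixed produces no third-order pole, whereas in the symplectic case $[L,M]$ carries the nonzero term displayed in \refE{LMcomm}; this pole can enter the velocity only through a motion of the pole location, forcing $\dot z_\ga=-\mu^t\s\a$, the first equation of \refE{mov-data} (in the remaining cases $\nu\equiv0$ and the same relation is imposed at order $-2$). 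The order $-2$ coefficient then forces $\dot\a=-M_0\a+\k\a$ up to the scalar ambiguity of $\a$ (the freedom in $\k$), the second equation of \refE{mov-data}, and determines $\dot\nu$ through \refE{mov-n}; the order $-1$ coefficient determines $\dot\b$ through \refE{mov-b}, and the nonnegative orders determine $\dot L_0,\dots$ with no further constraint. This is exactly the content of \refL{mov-eq}, whence the matching at $\ga$ succeeds if and only if \refE{mov-data} holds.

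It remains to check that, once \refE{mov-data} is imposed, the auxiliary velocities produced above actually preserve the remaining defining relations of $\L^D$, so that a genuine curve in $\L^D$ exists. For $\so(n)$ the relation \refE{add1} is preserved because $\frac{d}{dt}(\a^t\a)=2\a^t\dot\a=-2\a^tM_0\a+2\k\a^t\a=0$, using $M_0^t=-M_0$ and \refE{add1}. For $\spn(2n)$ the relation \refE{add2} is preserved by the identity $\nu\cdot\a^t\s M_1\a=\l\cdot\a^t\s L_1\a$ of the corollary above, which together with \refE{add2m} and \refE{add2} keeps $\a^t\s L_1\a$ constantly zero. Combining the two localisations then yields the asserted equivalence.

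I expect the main obstacle to be conceptual rather than computational: one must see that $[L,M]$, evaluated with static Tyurin data, is genuinely not tangent to $\L^D$ because of its third-order pole at the $\ga$'s, and that allowing the Tyurin parameters to move according to \refE{mov-data} is exactly what cancels this obstruction. The technically delicate step is to disentangle, in the order-by-order comparison of \refE{dot_L} with \refE{LMcomm}, the equations that are genuine constraints on $[L,M]$ (the pole bound and \refE{mov-data}) from those that merely define the auxiliary velocities $\dot\nu,\dot\b,\dot L_0,\dots$, and to confirm that \refE{triv}, \refE{add1}, \refE{add2} remain satisfied along the resulting motion.
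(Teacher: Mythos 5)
Your proposal reproduces the paper's overall strategy (linearize the defining constraints of $\L^D$ at $L$, then match the Laurent expansion \refE{LMcomm} of $[L,M]$ against the shape \refE{dot_L} of a velocity), but it has a genuine gap at the decisive step. Differentiating the constraints only proves the inclusion $T_L\L^D\subseteq{\mathcal T}^D$, where ${\mathcal T}^D$ is the space of \emph{formal} solutions of the linearized equations (functions of the form \refE{t-expan} subject to \refE{t-tr}, \refE{t-eig} and the divisor bound outside the $\ga$'s). Your sentence ``conversely any such behaviour is realised by a suitable curve'' asserts the reverse inclusion ${\mathcal T}^D\subseteq T_L\L^D$ without proof, and that is exactly the direction the lemma is needed for: in \refT{hierarch} one knows $([L,M_a])+D\ge 0$ and \refE{mov-data} and must conclude $[L,M_a]\in T_L\L^D$. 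In general, a solution of linearized constraints need not be the velocity of an actual curve in the constraint variety. The paper closes this by a dimension count: $\dim{\mathcal T}^D$ is computed by Riemann--Roch on the divisor ${\tilde D}=D+3\sum\ga_s$, subtracting the relations \refE{t-dim}, \refE{t-tr}, \refE{t-eig}, \refE{mov-data} and adding back the free parameters $\dot z_\ga,\dot\nu,\dot\a,\dot\b$ (a net of $3\dim\g$ conditions per $\ga$-point), which yields $(\dim\g)(\deg D-g+1)$; this equals $\dim\L^D$ as computed from the almost-graded structure of $\gb$ (\refT{almgrad}), so the embedding is an equality. Your checks that \refE{add1} and \refE{add2} are preserved along the motion do not substitute for this: they concern whether a flow, once known to exist, stays in $\L^D$, not whether a prescribed formal velocity is integrable into a curve in $\L^D$ at all.

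There is a second, more computational omission. You treat the nonnegative orders of the expansion as ``determining $\dot L_0,\dots$ with no further constraint.'' That is false: differentiating the eigenvector relation $L_0\a=\k\a$ in \refE{triv} (together with the motion of $z_\ga$) imposes the zeroth-order condition \refE{t-eig}, namely $T_0\a=\k\dot\a+\dot\k\a-L_0\dot\a-\dot z_\ga L_1\a$, which every tangent vector must satisfy and which \refL{mov-eq} --- a statement only about the \emph{main} (singular) parts --- does not provide. Verifying \refE{t-eig} for $T_0=[L,M]_0$ is a nontrivial computation, carried out in the paper as \refE{dest} for $\g=\gl(n)$ and in a longer form for $\g=\spn(2n)$, and it is precisely the place where the constraints $\a^t\s M_1\a=0$ \refE{add2m} and $\a^t\s L_1\a=0$ \refE{add2} are used. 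Without this verification, even the inclusion half of your argument at the $\ga$-points is incomplete.
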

\begin{proof} In our proof we follow the lines of \cite{Klax}
where the Lemma was formulated and proved for $\g=\gl(n)$.

Let $z$ be a local coordinate in an open set containing a point
$\ga$, and $z_\ga$ be the corresponding coordinate of $\ga$.

Identify $T\L^D$ with the space ${\mathcal T}^D$ of all
meromorphic $\g$-valued functions $T$ such that at every $\ga$
\begin{equation}\label{E:t-expan}
 \begin{aligned}
 T&=2\dot z_\ga\frac{\nu\a\a^t\s}{(z-z_\ga)^3}+
 \frac{\dot\nu\a\a^t\s+\nu(\dot\a\a^t+\a\dot\a^t)\s
 +{\dot z_\ga}(\a\b^t+\eps\b\a^t)\s}{(z-z_\ga)^2}+ \\
 &+\frac{({\dot\a}\b^t+\eps\b{\dot\a^t}+\a{\dot\b^t}
 +\eps{\dot\b}\a^t)\s}{z-z_\ga}+T_0+ \ldots\ ,
 \end{aligned}
\end{equation}

\begin{equation}\label{E:t-tr}
 {\dot\a}^t\s\b+\a^t\s{\dot\b}=0
\end{equation}
\begin{equation}\label{E:t-eig}
  T_0\a=\k{\dot\a}+{\dot\k}\a-L_0{\dot\a}-{\dot z_\ga}L_1\a
\end{equation}
where ${\dot z_\ga}, {\dot\k}$ are constants, ${\dot\a},{\dot\b}$
are constant vectors fulfilling the relations \refE{mov-data}, and
the divisor of $T$ outside the points $\ga$ is greater or equal to
$-D$.

The relation \refE{t-expan} is modelled on \refE{dot_L} which is
obtained by the time derivation of \refE{lexpan}. In particular
\[ T_0=\dot L_0-\dot z_\ga L_1.
\]
Together with the time derivation of \refE{triv} this gives
\refE{t-eig}. Thus $T\L^D$ embeds to ${\mathcal T}^D$. Let us
check coincidence of dimensions of those spaces. It can be done in
a quite uniform way, so we show it in the most difficult case
$\g=\spn(2n)$. We have $(T)+{\tilde D}\ge 0$ where ${\tilde
D}=D+3\sum\ga$, and $\deg{\tilde D}=\deg\, D+3K$. By Riemann-Roch
theorem  $\dim\{T|(T)+{\tilde D}\ge 0\}=(\dim\g)(\deg\,
D+3K-g+1)$. The elements of ${\mathcal T}^D$ are distinguished in
the space $\{T|(T)+{\tilde D}\ge 0\}$ by the following relations.
First at every point $\ga$ we have
\begin{equation}\label{E:t-dim}
\begin{aligned}
   T_{-3}&= 2{\dot z_\ga}\nu\a\a^t\s ,\\
   T_{-2}&= {\dot\nu}\a\a^t\s+\nu({\dot\a}\a^t+\a{\dot\a}^t)\s+
      {\dot z_\ga}(\a\b^t+\b\a^t)\s ,\\
   T_{-1}&= ({\dot\a}\b^t+\a{\dot\b}^t+{\dot\b}\a^t+\b{\dot\a}^t)\s .
\end{aligned}
\end{equation}
Since the elements on the left hand side belong to $\g$,
\refE{t-dim} gives $3\dim\g$ relations. Taking account of
\refE{t-tr} and \refE{t-eig} gives $2n+1$ relations, and
\refE{mov-data} give another $2n+1$ relations. Thus we have
$3\dim\g+4n+2$ relations. These relations contain $4n+2$ free
parameters ${\dot z_\ga}$, ${\dot\nu}$, ${\dot\a}$, ${\dot\b}$.
Thus we actually obtained $3\dim\g$ relations at every point
$\ga$, and the number of those points is $K$, hence we have
$3(\dim\g)K$ relations. We see that $\dim\,{\mathcal
T}^D=(\dim\g)(\deg\, D-g+1)$.

But $\L^D$ has the same dimension. We can count this in a quite
similar way or make use of the \refT{almgrad}.  Assume, we are in
the two-point situation, i.e. $D=-m_+P_++(m_-+g)P_-$ where
$m_->m_+$ for simplicity. Then
$\L^D=\g_{m_+}\oplus\ldots\oplus\g_{m_-}$. By \refT{almgrad}
$\dim\L^D=(\dim\g)(m_--m_++1)$ which exactly is equal to
$(\dim\g)(\deg\, D-g+1)$. We conclude that $\dim\,{\mathcal
T}^D=\dim T_L\L^D$, hence these linear spaces coincide.

Next we prove that if $L$, $M$ are as above then $[L,M]$ possesses
the properties \refE{t-expan}---\refE{t-eig}, i.e. belongs to
${\mathcal T}^D$. The proof is straightforward again. For example,
show \refE{t-eig}. Denote the degree zero term  $[L,M]_0$ of the
commutator by $T_0$. Then in the case $\g=\gl(n)$ we find by a
computation
\begin{equation}\label{E:dest}
   T_0\a=\a(\b^tM_1\a-\mu^tL_1\a)+(L_0-\k)M_0\a+L_1\a(\mu^t\a).
\end{equation}
If we replace $\mu^t\a$ with $-\dot z_\ga$, $M_0\a$ with $-\dot\a$
and denote $\b^tM_1-\mu^tL_1\a$ by $\dot\k$, we obtain
\refE{t-eig}. For other types of $\g$ the expression for $T_0\a$
is more complicated and  we use the relations
\refE{triv}---\refE{add2} to identify it with \refE{t-eig}.

In the case $\g=\spn(2n)$ we get
\begin{align*}
  T_0\a& =\a(\nu\a^t\s M_2\a+\b^t\s M_1\a-\mu^t\s L_1\a-\l\a^t\s L_2\a)
   +(L_0-\k)M_0\a+L_1\a(\mu^t\s\a)  \\
  & +\b\a^t\s M_1\a-\mu\a^t\s L_1\a
\end{align*}
instead \refE{dest}. Making use of the relations $\a^t\s M_1\a=0$,
$\a^t\s L_1\a=0$ we obtain the same result.
\end{proof}
\refL{move} directly implies that whenever $([L,M])+D\ge 0$
outside $\ga$'s, and the equations of moving poles are fulfilled
the Lax equation \refE{Lax} is consistent.

%%%%%%%%%%%%%%%%%%%%%%%%%%%%%%%%%%%%%%%%%%%%%%%%%%%%%%%%%%
\section{Hierarchies of commuting flows}
\label{S:Hierarch}
%%%%%%%%%%%%%%%%%%%%%%%%%%%%%%%%%%%%%%%%%%%%%%%%%%%%%%%%%%
In the case of $\g=\gl(n)$ I.Krichever \cite{Klax} has shown that
the Lax operator considered as a function on $\L^D$ yields the
hierarchy of commuting flows on $\L^D$. The generalization of that
result on the classic Lie algebras appeared to be not unique.

In this article, we present two methods of constructing
hierarchies of commuting flows given by Lax operators. They differ
by the space of $M$-operators and the number of $\ga$-points. By
the first method we obtain below (following \cite{Klax}) the $A_n$
elliptic Calogero-Moser system. By the second method we obtain the
$C_n$ and $D_n$ elliptic Calogero-Moser systems.

The first method is as follows. For a divisor $D=\sum m_iP_i$
define a divisor ${\widetilde D}=D+\d\sum_{s=1}^{K}\ga_s$ where
\[
K=\begin{cases}
    ng, & \g=\gl(n),\ \sln(n),\ \so(2n),\ \so(2n+1), \\
    (n+1)g, & \g=\spn(2n)
\end{cases}
\]
and
\[
  \d=\begin{cases}
    1,  & \g=\gl(n),\ \sln(n),\ \so(2n),\ \so(2n+1), \\
    2,  & \g=\spn(2n),
  \end{cases}
\]
($K$ and $\d$ depend on $\g^\diamond$ actually).

Let us define $\ND\subset \overline{\g^\diamond}$ as a subspace of
$M$-operators such that $(M)+{\widetilde D}\ge 0$.
\begin{lemma}\label{L:dimND}
 $\dim\ND=(\dim{\g^\diamond})(\deg D+1)$.
\end{lemma}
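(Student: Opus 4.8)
The plan is to obtain $\dim\ND$ by a Riemann--Roch computation followed by a count of the conditions imposed at the $\gamma$-points, exactly in the spirit of the dimension count for $\mathcal{T}^D$ in the proof of \refL{move}. First I would forget momentarily the residue conditions \refE{res1} (and \refE{add2m} in the symplectic case) and consider the ambient space of all meromorphic $\g^\diamond$-valued functions $M$ with $(M)+\widetilde D\ge 0$. Such an $M$ is a global section of $\g^\diamond\otimes\mathcal{O}(\widetilde D)$, and since this sheaf is a direct sum of $\dim\g^\diamond$ copies of $\mathcal{O}(\widetilde D)$, Riemann--Roch gives that the ambient space has dimension $(\dim\g^\diamond)(\deg\widetilde D-g+1)$ once $\deg\widetilde D>2g-2$, so that the higher cohomology vanishes (smaller degrees are reduced to this case as in \refL{move}). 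Since $\widetilde D=D+\delta\sum_{s=1}^{K}\gamma_s$, this ambient dimension equals $(\dim\g^\diamond)(\deg D+\delta K-g+1)$.

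Second, I would count how many independent linear conditions the definition of an $M$-operator imposes at each $\gamma_s$. In the ambient space the polar part $M_{-\delta},\dots,M_{-1}$ at $\gamma_s$ consists of $\delta$ arbitrary elements of $\g^\diamond$, i.e. $\delta\dim\g^\diamond$ free coefficients. The conditions \refE{res1} force this polar part into the image of the map $(\lambda,\mu)\mapsto\big(\lambda\alpha\alpha^t\s,\ (\alpha\mu^t+\eps\mu\alpha^t)\s\big)$ (with $\lambda$ present only when $\delta=2$), and in the symplectic/$\tsp$ case the single scalar relation \refE{add2m} on the Taylor coefficient $M_1$ is added. Let $f$ denote the resulting effective number of free local parameters at one $\gamma_s$, the relation \refE{add2m} being counted as lowering $f$ by one. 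Then the number of conditions at each $\gamma_s$ is $\delta\dim\g^\diamond-f$, and summing over the $K$ points and subtracting from the ambient dimension yields
\[
\dim\ND=(\dim\g^\diamond)(\deg D+\delta K-g+1)-K\big(\delta\dim\g^\diamond-f\big).
\]
Hence the assertion $\dim\ND=(\dim\g^\diamond)(\deg D+1)$ is equivalent to the single numerical identity $Kf=(\dim\g^\diamond)\,g$, i.e. $f=\dim\g^\diamond/r$ where $r=K/g$ is the rank attached to $\g^\diamond$.

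The last step, and the one I expect to be the real obstacle, is to verify this identity class by class along the list \refE{corresp}. For $\g^\diamond=\gl(n)$ one has $\eps=0$, the map $\mu\mapsto\alpha\mu^t$ is injective, so $f=n$ and $Kf=ng\cdot n=(\dim\gl(n))g$; this is Krichever's transparent model case. For $\g^\diamond=\tsp$ one has $\delta=2$ and $\eps=1$: the term $\lambda\alpha\alpha^t\s$ contributes one parameter, $\mu\mapsto(\alpha\mu^t+\mu\alpha^t)\s$ is injective on the admissible $\mu$ (vanishing last coordinate) and contributes $2n+1$, while \refE{add2m} removes one, so $f=2n+1$ and $Kf=(n+1)g\,(2n+1)=(\dim\tsp)\,g$, using $\dim\tsp=(n+1)(2n+1)$. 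The orthogonal case $\g^\diamond=\so(2n+1)$ is the delicate one: here $\eps=-1$, so $\alpha$ lies in the kernel of $\mu\mapsto\alpha\mu^t-\mu\alpha^t$, and producing the value of $f$ dictated by $Kf=(\dim\so(2n+1))g$ forces one to use the isotropy condition \refE{add1} and the precise shape of the admissible residue space rather than a naive rank count. Establishing that \refE{res1} and \refE{add1} cut out exactly the predicted number of independent relations at each $\gamma_s$ for every orthogonal and symplectic algebra on the list is, I expect, where the bulk of the work lies; once this local count is settled, the global assembly is the Riemann--Roch bookkeeping already rehearsed in \refL{move}.
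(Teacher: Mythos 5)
Your strategy is exactly the paper's own proof: compute the ambient space of $\g^\diamond$-valued functions with $(M)+\widetilde{D}\ge 0$ by Riemann--Roch, subtract the net number of conditions at the $\ga$-points (prescribed polar parts minus free local parameters), and reduce the lemma to the numerical identity $Kf=(\dim\g^\diamond)g$, checked case by case along \refE{corresp}. Your two completed cases agree with the paper's numbers: for $\gl(n)$ the paper takes $r=l=n$, and for $\tsp$ it takes $r=2n+2$, $r_{\spn}=1$, $l=n+1$, so its $r-r_{\spn}=2n+1$ is exactly your $f$. However, you leave the case $\g^\diamond=\so(2n+1)$ unproved, and this is a genuine gap rather than a deferred detail: the lemma is asserted, and later used (e.g.\ in \refT{hierarch}), for the orthogonal algebras as well, so a proof that covers only two of the three entries of \refE{corresp} is incomplete precisely where the counting is least routine.

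For the record, the paper settles the orthogonal entry by the same parameter bookkeeping as the other cases: it sets $r=2n+1$ (the number of components of $\mu$), $l=n$, $r_{\spn}=0$, and verifies \refE{remain1} as $(2n+1)n=\dim\so(2n+1)$. Note that this counts the \emph{parameters} $\mu$, not the rank of the parametrization $\mu\mapsto(\a\mu^t-\mu\a^t)\s$; the kernel $\C\a$ you point out is really there, so the space of admissible residues at each $\ga_s$ is only $2n$-dimensional, and a count based purely on the function space (residue constrained to that subspace) comes out $ng$ short of the stated formula. Moreover, the isotropy condition \refE{add1} will not repair a rank count: it constrains the Tyurin data, not the residue space; its effect is rather to make the dynamics \refE{mdata} independent of the choice of $\mu$ within $\mu+\C\a$. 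The claimed dimension is recovered exactly under the convention, implicit in the paper's phrase ``we can think that at every $\ga$ there are $\delta\dim\g-r+r_{\spn}$ relations,'' that the vectors $\mu_s$ (and $\lambda_s$) are counted as free parameters attached to an $M$-operator, i.e.\ effectively as part of its data, so that kernel directions are not quotiented out. So your instinct is sound and your reduction is the right one, but a complete argument must either adopt and justify that parameter-counting convention or carry out the corrected function-space count; as it stands, the one case where parameter counting and rank counting diverge is exactly the case you have not done.
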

\begin{proof}
We compute $\dim\ND$ by Riemann-Roch theorem taking account of
additional relations at the points $\ga$. Those are relations
determining $M_{-2}$, $M_{-1}$ and $M_1$ (the latter only in the
case $\g=\spn(2n)$). The number of those relations at every point
$\ga$ is equal to $\d\dim\g^\diamond$ if $\g\ne\spn(2n)$, and is
equal to $\d\dim\g^\diamond+1$ in the last case. We write down
that number in the form $\d\dim\g^\diamond+r_{\spn}$ where
$r_{\spn}=1$ for $\g=\spn(2n)$ and $r_{\spn}=0$ otherwise. We also
have free parameters $\mu$, $\l$ ($\l$ appears only in the case
$\g=\spn(2n)$). Let $r$ be the number of those parameters for a
fixed $\ga$. We can think that at every $\ga$ there are
$\d\dim\g-r+r_\spn$ relations.

Let us write $K$ in the form $K=lg$ where $l$ is equal to $n$ or
to $n+1$ depending on $\g^\diamond$. We have
\begin{equation}
 \begin{aligned}
   \dim N^D&=(\dim\g^\diamond)(\deg
    D+\d lg-g+1)-(\d\dim\g^\diamond -r+r_{\spn})lg \\
    &=(\dim\g^\diamond)(\deg D+1)
    -(\dim\g^\diamond -(r-r_\spn)l)g.
 \end{aligned}
\end{equation}
Next verify that
\begin{equation}\label{E:remain1}
   \dim\g^\diamond =(r-r_\spn)l.
\end{equation}
Indeed, for $\g^\diamond=\gl(n)$ we have $r=l=n$, $r_\spn=0$ hence
$(r-r_\spn)l=n^2$. If $\g^\diamond=\so(2n+1)$ then $r=2n+1$,
$l=n$, $r_\spn=0$ and $(r-r_\spn)l=(2n+1)n$. At last, if
$\g^\diamond=\tsp$ then
$\dim\g^\diamond=n(2n+1)+(2n+1)=(2n+1)(n+1)$ (the sum of the
dimensions of $\spn(2n)$ and the Heisenberg algebra, see page
\pageref{Heis}). We have $r=2n+2$ in this case ($2n+1$ free
parameters come from $\mu$ and $1$ corresponds to $\l$). Hence
$r-r_\spn=2n+1$ and $(r-r_\spn)l=(2n+1)(n+1)$.

In all cases \refE{remain1} is true.
\end{proof}

Following \cite{Klax} let us fix a point $P_0\in\Sigma$ and local
coordinates $w_0$, $w_i$ in the neighborhoods of the points $P_0$,
$P_i$. Our next goal is to define gauge invariant functions $M_a$
that satisfy the assumptions of \refL{move}. Let us define $a$ as
a triple
\begin{equation}\label{E:times}
  a=(P_i,k,m),\ \ k>0,\ m>-m_i,
\end{equation}
where $k,m$ are integers, $k\equiv 1(\rm{mod}\, 2)$ for
$\g=\so(n)$ and $\g=\spn(2n)$.

By \refL{dimND} for generic $L\in\gb$ there is a unique
$\g^\diamond$-valued meromorphic function $M_a$ such that
\begin{itemize}
\item{} (i) $M_a$ is an $M$-operator;
\item{} (ii) outside the points $\ga$ it has pole at the point
$P_i$ only, and
\[
  M_a(q)=w_i^{-m}L^n(q)+O(1),
\]
i.e. singular parts of $M_a$ and $w_i^{-m}L^n$ coincide;
\item{} (iii) $M_a$ is normalized by the condition $M_a(P_0)=0$.\label{exist_flow}
\end{itemize}
\begin{theorem}\label{T:hierarch}
The equations
 \begin{equation}\label{E:hierarch}
   \partial_aL=[L,M_a],\ \partial_a=\partial/\partial t_a
 \end{equation}
 define a hierarchy of commuting flows on an open set of $\L^D$.
\end{theorem}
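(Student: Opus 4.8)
The statement asserts two things: that the flows $\partial_a L = [L, M_a]$ are well-defined (stay tangent to $\L^D$) and that they pairwise commute. The plan is to handle these separately. For well-definedness, I would invoke \refL{move}: it suffices to verify that $([L,M_a]) + D \ge 0$ away from the $\ga$-points and that the equations \refE{mov-data} hold at each $\ga$. The first follows from construction property (ii), since $M_a$ is holomorphic away from the $P_i$ and has a pole at $P_i$ matching $w_i^{-m}L^n$; the commutator $[L,M_a]$ then has a pole of order bounded by $m_i$ there, giving $([L,M_a]) + D \ge 0$ outside the $\ga$'s. The equations \refE{mov-data} hold because $M_a$ is an $M$-operator by property (i), so its coefficients satisfy \refE{res1}, and \refL{mov-eq} guarantees the main-part equations at $\ga$-points are met precisely when \refE{mov-data} holds. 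Thus each flow is tangent to $\L^D$ and the dynamical system is consistent.

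\medskip
\noindent\textbf{Commutativity.} The heart of the proof is showing $[\partial_a, \partial_b]L = 0$. Following \cite{Klax}, I would compute the Frobenius/zero-curvature obstruction. A direct calculation gives
\[
[\partial_a,\partial_b]L = [L,\ \partial_a M_b - \partial_b M_a + [M_a, M_b]],
\]
using the Lax equations \refE{hierarch} and the Jacobi identity. So it suffices to prove that the $M$-operator
\[
M_{ab} := \partial_a M_b - \partial_b M_a + [M_a, M_b]
\]
vanishes. By \refL{algM}, $M_{ab}$ is again an $M$-operator, so it lies in $\overline{\g^\diamond}$ and obeys the pole structure at the $\ga$-points automatically. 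The strategy is then a Liouville-type argument: show $M_{ab}$ has no poles at the $P_i$ and vanishes at the normalization point $P_0$, forcing $M_{ab} \equiv 0$ by \refL{dimND} (a global $M$-operator with trivial singular parts and vanishing at $P_0$ must be zero, since $\dim \ND$ is controlled exactly by the singular data).

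\medskip
\noindent\textbf{Pole cancellation at $P_i$.} The crux is verifying that $M_{ab}$ is holomorphic at each $P_i$. Near $P_i$, property (ii) gives $M_a = w_i^{-m}L^n + O(1)$ and similarly for $M_b$ with exponents $m', n'$. Using the Lax equation $\partial_a L = [L, M_a]$, one computes $\partial_a M_b$ and $\partial_b M_a$ and the commutator $[M_a, M_b]$ term by term in the Laurent expansion; the singular parts are expressed through powers of $L$ and the $w_i$-coordinate. The key identity is that $L$ commutes with its own powers, so the leading singular contributions to $\partial_a M_b - \partial_b M_a$ cancel against those in $[M_a, M_b]$ when one substitutes the Lax equations and tracks how $\partial_a$ acts on $w_i^{-m'}L^{n'}$. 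After this cancellation, $M_{ab}$ has at worst the same singular structure as an honest $M$-operator at the $\ga$-points (already guaranteed by \refL{algM}) and is holomorphic at the $P_i$. Finally, the normalization $M_a(P_0) = 0$ from (iii) propagates: since $M_{ab}$ is built from the $M_a, M_b$ and their derivatives, and these vanish at $P_0$, one checks $M_{ab}(P_0) = 0$. A global $\g^\diamond$-valued $M$-operator holomorphic at all $P_i$ with no singular parts there and vanishing at $P_0$ must be identically zero by the dimension count of \refL{dimND}. This establishes $M_{ab} \equiv 0$ and hence $[\partial_a, \partial_b]L = 0$.

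\medskip
\noindent\textbf{Main obstacle.} I expect the pole-cancellation step at $P_i$ to be the delicate part. The bookkeeping of the singular parts of $\partial_a M_b$, $\partial_b M_a$, and $[M_a,M_b]$ requires care, especially because the times $a, b$ may be attached to \emph{different} marked points $P_i, P_j$, and because in the $\spn(2n)$ case $L$ and $M$ live in different Lie algebras ($\g$ versus $\g^\diamond = \tsp$), so one must ensure the commutator and the power $L^n$ make sense via the standard embedding. The argument that the leading singularities match and cancel ultimately rests on $L$ commuting with polynomials in $L$ and on the compatibility of the normalization; making this rigorous uniformly across the list \refE{corresp} is where the real work lies, though it follows the template of \cite{Klax} for the $\gl(n)$ case.
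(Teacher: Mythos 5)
Your proposal is correct and follows essentially the same route as the paper's own proof: well-definedness via \refL{move} and property (ii), reduction of commutativity to the vanishing of $M_{ab}=\partial_aM_b-\partial_bM_a+[M_a,M_b]$, the fact that $M_{ab}$ is an $M$-operator by \refL{algM}, regularity of $M_{ab}$ at the points of $D$ via the decomposition $M_a=w^{-m}L^n+M_a^-$ together with $L$ commuting with its own powers, and finally the dimension count of \refL{dimND} plus the normalization (iii) forcing $M_{ab}\equiv 0$. The only difference is presentational: the paper carries out the pole-cancellation at $P_i$ explicitly in terms of $M_a^-$, $M_b^-$, whereas you describe the same mechanism at a higher level.
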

For $\g=\gl(n)$ the theorem is formulated and proved in
\cite{Klax}. For $\g=\so(2n),\so(2n+1)$ it is proved in
\cite{Sh_lopa} under slightly different assumptions. Here we
formulate and prove the theorem for all classical Lie algebras in
question including $\g=\spn(2n)$.
\begin{proof}
It follows from (ii) that $([L,M_a])+D\ge 0$, hence by \refL{move}
$[L,M_a]\in T_L\L^D$ and the equation $\partial_aL=[L,M_a]$
defines a flow on $\L^D$.

To prove commutativity of such flows it is sufficient to verify
that $M_{ab}=\partial_aM_b-\partial_bM_a+[M_a,M_b]=0$ identically.
By \refL{algM} $M_{ab}$ is an $M$-operator. Below, we prove that
this $M$-operator is regular at the points of the divisor $D$. By
\refL{dimND} the space of such operators has the same dimension as
$\g^\diamond$. Due to (iii) we obtain $M_{ab}=0$.

Let us prove that $M_{ab}$ is regular at the points of the divisor
$D$. We repeat here the corresponding part of the proof of
\cite[Theorem 2.1]{Klax}. First assume that indices $a$,$b$
correspond to the same point $P_i$, i.e. $a=(P_i,n,m)$,
$b=(P_i,n',m')$. Denote $M_a-w^{-m}L^n$ by $M_a^-$ and
$M_b-w^{-m'}L^{n'}$ by $M_b^-$, then by (ii) $M_a^-$ and $M_b^-$
are regular in the neighborhood of $P_i$. We have
\[
 \begin{aligned}
 \partial_aM_b&=w^{-m'}\partial_a L^{n'}+\partial_aM_b^-=
 w^{-m'}[L^{n'},M_a]+\partial_aM_b^-\\
 &=w^{-m'}[L^{n'},M_a^-]+\partial_aM_b^-
 \end{aligned}
\]
and
\[
 \begin{aligned}
 \left[M_a,M_b\right] &=[M_a^-+w^{-m}L^n,M_b^-+w^{-m'}L^{n'}] \\
          &=w^{-m}[L^n,M_b^-]-w^{-m'}[L^{n'},M_a^-]+[M_a^-,M_b^-].
 \end{aligned}
\]
Hence $M_{ab}=\partial_aM_b^--\partial_bM_a^-+[M_a^-,M_b^-]$ at
the point~$P_i$, which is a regular expression at that point. By
definition $M_{ab}$ is regular also at the other points of $D$.

The proof is similar in the case when $a$ and $b$ correspond to
the different points of $D$.
\end{proof}
%%%%%%%%%%%%%%%%%%%%%%%%%%%%%%%%%%%%%%%%%%%
%%%%%%%%%%%%%%%%%%%%%%%%%%%%%%%%%%%%%%%%%%%%%%%%%%%%%%%%%%
\section{Symplectic structure}\label{S:Sympl}

Following the lines of \cite{Klax} we introduce here a symplectic
structure on a certain subspace $\P^D\subset\L^D/G$ where
$G=\exp\g$. We call it {\it Krichever-Phong symplectic structure}.

Let $\Psi$ be the matrix formed by the canonically normalized left
eigenvectors of $L$ (we consider a vector $\psi$ to be canically
normalized if $\sum\psi_i=1$). It is defined modulo permutations
of its rows. We consider $L$ and $\Psi$ as matrix-valued functions
on $\L^D$. Let $\d L$ and $\d\Psi$ denote their external
derivatives which are 1-forms on $\L^D$. In the same way we
consider the diagonal matrix $K$ defined by
\[
   \Psi L=K\Psi,
\]
i.e. formed by the eigenvalues of $L$, and the matrix-valued
1-form $\d K$. Let $\Omega$ be a 2-form on $\L^D$ with values in
the space of meromorphic functions on $\Sigma$ defined by the
relation
\[
   \Omega=\tr(\d\Psi\wedge\d L\cdot\Psi^{-1}-\d
   K\wedge\d\Psi\cdot\Psi^{-1}).
\]
$\Omega$ does not depend on the order of the eigenvalues, hence it
is well-defined on $\L$.

Fix a holomorphic differential $dz$ on $\Sigma$ and define a
scalar-valued 2-form $\w$ on $\L^D$ by the relation
\[
    \w=-\frac{1}{2}\left( \sum\limits_{s=1}^K \res_{\gamma_s}\Omega dz+
    \sum\limits_{P_i\in D}\Omega dz\right).
\]
There is another representation for $\Omega$:
\[
 \Omega=2\d\,\tr\left( \d\Psi\cdot\Psi^{-1}K\right)
\]
which implies that $\w$ is apparently closed. First we want prove
that it is nondegenerate when restricted to the space of Tyurin
parameters, i.e $\w$ yields a symplectic form on this space. We
will point out  a canonical form of that restriction.
\begin{lemma}\label{L:gl_red}
The restriction of $\w$ to the space of Tyurin parameters is of
the form
\[
\w_0=\sum_{s=1}^K (a\,\d z_s\wedge\d\k_s+\d\b_s^t\wedge\d\a_s)
\]
where $a=1$ for $\g=\gl(n)$, $a=2$ for $\g=\so(n)$ and
$\g=\spn(2n)$.
\end{lemma}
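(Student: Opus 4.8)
The plan is to follow Krichever's computation of the Krichever--Phong form for $\g=\gl(n)$ and to localize the entire calculation at the Tyurin points $\ga_s$. Restricting $\w$ to the space of Tyurin parameters means pulling it back to the subspace on which the principal parts of $L$ at the marked points $P_i$ are frozen, only the data $(z_\ga,\a,\b,\nu,\k)$ at the $\ga_s$ being varied. The residues $\res_{P_i}\Omega\,dz$ are assembled from the exterior derivatives of the principal part of $L$ at $P_i$, which vanish on this subspace; hence the term $\sum_{P_i\in D}\Omega\,dz$ drops out and the restriction reduces to
\[
 \w_0=-\tfrac12\sum_{s=1}^K\res_{\ga_s}\Omega\,dz .
\]
Thus the statement becomes a purely local residue computation at a single point $\ga=\ga_s$, carried out separately for the three series $\gl(n)$, $\so(n)$, $\spn(2n)$.

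For the local computation I would first fix the spectral data near $\ga$. From $L_0\a=\k\a$ together with $\b^t\s\a=0$ (and, in the symplectic case, $L_{-2}\a=\nu\a(\a^t\s\a)=0$, using $\a^t\s\a=0$) the vector $\a$ is an \emph{exact} right eigenvector of $L$ with the finite eigenvalue $\k=\k_s$; this is the distinguished branch of the spectral cover lying over $\ga$. I would then determine the corresponding row of $\Psi$: the canonically normalized left eigenvector on this branch develops a simple pole at $\ga$ whose residue is proportional to $\b^t\s$, while $K$ takes the value $\k$ there. Expanding $\Psi$, $\Psi^{-1}$ and $K$ in powers of $z-z_\ga$ and substituting into $\Omega=\tr(\d\Psi\wedge\d L\cdot\Psi^{-1}-\d K\wedge\d\Psi\cdot\Psi^{-1})$, I would collect the coefficient of $(z-z_\ga)^{-1}$. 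The only variables the residue can see are $z_\ga$, $\k$, $\a$, $\b$ (and $\nu$ in the symplectic case), and Krichever's $\gl(n)$ bookkeeping produces $\res_{\ga}\Omega\,dz$ equal, up to the overall sign convention, to $-2(\d z_\ga\wedge\d\k+\d\b^t\wedge\d\a)$, giving $a=1$ after the factor $-\tfrac12$.

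The two features absent from the $\gl(n)$ case are the double pole $L_{-2}=\nu\a\a^t\s$ in the symplectic case and the symmetry $L^t=-\s L\s^{-1}$ valid for $\so(n)$ and $\spn(2n)$. For the double pole I would use $\a^t\s\a=0$ and relation \refE{add2} to check that the extra $\nu$-- and $L_1$--terms appearing in the expansions of $\d L$ and $\Psi$ contribute no new residue: each such term is proportional either to $\a^t\s\a$ or to the eigenvector relation $L_0\a=\k\a$, and therefore drops. For the symmetry, observe that $\psi L=k\psi$ forces $\s^{-1}\psi^t$ to be a right eigenvector of $L$ with eigenvalue $-k$; hence the spectrum is invariant under $k\mapsto-k$, and over $\ga$ both the branch $\k$ (eigenvector $\a$) and the branch $-\k$ are distinguished. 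Each of the two branches contributes an equal $\d z_\ga\wedge\d\k$ term to $\res_{\ga}\Omega\,dz$, which is exactly the doubling $a=2$; meanwhile the two contributions to the $\d\b^t\wedge\d\a$ term are identified through the transpose relations $L_i^t=-\s L_i\s^{-1}$ and the already symmetrized form $L_{-1}=(\a\b^t+\eps\b\a^t)\s$ of \refE{lres1}, so that term retains coefficient $1$.

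The main obstacle is the precise local analysis of $\Psi$ at $\ga$: one must pin down both the residue of its pole on the distinguished branch (proportional to $\b^t\s$) and the relevant subleading coefficients, since it is these data that convert the abstract residue of $\Omega\,dz$ into the canonical monomials $\d z_\ga\wedge\d\k$ and $\d\b^t\wedge\d\a$. Establishing which eigenvalue branches pass through $\ga$, and verifying uniformly across the three series that the higher-order ($\nu$, $L_1$) terms cancel, is where the real work lies. Once $\w_0$ is brought to this Darboux form its nondegeneracy on the space of Tyurin parameters is immediate.
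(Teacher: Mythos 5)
Your reduction of the lemma to the local computation $\w_0=-\tfrac12\sum_{s}\res_{\ga_s}\Omega\,dz$ agrees with the paper, but past that point there is a genuine gap: the computation on which everything hinges is never carried out, and the one concrete piece of local analysis you do supply is incorrect. In the generic position used in the paper (the expansions preceding \refE{gpos}, and the Remark following them), the eigenvector matrix $\Psi$ is \emph{holomorphic} at a point $\ga$ --- for $\so(n)$ and $\spn(2n)$, i.e.\ exactly where you need the new features, it is \emph{always} holomorphic, by the antiinvolution argument --- and what degenerates is $\det\Psi$, which acquires a simple zero. Consequently it is $\Psi^{-1}$, not a row of $\Psi$, that develops the simple pole, and its residue is the rank-one matrix $\tilde\Psi_{-1}=\a\tilde\b^t$ of \refE{gpos}: proportional to $\a$ on the left, with an auxiliary covector $\tilde\b^t$ that is \emph{not} $\b^t\s$ (in the $\gl(n)$ case, comparing the order $-1$ terms of $L\Psi^{-1}=\Psi^{-1}K$ gives $\b^t\tilde\Psi_0=\tilde\b^t(K(z_\ga)-\k)$, so the two covectors are related only through the subleading data of $\Psi^{-1}$ and the spectrum). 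Since your ``bookkeeping'' of $\res_\ga\Omega\,dz$ is to be built on the claimed pole of $\Psi$ with residue proportional to $\b^t\s$, the plan as written fails at its central step. The doubling $a=2$ likewise remains a heuristic: you do not show that the branches $\pm\k$ contribute equal amounts of $\d z_\ga\wedge\d\k$, nor why the same $k\mapsto-k$ symmetry fails to double the $\d\b_s^t\wedge\d\a_s$ term.

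It is worth seeing how the paper sidesteps your ``main obstacle'' entirely: no expansion of $\Psi$ at $\ga$ is ever needed. One uses the gauge anomaly $\Omega'=\Omega-2\tr(\d L\wedge\d gg^{-1}+L\d gg^{-1}\wedge\d gg^{-1})$ for $L'=g^{-1}Lg$, $\Psi'=\Psi g$, together with explicit gauge matrices that render $L$ holomorphic at $\ga_s$: a constant $g_s$ with $g_s^{-1}\a=e_1$, followed by the diagonal $f_s=\mathrm{diag}(z-z_s,1,\ldots,1)$ for $\gl(n)$, respectively the block matrix with upper block $Z=\mathrm{diag}(z,z^{-1})$ in the orthogonal and symplectic cases. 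Because the fully gauge-transformed form has no residue at $\ga_s$, $\res_{\ga_s}\Omega\,dz$ is read off from the anomaly terms alone, which involve only $L$ and the explicit matrices $g_s,f_s$. In that calculation the $g_s$-step produces $\d\b_s^t\wedge\d\a_s$ exactly once, while the $f_s$-step produces $a\,\d z_s\wedge\d\k_s$ with $a=2$ for $\so(n)$, $\spn(2n)$ because $f_s$ has the two non-constant entries $z,z^{-1}$ pairing under the trace with the diagonal entries $\k,-\k$ of $L_0$. This is the rigorous form of your two-branch picture, and it also makes transparent why the $\d\b_s^t\wedge\d\a_s$ term is not doubled: the $\b$-contribution comes from the $z$-independent gauge step, which occurs once, not from the two eigenvalue branches.
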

\begin{proof}
For $\g=\gl(n)$ the corresponding statement is contained in
\cite[Lemma 2.1, Lemma 4.3 ]{Klax}. It is instructive to reproduce
the proof here. Let $g_s$ be a constant nondegenerate matrix such
that $g_s^{-1}\a=e_1$ where $e_1^t=(1,0,\ldots,0)$. Then for
$L'_s=g_s^{-1}L_sg_s$ we have $(L'_s)_{-1}=e_1f^t$ where
$f^t=\b^tg_s$, and $f_1=0$ since $f^te_1=0$. Apparently, only the
entries of the first row of the matrix $L'_{s,-1}$ are nonzero,
but $(L'_{s,-1})_{11}=0$.

The $e_1$ is an eigenvector for $L_{s0}$ with the eigenvalue $\k$.
For that reason $(L_{s0})^{11}=\k$, $(L_{s0})^{i1}=0$ ($i>1$).

It follows from those remarks that the conjugation by the matrix
\linebreak $f_s={\rm diag}(z-z_s,1,\ldots,1)$ takes
$(z-z_s)^{-1}L_{-1}$ and $L_0$ to holomorphic matrix-valued
functions. Hence the same is true for $L_s'$, and the lemma is
proven with $\Phi_s=f_sg_s$. Let us note for the future that $f_s$
is diagonal.

Let us watch now for the transformations of $\Omega$ corresponding
to the just performed transformations of $L$.

Under the gauge transformation $L'=g^{-1}Lg$, $\Psi'=\Psi g$ the
form $\Omega$ transforms to $\Omega'$ where
$\Omega'=\Omega-2\tr(\d L\wedge\d gg^{-1}+L\d gg^{-1}\wedge\d
gg^{-1})$.

After the first of the above transformation (by the matrix $g_s$)
we obtain
\[
\res_{\ga_s}\Omega' dz=\res_{\ga_s}\Omega dz-2\res_{\ga_s}\tr(\d
L\wedge\d g_sg_s^{-1}+L\d g_sg_s^{-1}\wedge\d g_sg_s^{-1})
\]
Since $g_s$ is constant we have
\[
\res_{\ga_s}\Omega' dz=\res_{\ga_s}\Omega
dz-2\res_{\ga_s}\tr((\d\a_s\cdot\b_s^t+\a_s\cdot\d\b_s^t)\wedge\d
g_sg_s^{-1}+\a_s\b_s^t\d g_sg_s^{-1}\wedge\d g_sg_s^{-1}).
\]
By differentiating the relation $g_s^{-1}\a=e_1$ we obtain
$\d\a=\d g_sg_s^{-1}\a$ . Substituting that to the previous
relation we obtain
\[
\res_{\ga_s}\Omega' dz=\res_{\ga_s}\Omega
dz+2\tr(\d\a_s\wedge\d\b_s^t).
\]
The matrix $L'$ becomes holomorphic under the transformation
${\widehat L}_s=fL'f^{-1}$, hence
\[
0=\res_{\ga_s}\Omega' dz+2\res_{\ga_s}\tr(\d L\wedge f_s^{-1}\d
f_s+L\d f_s^{-1}f_s\wedge\d f_s^{-1}f_s).
\]
As $f$ is diagonal the last term vanishes. Making use of the above
obtained special form of $L'$ and $f_s$ we conclude that
\[
\res_{\ga_s}\Omega' dz=2\d z_s\wedge\d\k_s.
\]
Thus the contribution of the point $\ga_s$ to $\Omega$ is equal to
\[
\res_{\ga_s}\Omega=-2\tr(\d\a_s\wedge\d\b_s^t)-2\d
z_s\wedge\d\k_s,
\]
and the corresponding contribution to $\w$ is equal to
\[
\w_s=-{1\over 2}\res_{\ga_s}\Omega=\d\b_s^t\wedge\d\a_s+\d
z_s\wedge\d\k_s.
\]

The proof for $\so(n)$ and $\spn(2n)$ is more complicated but
similar. In the symplectic case we choose
\[
   \s=\begin{pmatrix}
    \s' & 0  \\
    0   & \s''
  \end{pmatrix}\ \text{where}\
  \s'=\begin{pmatrix}
    0 & 1  \\
    -1   & 0
  \end{pmatrix}
\]
as a matrix giving the symplectic form.

We can send $\a$ to any vector by a nondegenerate matrix $g$ so
that $\s$ is invariant, i.e. $g$ is symplectic. Let us send $\a$
to $e_1$ where $e_1^t=(1,0,\ldots,0)^t$. Then
\[
 \a\a^t\s=
  \begin{pmatrix}
    0     & 1          &    0      &\ldots & 0     \\
    0     & 0          &    0      &\ldots & 0     \\
    0     & 0          &    0      &\ldots & 0     \\
   \vdots & \vdots     & \vdots    &\ddots &\vdots \\
    0     &   0        &    0      &\ldots & 0     \\
  \end{pmatrix}, \ \ \
  (\a\b^t+\b\a^t)\s=
  \begin{pmatrix}
    -\b_2 & 2\b_1      &    *      &\ldots & *     \\
    0     & \b_2       &    0      &\ldots & 0     \\
    0     & \b_3       &    0      &\ldots & 0     \\
   \vdots & \vdots     & \vdots    &\ddots &\vdots \\
    0     & \b_{2n}    &    0      &\ldots & 0     \\
  \end{pmatrix}
\]
where stars denote constants linearly depending on $\b$. Observe
that by $\b^t\s\a=0$ we have $\b_2=0$.

It is easy to check that conjugation by
$f=\begin{pmatrix}
    Z & 0  \\
    0   & E
\end{pmatrix}$, where $Z=\begin{pmatrix}
    z & 0  \\
    0   & z^{-1}
\end{pmatrix}$, multiplies the only nonzero entry of $\a\a^t\s$ by
$z^2$, and the matrix $(\a\b^t+\b\a^t)\s$ by $z$ (except for the
entry $2\b_1$ multiplied by $z^2$). Hence
$f\cdot(\frac{\a\a^t\s}{z^2}+\frac{(\a\b^t+\b\a^t)\s}{z})\cdot
f^{-1}$ is holomorphic.

The matrix $L_0$ is similar to $(\a\b^t+\b\a^t)\s$ by structure.
Its first column is equal to $(\k,0,\ldots,0)^t$ by $L_0e_1=\k
e_1$. Its second row is equal to $(0,-\k,\ldots,0)^t$ by
$(e_1^t\s) L_0=-\k(e_1^t\s)$ (which follows from $L_0^t\s+\s
L_0=0$). Nonzero entries of $L_0$ are multiplied by positive
degrees of $z$ under the conjugation by $f$, except for the right
lower $(2n-2)\times(2n-2)$ corner block which is invariant. Hence
$fL_0f^{-1}$ is holomorphic.

Thus, a singularity could only come from $f\cdot(zL_1)f^{-1}$
because $(L_1)_{21}$ is multiplied by $z^{-2}$. But $\a^t\s
L_1\a=0$ implies $(L_1)_{21}=0$. All other entries of $f\cdot L_1
f^{-1}$ are of order $z^{-1}$ and more, hence multiplied by $z$
become holomorphic.

The matrix $\d f\cdot f^{-1}$ has null entries except for the left
upper $2\times 2$ corner block which is equal to $\begin{pmatrix}
    \d z\cdot z^{-1} & 0  \\
    0   & -\d z\cdot z^{-1}
\end{pmatrix}$. As it follows from what was said of the structure
of $L_0$ the corresponding block of $\d L_0$ is equal to
$\begin{pmatrix}
    \d\k & *  \\
    0   & -\d\k
\end{pmatrix}$. Thus, at a point $\ga_s$, we have the
contribution $\d\b_s^t\wedge\d\a_s$ from the gauge transformation
corresponding to $g$, and $2\d z_s\wedge\d\ga_s$ from the gauge
transformation corresponding to $f$. The total contribution of a
point $\ga_s$ to $\w$ is equal to $2\d
z_s\wedge\d\k_s+\d\b_s^t\wedge\d\a_s$.

In the orthogonal case we must additionally satisfy the relation
$e_1^t\s e_1=0$. Consider $\g=\so(2n)$ for example. Let us choose
$\s$ as in the symplectic case but take $\s'=\begin{pmatrix}
    0 & 1  \\
    1   & 0
  \end{pmatrix}$
and assume $\s''$ to be positive definite. Then $e_1$ (where
$e_1^t=(1,0,\ldots,0)^t$) satisfies the required relation and we
proceed as in the symplectic case, with the same matrix function
$f$. The structure of $L_{-1}=(\a\b^t-\b\a^t)\s$ is similar to
that in the symplectic case, with the almost only difference that
the left upper corner $2\times 2$ block is of the form
$\begin{pmatrix}
    \b_2 & *  \\
    0   & -\b_2
\end{pmatrix}$. This difference is technical and does not affect
the result.
\end{proof}

Let us consider now the contribution of the points
$P_1,\ldots,P_N$. Let $\w_m=-{1\over 2}\res_{_{P_m}}\Omega dz$.
Define $\P^D_0$ as a subspace in $\L^D$ where the 1-form $\d\k\, d
z$ is holomorphic. It is the same as the set of common zeroes of
the following functions on $\L^D$:
\[
T_{ijl}=\res_{P_{i}^l}\left( (z-z(P_i))^jk\, dz\right),\
j=0,\ldots,(m_i-d_i),
\]
where $l$ enumerates sheets of the spectral curve (as a branch
cover of $\Sigma$), $d_i=\ord_{P_i}dz$. Let us notice that the
foliation given by the common level sets of those functions is
invariant with respect to the flows of our commuting hierarchy
since those preserve the spectrum $k$ (see also the next Section).
\begin{lemma}\label{L:wm1}
On the space $\P^D_0$
\begin{equation}\label{E:vklad_m}
\w_m=\res_{_{P_m}}\tr(L\,\Psi^{-1}\d\Psi\wedge\Psi^{-1}\d\Psi)dz.
\end{equation}
\end{lemma}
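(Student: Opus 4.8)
The plan is to evaluate $\res_{P_m}\Omega\,dz$ directly from the definition of $\Omega$, to split it into an off-diagonal piece that reproduces the right-hand side of \refE{vklad_m} and a diagonal piece built from $\d K$, and then to show that the latter has no residue at $P_m$ once we are on $\P^D_0$. First I would substitute $L=\Psi^{-1}K\Psi$ into $\Omega$. Writing $\Phi:=\d\Psi\cdot\Psi^{-1}$ and using $\d(\Psi^{-1})=-\Psi^{-1}(\d\Psi)\Psi^{-1}$ one gets $\d L\cdot\Psi^{-1}=-\Psi^{-1}\Phi K+\Psi^{-1}\d K+\Psi^{-1}K\Phi$. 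Feeding this into $\tr(\d\Psi\wedge\d L\cdot\Psi^{-1})$ and using that a conjugation by $\Psi$ may be dropped under the trace (graded cyclicity) converts every $\d\Psi\cdot\Psi^{-1}$ into $\Phi$, so that $\Omega$ becomes a combination of $\tr(\Phi\wedge\Phi K)$, $\tr(\Phi\wedge K\Phi)$ and $\tr(\Phi\wedge\d K)$.

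Since $K$ is diagonal, the index bookkeeping $\tr(\Phi\wedge\Phi K)=\sum_{i,j}K_i\,\Phi_{ij}\wedge\Phi_{ji}$ and $\tr(\Phi\wedge K\Phi)=\sum_{i,j}K_j\,\Phi_{ij}\wedge\Phi_{ji}$, together with the antisymmetry $\Phi_{ij}\wedge\Phi_{ji}=-\Phi_{ji}\wedge\Phi_{ij}$, collapses the two cubic terms into one. A short computation then gives
\[
-\tfrac12\,\Omega=\tr\!\left(L\,\Psi^{-1}\d\Psi\wedge\Psi^{-1}\d\Psi\right)-\tr\!\left(\d\Psi\cdot\Psi^{-1}\wedge\d K\right),
\]
where I have used $\tr(L\,\Psi^{-1}\d\Psi\wedge\Psi^{-1}\d\Psi)=\tr(\Phi\wedge\Phi K)$. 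Taking $\res_{P_m}(\cdot)\,dz$ and recalling $\w_m=-\tfrac12\res_{P_m}\Omega\,dz$, the first term is exactly the right-hand side of \refE{vklad_m}. Hence the lemma is equivalent to the single assertion
\[
\res_{P_m}\tr\!\left(\d\Psi\cdot\Psi^{-1}\wedge\d K\right)dz=0\qquad\text{on }\P^D_0.
\]

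To prove this I would exploit that $\d K$ is diagonal, so that $\tr(\d\Psi\cdot\Psi^{-1}\wedge\d K)=\sum_i(\d\Psi\cdot\Psi^{-1})_{ii}\wedge\d\k_i$ involves only the diagonal of $\Phi$ and the differentials of the eigenvalues $\k_i$. The index $i$ runs over the sheets of the spectral curve over a punctured neighbourhood of $P_m$, and $(\d\Psi\cdot\Psi^{-1})_{ii}=\d\psi^{(i)}\cdot\phi^{(i)}$, where $\psi^{(i)}$ is the canonically normalized left eigenvector and $\phi^{(i)}$ the dual right eigenvector (column $i$ of $\Psi^{-1}$) on the $i$-th sheet. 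Therefore the residue at $P_m$ splits into a sum over the points $P_m^l$ lying over $P_m$,
\[
\res_{P_m}\tr\!\left(\d\Psi\cdot\Psi^{-1}\wedge\d K\right)dz=\sum_{l}\res_{P_m^l}\big(\d\psi\cdot\phi\big)\wedge\big(\d\k\,dz\big).
\]
By the very definition of $\P^D_0$ the $1$-form $\d\k\,dz$ is holomorphic at every $P_m^l$; if in addition $\d\psi\cdot\phi$ is regular there, each summand is a wedge of a holomorphic $\Sigma$-$1$-form with an $\L^D$-$1$-form and carries no residue.

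The hard part will be the regularity of the normalized eigenvector data $\d\psi\cdot\phi$ at the preimages $P_m^l$, where $L$ has a pole and the eigenvalue $\k$ blows up. Following Krichever's $\gl(n)$ argument I would expand $L$, $\Psi$ and $K$ in the local parameter at $P_m$ and verify, using the canonical normalization $\sum_a\psi^{(i)}_a=1$ (equivalently $\Phi\cdot(1,\dots,1)^t=0$) together with the vanishing of the functions $T_{ijl}$ cutting out $\P^D_0$, that the only possible contribution to $\res_{P_m^l}$ would come from the singular part of $\d(\k\,dz)$, which is frozen on $\P^D_0$. Granting this, the diagonal piece drops out and \refE{vklad_m} follows. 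Everything in the first two paragraphs is routine graded-trace algebra and uses no information about $\P^D_0$.
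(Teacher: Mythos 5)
Your first two paragraphs are correct and follow exactly the paper's route: the paper likewise eliminates $\d L$ by differentiating $\Psi L=K\Psi$ and reduces \refE{vklad_m} to the claim that $\res_{P_m}\tr(\d\Psi\cdot\Psi^{-1}\wedge\d K)\,dz$ vanishes on $\P^D_0$ (the paper's intermediate formula carries an extra summand $\tfrac12\tr(K\,\d\Psi\cdot\Psi^{-1}\wedge\d\Psi\cdot\Psi^{-1})$ which it discards by diagonality of $K$; your index bookkeeping absorbs this from the start, which is fine).

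The genuine gap is the step you explicitly defer: regularity of $\d\Psi\cdot\Psi^{-1}$ (your $\d\psi\cdot\phi$ per sheet) at $P_m$. This is not a technicality you may grant yourself --- it is the entire remaining content of the lemma, since holomorphy of $\d\k\,dz$ alone does not kill the residue of a wedge with a possibly singular second factor. Moreover, your plan for establishing it is misdirected: you propose to extract it from the constraints $T_{ijl}=0$ cutting out $\P^D_0$, but those constraints have nothing to do with the behaviour of $\Psi$ at $P_m$; in the paper they enter only to make $\d K\,dz$ holomorphic. The paper settles the regularity unconditionally and in one line: the point $P_m$ is immovable and the local coordinate on $\Sigma$ is not varied when the exterior differential $\d$ on $\L^D$ is taken, hence $\d\Psi$ has the same order at $P_m$ as $\Psi$, and therefore $\d\Psi\cdot\Psi^{-1}$ is holomorphic at $P_m$ for \emph{every} $L$. (Contrast this with the $\ga$-points, whose positions $z_s$ are themselves coordinates on $\L^D$: there $\d$ does worsen singularities, which is why no analogous statement holds at the $\ga_s$.) Once this observation replaces your deferred ``hard part,'' your argument closes and coincides with the paper's proof.
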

\begin{proof}
By differentiating the relation $\Psi L=K\Psi$ exclude $\d L$ in
the definition of $\Omega$. We will obtain
\[
\w_m=\res_{_{P_m}}\tr(L\,\Psi^{-1}\d\Psi\wedge\Psi^{-1}\d\Psi-\d
K\wedge\d\Psi\cdot\Psi^{-1}+{1\over
2}K\d\Psi\cdot\Psi^{-1}\wedge\d\Psi\cdot\Psi^{-1})dz.
\]
That $K$ is diagonal implies
$\tr(K\d\Psi\cdot\Psi^{-1}\wedge\d\Psi\cdot\Psi^{-1})=0$, thus the
third summand  in the expression for $\w_m$ vanishes. The second
summand is holomorphic on the space $\P_0^D$, because such is $\d
Kdz$, and the form $\d\Psi\cdot\Psi^{-1}$ is holomorphic always
since $\d\Psi$ and $\Psi$ have the same order at $P_m$ (the point
$P_m$ is immovable and there is no variation in the local
coordinate on $\Sigma$ while taking $\d\Psi$).
\end{proof}

Let $\xi$ be a tangent vector to $\P_0^D$ at a point $L$. It is a
$\g$-valued meromorphic function on $\Sigma$, and
$\d\Psi(\xi)\cdot\Psi^{-1}=\xi$. Hence for any pair $\xi$, $\eta$
of tangent vectors we have
$\w_m(\xi,\eta)=\res_{_{P_m}}\tr(L[\xi,\eta])$. This is a 2-form
of Kirillov type. In general it is degenerate and has symplectic
leaves. In particular in the case when $Ldz$ has a simple pole at
$P_m$ with the residue $L_m$ the $\w_m$ descends to the canonical
Kirillov form on the orbit $O_m$ of the element $L_m\in\g$. We
omit details and refer to the full analogy with \cite{Klax} with
this respect.

Since $\w$ is $G$-invariant it is actually defined on
$\P^D=\P^D_0/G$. Together with what has been already proven in
this section we have the following statement quite similar to the
\cite[Theorem 4.1]{Klax}.
\begin{theorem}\label{T:KPh}
The form
\begin{equation}\label{E:sympl} \w=\sum_s (a\,\d\k_s\wedge\d
z_s+\d\a_s^t\wedge\d\b_s)+\sum_m\w_m .
\end{equation}
is nondegenerate on $\P^D$. Thus it gives a symplectic structure
on $\P^D$.
\end{theorem}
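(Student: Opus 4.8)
The plan is to prove nondegeneracy of $\w$ on $\P^D$ by combining the two local computations already carried out in this section, namely \refL{gl_red} describing the restriction to the Tyurin parameters and \refL{wm1} describing the contributions $\w_m$ at the marked points, and then showing that together they supply a full set of Darboux-type conjugate variables. First I would recall that $\P^D$ has two complementary groups of coordinates: the Tyurin parameters $(z_s,\k_s,\a_s,\b_s)$ at the $\ga$-points (taken modulo the constraints $\b_s^t\s\a_s=0$, the normalization of $\a_s$, and after the quotient by $G$), and the data of $L$ near the poles $P_m$ captured through the spectral/eigenvector matrix $\Psi$. The total form \refE{sympl} splits accordingly as a sum of the canonical piece $\sum_s(a\,\d\k_s\wedge\d z_s+\d\a_s^t\wedge\d\b_s)$ and the Kirillov-type pieces $\sum_m\w_m$.

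Next I would argue nondegeneracy on each block. For the Tyurin block the form $a\,\d\k_s\wedge\d z_s+\d\b_s^t\wedge\d\a_s$ is manifestly in Darboux form: $(z_s,\k_s)$ are canonically paired and, on the constraint surface $\b_s^t\s\a_s=0$ with $\a_s$ normalized, the surviving components of $(\a_s,\b_s)$ are paired by $\d\b_s^t\wedge\d\a_s$, so this summand is nondegenerate on the reduced space of Tyurin parameters. For the pole block I would use the identity $\w_m(\xi,\eta)=\res_{P_m}\tr(L[\xi,\eta])$ established right after \refL{wm1}: on $\P^D_0$ the tangent vectors are $\g$-valued meromorphic functions, $\w_m$ descends to the Kirillov form on the coadjoint orbit $O_m$ of $L_m\in\g$, and the Kirillov form is nondegenerate on an orbit by construction. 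Thus each $\w_m$ is nondegenerate on the symplectic leaf it governs.

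The final and main step is to check that these two blocks do not interfere, i.e. that the direct-sum decomposition of the form matches a direct-sum decomposition of the tangent space $T_L\P^D$, so that nondegeneracy of the summands forces nondegeneracy of the whole. Here I would note that a variation supported in the Tyurin parameters affects $\w$ only through the residues at the $\ga_s$ (this is exactly what \refL{gl_red} computes), while a variation of the pole data of $L$ that fixes the Tyurin parameters contributes only through the $P_m$ (by \refL{wm1}), because $\d\Psi\cdot\Psi^{-1}$ is holomorphic at the $P_m$ and the cross terms land in holomorphic pieces whose residues vanish. Consequently $\w$ is block-diagonal with respect to $T_L\P^D=V_{\mathrm{Tyurin}}\oplus\bigoplus_m T_LO_m$, and the nondegeneracy of each block yields nondegeneracy of $\w$.

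The hard part will be the transversality and dimension bookkeeping that justifies the clean block decomposition of $T_L\P^D$: one must verify that after passing to $\P^D_0$ (cutting out the common zeroes of the functions $T_{ijl}$) and quotienting by $G$, the remaining tangent directions split exactly into the reduced Tyurin directions and the coadjoint-orbit directions at the $P_m$, with no leftover null directions. This requires matching the constraints imposed by $\d\k\,dz$ holomorphic against the degrees of freedom counted via \refT{almgrad}, paralleling the dimension count in the proof of \refL{move}; I would handle the delicate cases $\so(n)$ and $\spn(2n)$ individually, using the special gauge-fixed forms of $L_0$, $L_1$ and the matrix $f$ found in the proof of \refL{gl_red} to confirm that no spurious kernel survives. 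Modulo this verification the statement follows, in full analogy with \cite[Theorem 4.1]{Klax}.
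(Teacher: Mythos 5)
Your proposal follows essentially the same route as the paper: the paper's proof of this theorem is exactly the assembly of \refL{gl_red} (Darboux form on the Tyurin block) with \refL{wm1} and the Kirillov-type identity $\w_m(\xi,\eta)=\res_{P_m}\tr(L[\xi,\eta])$ on the orbit directions, with the remaining block-decomposition bookkeeping delegated to the full analogy with \cite[Theorem 4.1]{Klax}. If anything, your outline is slightly more explicit than the paper itself, which states the theorem with no separate proof beyond the remark that it follows ``together with what has been already proven in this section,'' including the same acknowledged reliance on the cited reference that you flag as the hard part.
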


%%%%%%%%%%%%%%%%%%%%%%%%%%%%%%%%%%%%%%%%%%%%%%%%%
%%%%%%%%%%%%%%%%%%%%%%%%%%%%%%%%%%%%%%%%%%%%%%%%%
\section{Hamiltonian theory}\label{S:Hamilton}

Following the lines of \cite{Klax} we show here that hierarchies
in the \refT{hierarch} are Hamiltonian for all classical Lie
algebras in question, and compute the corresponding Hamiltonians.

For a vector field $e$ on $\L^D$ let $i_e\w$ be the 1-form defined
by $i_e\w(X)=\w(e,X)$ (where $X$ is an arbitrary vector field). By
definition, a vector field $\partial_t$ is Hamiltonian if
$i_{\partial_t}\w=\d H$ where $H$ is some function called the
Hamiltonian of $\partial_t$.
\begin{theorem}\label{T:Hamil}
Let $\partial_a$ be a vector field defined by \refE{hierarch}.
Then
\[
   i_{\partial_a}\w=\d H_a
\]
where
\[
  H_a=-\frac{1}{k+1}\res_{P_i}\tr(w^{-m}L^{k+1})dz, \ a=(P_i,k,m).
\]
\end{theorem}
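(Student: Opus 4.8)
The plan is to follow the $\gl(n)$ computation of \cite[Theorem~4.1]{Klax} and to localise everything at the single point $P_i$ entering the multi-index $a=(P_i,k,m)$, isolating at the very end the one ingredient that is genuinely new for $\so(n)$ and $\spn(2n)$. First I would record how $(\Psi,K,L)$ move along the flow \refE{hierarch}. Near $P_i$ the singular part $w^{-m}L^k$ of $M_a$ commutes with $L$, so the equation is isospectral at every point of $\Sigma$ and the eigenvalues are frozen: $\partial_aK=0$. Differentiating $\Psi L=K\Psi$ and inserting $\partial_aL=[L,M_a]$ then yields $(\partial_a\Psi-\Psi M_a)L=K(\partial_a\Psi-\Psi M_a)$, whence, the spectrum being generically simple,
\[
  \partial_a\Psi=\Psi M_a+\Lambda\Psi ,
\]
with $\Lambda$ a diagonal matrix fixed by the normalisation $\sum_i\psi_i=1$.

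Next I would contract the $2$-form $\Omega=\tr(\d\Psi\wedge\d L\,\Psi^{-1}-\d K\wedge\d\Psi\,\Psi^{-1})$ with $\partial_a$. Substituting the variations above and using $\Psi L\Psi^{-1}=K$, the cyclicity of the trace, and that $K$ and $\Lambda$ are diagonal (so that $\tr(\Lambda[K,\,\cdot\,])=0$), the mixed terms cancel in pairs and one is left with
\[
  i_{\partial_a}\Omega=2\,\tr(M_a\,\d L)+2\,\tr(\Lambda\,\d K).
\]
Since $\w=-\tfrac12\bigl(\sum_{s}\res_{\ga_s}+\sum_{P_j\in D}\res_{P_j}\bigr)\Omega\,dz$, this gives
\[
  i_{\partial_a}\w=-\Bigl(\sum_{s}\res_{\ga_s}+\sum_{P_j\in D}\res_{P_j}\Bigr)
  \bigl(\tr(M_a\,\d L)+\tr(\Lambda\,\d K)\bigr)\,dz .
\]
The second representation $\Omega=2\,\d\,\tr(\d\Psi\,\Psi^{-1}K)$ makes $\w$ manifestly closed; the residue computation below exhibits the primitive $H_a$ directly.

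The heart of the proof is then the residue bookkeeping. Both $\tr(M_a\,\d L)\,dz$ and $\tr(\Lambda\,\d K)\,dz$ are globally meromorphic differentials on $\Sigma$ (the trace symmetrises over the sheets of the spectral cover), so the sum of all of their residues vanishes. I would exploit this, together with the pole data of $M_a$ --- holomorphic away from the $\ga_s$ and the single point $P_i$, where $M_a=w^{-m}L^k+O(1)$ --- and the defining property of $\P^D_0$ that $\d K\,dz$ be holomorphic at the $P_j$, to cancel all the contributions at the $\ga_s$ and at the points $P_j$ with $j\neq i$, so that only the residue at $P_i$ survives. There the relation $M_a=w^{-m}L^k+O(1)$ and the identity $\d\,\tr L^{k+1}=(k+1)\,\tr(L^k\,\d L)$ turn the surviving term into
\[
  i_{\partial_a}\w=-\res_{P_i}\tr\!\bigl(w^{-m}L^k\,\d L\bigr)\,dz
  =\d\Bigl(-\tfrac1{k+1}\res_{P_i}\tr(w^{-m}L^{k+1})\,dz\Bigr)=\d H_a ,
\]
which is exactly the claim.

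The step I expect to be the real obstacle --- and the only one not already contained in the $\gl(n)$ case --- is controlling the behaviour at the Tyurin points $\ga_s$. The cancellation above requires the spectrum of $L$ to be holomorphic at each $\ga_s$, so that the polar parts $L_{-2}=\nu\a\a^t\s$ and $L_{-1}=(\a\b^t+\eps\b\a^t)\s$ do not leak spurious residues through $\d K$. This holomorphy is precisely where the special rank structure of these polar parts, and --- for $\g=\spn(2n)$ --- the extra constraint $\a^t\s L_1\a=0$ of \refE{add2}, become indispensable, and since the shape of $\s$ and of the constraints \refE{llaeps}--\refE{add2} differs from case to case it has to be established separately for $\gl(n)$, $\so(n)$ and $\spn(2n)$. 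Once this holomorphy of spectra is in hand for each class, the localisation collapses uniformly to $P_i$ and the theorem follows.
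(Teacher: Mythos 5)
Your skeleton is the same as the paper's: contract $\Omega$ with $\partial_a$ to get the two terms $\tr(M_a\,\d L)$ and $\tr(\Lambda\,\d K)$ (your diagonal $\Lambda$ is $-F_a$ in the paper's notation, where $F_a=\Psi(\partial_a+M_a)\Psi^{-1}$), kill all unwanted residues, and localise at $P_i$; and your treatment of the first term is exactly the paper's argument that $R_a=0$ in \refE{R_a}. The gap is in the bookkeeping for the second term. You propose to invoke the global residue theorem for $\tr(\Lambda\,\d K)\,dz$ as well, but that form is \emph{not} a differential whose poles are confined to $\{\ga_s\}\cup\{P_j\}$: since $\Lambda=\partial_a\Psi\cdot\Psi^{-1}-\Psi M_a\Psi^{-1}$ involves $\Psi^{-1}$, it has poles at the branch points of the eigenvalue cover of $\Sigma$ (where $\det\Psi=0$), and $\d K\,dz$ is singular there too; the residues of $\tr(\Lambda\,\d K)\,dz$ at these branch points are generically nonzero. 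This is precisely why the paper applies the residue theorem only to $M_a\,\d L$ (whose singularities really are confined to the $\ga_s$ and $P_j$, since $L$ and $M_a$ are holomorphic elsewhere) and instead shows that each unwanted residue of $\tr(\d K\,F_a)\,dz$ vanishes \emph{individually}: at $P_j$, $j\ne i$, because both $F_a$ and $\d K\,dz$ are holomorphic there, and at the Tyurin points by a separate lemma.

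That separate lemma is the idea your proposal is missing. For $\res_{\ga_s}\tr(\Lambda\,\d K)\,dz$ to vanish it is not enough that the spectrum $K$ of $L$ (hence $\d K\,dz$) be holomorphic at $\ga_s$ --- the only ingredient you name: if $\Lambda$ itself had a pole at $\ga_s$, a merely holomorphic $\d K\,dz$ would still leave a nonzero residue, and a pole of $\Lambda$ is a real threat because $M_a$ has second-order poles at $\ga_s$ and $\Psi^{-1}$ has a pole there. The paper's \refL{holK} therefore proves holomorphy at the $\ga$-points of \emph{both} $K$ and $F_a$, and the $F_a$ half is a genuine computation: one expands $(\partial_a+M_a)\Psi^{-1}$ using the $M$-operator form \refE{expan}--\refE{res1} of $M_a$, the genericity relations \refE{gpos}, and, crucially, the equations of motion \refE{mov-data} for the Tyurin parameters ($\partial_a z_\ga=-\mu_a^t\s\a$, $\partial_a\a=-M_{0a}\a+\k\a$), which are exactly what make the singular terms cancel. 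This computation, like the one for $K$, has to be done case by case ($\gl(n)$, $\so(n)$, $\spn(2n)$). Once you add it, your localisation to $P_i$ and the final identity $\tr(K^k\d K)=\tfrac1{k+1}\,\d\,\tr(L^{k+1})$ go through and your argument coincides with the paper's; without it, the step "only the residue at $P_i$ survives" is unjustified.
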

Before we start proving the theorem let us consider some
properties of $L$, $\partial_a+M_a$ and their spectra. First of
all these operators commute due to the Lax equation, hence the
rows of $\Psi$ are eigenvectors for both of them.  The diagonal
forms of those two operators are defined as follows:
\begin{equation}\label{E:eigenPsi}
  K=\Psi L\Psi^{-1},\ F_a=\Psi (\partial_a+M_a)\Psi^{-1}.
\end{equation}
Equivalently
\begin{equation}\label{E:eigenPsi1}
  \Psi L=K\Psi ,\ \partial_a\Psi= \Psi M_a-F_a\Psi.
\end{equation}
These relations imply
\begin{equation}\label{E:constK}
  \partial_aK=0.
\end{equation}
As shown above $L$ is conjugated to a function holomorphic at the
$\ga$-points, hence its spectrum $K$ is also holomorphic  there.
But $L$ itself is singular, hence for $L$ in a generic position
$\det\Psi$ has a (simple) zero at every $\ga$-point. Assume
\[
\Psi(z-z_\ga)=\Psi_0+\Psi_1(z-z_\ga)+\ldots,\ \
\Psi^{-1}(z-z_\ga)=\frac{\tilde\Psi_{-1}}{(z-z_\ga)}+\tilde\Psi_0+\ldots
.
\]
Then in a generic position the holomorphy of $\Psi L$ and the
relations $\Psi\Psi^{-1}=id$ are equivalent to the following
relations
\begin{equation}\label{E:gpos}
 \Psi_0\a=0, \ \varepsilon\a^t\s{\tilde\Psi}_0=0, \
 {\tilde\Psi}_{-1}=\a{\tilde\b}^t.
\end{equation}
Observe that if $\varepsilon=0$ (i.e. $\g=\gl(n)$) then
$\nu=\l=0$. Hence
$\nu\a^t\s{\tilde\Psi}_0=\l\a^t\s{\tilde\Psi}_0=0$ as well. For
the same reason $\varepsilon\a^t\s\a=\nu\a^t\s\a=\l\a^t\s\a=0$.
\begin{remark}
In the presence of antiinvolution $\Psi$ is {\it always}
holomorphic at the $\ga$-points. Indeed, if $\Psi$ has a pole at a
point $\ga$, let us operate by the antiinvolution on the above
relation. We obtain $(\s(\Psi^t)^{-1}\s^{-1})(\s
L^t\s^{-1})(\s\Psi^t\s^{-1})=\s K^t\s^{-1}$. By replacing $\s
L^t\s^{-1}$ with $-L$ we obtain $(\s\Psi^t)^{-1} L(\s\Psi^t)=
-K^t$. In the last relation, $-K^t$ still is diagonal, and the
matrix to the left of $L$ is holomorphic.
\end{remark}
\begin{remark} If $L$ has a second order pole at a $\ga$, the
relations $\Psi_0\a=0$ and $\a^t\s\tilde\Psi_0=0$ hold always at
the $\ga$. Indeed, by $\Psi L=K\Psi$ we have
\[
\left(\Psi_0+\Psi_1(z-z_\ga)+\ldots\right)\left(
\nu\frac{\a\a^t\s}{(z-z_\ga)^2}+\ldots\right)=O(1).
\]
Hence $\Psi_0\a\a^t\s=0$. Since $\a\ne 0$ and $\s$ is a
nondegenerate quadratic form, we have $\Psi_0\a=0$.

Consider now the same relation in the form
$L\Psi^{-1}=K\Psi^{-1}$. For the left hand side we obtain by
computation
$L\Psi^{-1}=\nu\frac{\a\a^t\s\tilde\Psi_0}{(z-z_\ga)^2}+O((z-z_\ga)^{-1})$
while the right hand side has the pole of order $-1$, at most. As
above, we conclude that $\a^t\s\tilde\Psi_0=0$.

\end{remark}
\begin{lemma}\label{L:holK}
The matrix-valued functions $K$ and $F_a$ are holomorphic at all
$\ga$-points provided \refE{gpos} hold there.
\end{lemma}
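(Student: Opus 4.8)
The plan is to work in a local coordinate centered at a $\ga$-point, writing $x=z-z_\ga$, and to expand every object in its Laurent series there. Since $\Psi$ is holomorphic, $\Psi=\Psi_0+\Psi_1 x+\dots$, while $\det\Psi$ has a simple zero, so $\Psi^{-1}=\tilde\Psi_{-1}x^{-1}+\tilde\Psi_0+\dots$ with $\tilde\Psi_{-1}=\a\tilde\b^t$ by \refE{gpos}. Because $L$ and $M_a$ have poles of order at most two, $\Psi$ is holomorphic, and $\Psi^{-1}$ has only a simple pole, both $K=(\Psi L)\Psi^{-1}$ and, via \refE{eigenPsi1}, $F_a=(\Psi M_a-\partial_a\Psi)\Psi^{-1}$ can a priori carry poles; the entire task is to show that all negative-order coefficients vanish once \refE{gpos} is imposed.

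For $K$ the argument is purely algebraic. As noted just before \refE{gpos}, those relations are equivalent to holomorphy of $\Psi L$ together with $\Psi\Psi^{-1}=\mathrm{id}$; in particular $\Psi L$ is regular, so $K=(\Psi L)\Psi^{-1}$ has at most a simple pole, with coefficient $B_0\a\tilde\b^t$, where $B_0=(\Psi L)\big|_{x=0}$. It remains to check $B_0\a=0$. Expanding $B_0=\Psi_0 L_0+\Psi_1 L_{-1}+\Psi_2 L_{-2}$ and applying it to $\a$, one uses $L_0\a=\k_L\a$ and $\b^t\s\a=0$ from \refE{triv}, the vanishing $\nu\,\a^t\s\a=\eps\,\a^t\s\a=0$ (from \refE{add1} in the orthogonal case and from antisymmetry of $\s$ in the symplectic case), and $\Psi_0\a=0$ from \refE{gpos}, to collapse $B_0\a$ to $\k_L\Psi_0\a=0$. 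Hence $K$ is holomorphic.

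The term $F_a$ is the genuinely new part, since it mixes the algebra with the flow. Write $P:=\Psi M_a-\partial_a\Psi$, so $F_a=P\Psi^{-1}$. The only pole of $P$ comes from $M_a$ and is simple, with coefficient $C_{-1}=\eps\Psi_0\mu\,\a^t\s+\l\Psi_1\a\,\a^t\s$ (here $\Psi_0\a=0$ kills the $\a\mu^t$ piece). Thus $F_a$ can have a double and a simple pole. The double pole $C_{-1}\tilde\Psi_{-1}=C_{-1}\a\tilde\b^t$ vanishes since $\eps\,\a^t\s\a=\l\,\a^t\s\a=0$; and in the simple-pole coefficient $C_{-1}\tilde\Psi_0+P_0\a\tilde\b^t$ the first summand vanishes by the middle relation of \refE{gpos} together with its companions $\eps\,\a^t\s\tilde\Psi_0=\l\,\a^t\s\tilde\Psi_0=0$. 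Everything therefore reduces to proving $P_0\a=0$, where $P_0=(\Psi M_a)\big|_{x=0}-(\partial_a\Psi)\big|_{x=0}$.

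This last identity is where the equations of motion enter, and it is the main obstacle. Computing $(\Psi M_a)\big|_{x=0}\a$ from the coefficients $M_{-2}=\l\a\a^t\s$, $M_{-1}=(\a\mu^t+\eps\mu\a^t)\s$, $M_0$, and using $\mu^t\s\a=-\dot z_\ga$ and $M_0\a=\k\a-\dot\a$ from \refE{mov-data} (together with $\eps\,\a^t\s\a=\l\,\a^t\s\a=0$ and $\Psi_0\a=0$), one gets $(\Psi M_a)\big|_{x=0}\a=-\dot z_\ga\,\Psi_1\a-\Psi_0\dot\a$. On the other hand, because the expansion is centered at the \emph{moving} point $z_\ga(t_a)$, differentiation yields $(\partial_a\Psi)\big|_{x=0}=\dot\Psi_0-\dot z_\ga\Psi_1$, so $(\partial_a\Psi)\big|_{x=0}\a=\dot\Psi_0\a-\dot z_\ga\Psi_1\a$. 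Subtracting, the $\dot z_\ga\Psi_1\a$ terms cancel and $P_0\a=-(\dot\Psi_0\a+\Psi_0\dot\a)=-\partial_a(\Psi_0\a)=0$, the last step by differentiating the identity $\Psi_0\a=0$ of \refE{gpos} along the flow. Hence $F_a$ is holomorphic as well. The delicate bookkeeping is exactly the $-\dot z_\ga\Psi_1$ contribution of the moving pole, matched by the pole-motion equation $\dot z_\ga=-\mu^t\s\a$; for $\g=\spn(2n)$ one has in addition genuine second-order terms ($\l,\nu\neq0$), whose $x^{-2}$ contributions are removed by the same relations $\l\,\a^t\s\a=\nu\,\a^t\s\a=0$ and $\l\,\a^t\s\tilde\Psi_0=0$, so the computation stays uniform across the whole list of algebras.
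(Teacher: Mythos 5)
Your treatment of $F_a$ is correct and takes a route genuinely different from (in fact dual to) the paper's: the paper expands $(\partial_a+M_a)\Psi^{-1}$, i.e.\ differentiates $\Psi^{-1}$ in time, uses the moving-pole contribution $(\partial_a z_\ga)\tilde\Psi_{-1}$ together with \refE{mov-data} to reduce the whole singular part to the form $\a\,(\cdot)^t/(z-z_\ga)$, and then kills it on the left by $\Psi_0\a=0$; you differentiate $\Psi$ instead, reduce everything to $P_0\a=0$, and obtain that by differentiating the constraint $\Psi_0\a=0$ along the flow. Both versions use exactly the same inputs --- \refE{gpos}, the equations of motion \refE{mov-data}, and the vanishing relations $\eps\a^t\s\a=\nu\a^t\s\a=\l\a^t\s\a=0$, $\l\a^t\s\tilde\Psi_0=\nu\a^t\s\tilde\Psi_0=0$ --- and also share the same implicit assumption (present in the paper as well) that \refE{gpos} persists in time so that it may be differentiated.

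The $K$ half of your proof, however, has a genuine gap: the opening claim that \refE{gpos} forces $\Psi L$ to be regular. The sentence in the paper preceding \refE{gpos} is itself an unproven assertion, and the implication you need from it is false outside $\gl(n)$: under \refE{gpos} together with \refE{triv}, \refE{add1}, \refE{add2} one only gets
\[
(\Psi L)_{-1}=\Psi_0L_{-1}+\Psi_1L_{-2}
=\eps\,(\Psi_0\b)\,\a^t\s+\nu\,(\Psi_1\a)\,\a^t\s,
\]
and nothing in those hypotheses makes $\Psi_0\b$ or $\Psi_1\a$ vanish; so for $\so(n)$ and $\spn(2n)$ --- precisely the cases this paper adds to Krichever's $\gl(n)$ theory --- $\Psi L$ may a priori have a simple pole. (Deducing regularity of $\Psi L=K\Psi$ from the a priori holomorphy of the spectrum would be circular here, since holomorphy of $K$ is what is being proved.) As a result your expansion of $K=(\Psi L)\Psi^{-1}$ omits the potentially singular terms $(\Psi L)_{-1}\tilde\Psi_{-1}(z-z_\ga)^{-2}$ and $(\Psi L)_{-1}\tilde\Psi_0(z-z_\ga)^{-1}$, and the verification $B_0\a=0$ alone does not prove holomorphy of $K$. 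The repair lies within the tools you already use for $F_a$: since $(\Psi L)_{-1}$ is right-proportional to $\a^t\s$, the two extra terms vanish by $\eps\a^t\s\a=\nu\a^t\s\a=0$, $\tilde\Psi_{-1}=\a\tilde\b^t$, and $\eps\a^t\s\tilde\Psi_0=\nu\a^t\s\tilde\Psi_0=0$. This is exactly what the paper's proof does: it never invokes regularity of $\Psi L$, but writes out the full Laurent expansion of $\Psi L\Psi^{-1}$ (orders $-3$, $-2$, $-1$) and kills each coefficient directly from \refE{gpos}.
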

\begin{proof}
\begin{align*}
\Psi L\Psi^{-1}= &
\frac{\nu\Psi_0\a\a^t\s{\tilde\Psi}_{-1}}{(z-z_\ga)^3}
+\frac{\nu\Psi_0\a\a^t\s{\tilde\Psi}_0
+\Psi_0(\a\b^t+\varepsilon\b\a^t)\s{\tilde\Psi}_{-1}
+\nu\Psi_1\a\a^t\s{\tilde\Psi}_{-1}}{(z-z_\ga)^2}\\
+&\frac{\nu\Psi_0\a\a^t\s{\tilde\Psi}_1
+\Psi_0(\a\b^t+\varepsilon\b\a^t)\s{\tilde\Psi}_0
+\Psi_0L_0{\tilde\Psi}_{-1}
+\nu\Psi_1\a\a^t\s{\tilde\Psi}_0}{z-z_\ga} \\
+&\frac{\Psi_1(\a\b^t+\varepsilon\b\a^t)\s{\tilde\Psi}_{-1}
+\nu\Psi_2\a\a^t\s{\tilde\Psi}_{-1}}{z-z_\ga} +O(1).
\end{align*}
The singular part of that expression obviously vanishes under the
conditions \refE{gpos}.

Let us prove now holomorphy of the spectrum of $\partial_a+M_a$.
Indeed, this spectrum is expressed by the matrix
$F_a=\Psi(\partial_a+M_a)\Psi^{-1}$. Using the expansions for
$M_a$ and $\Psi^{-1}$ at $\ga$ we obtain
\begin{align*}
(\partial_a+M_a)&\Psi^{-1}=\frac{(\l\a\a^t\s)(\a{\tilde\b}^t)}{(z-z_\ga)^3}
+\frac{(\partial_az_\ga)\a{\tilde\b}^t+\l\a\a^t\s{\tilde\Psi}_0
  +(\a\mu_a^t+\varepsilon\mu_a\a^t)\s\a{\tilde\b}^t}{(z-z_\ga)^2}\\
&+\frac{(\partial_a\a){\tilde\b}^t+\a(\partial_a{\tilde\b}^t)+
\l\a\a^t\s{\tilde\Psi}_1+(\a\mu_a^t+\varepsilon\mu_a\a^t)\s{\tilde\Psi}_0
+ M_{0a}\a{\tilde\b}^t}{z-z_\ga}+O(1)
\end{align*}
After omitting the terms vanishing by the relations \refE{gpos}
(see also the relations on $\a^t\s\a$ there) we obtain
\[
(\partial_a+M_a)\Psi^{-1}=\frac{(\partial_az_\ga+\mu_a^t\s\a)\a{\tilde\b}^t}
{(z-z_\ga)^2} +
\frac{(\partial_a\a+M_{a0}\a){\tilde\b}^t}{z-z_\ga}+\frac{\a{\hat\b}^t}{z-z_\ga}+O(1)
\]
where $\hat\b^t=\partial_a{\tilde\b}^t+
\l\a^t\s{\tilde\Psi}_1+\mu_a^t\s\Psi_0$. Due to \refE{mov-data}
the first summand vanishes, and the second summand is equal to
$\k\a{\tilde\b}^t$. Hence
$(\partial_a+M_a)\Psi^{-1}=\frac{\a{\hat\b}^t+\k\a{\tilde\b}^t}{z-z_\ga}+O(1)$,
and $\Psi(\partial_a+M_a)\Psi^{-1}=O(1)$ by $\Psi_0\a=0$.
\end{proof}
\begin{proof}[Proof of the \refT{Hamil}] Modulo \refL{holK} the
proof of the theorem is the same as in \cite{Klax}. We give it
here for completeness.

By definition
\[
i_{\partial_a}\w=\w(\partial_a,\cdot)=-{1\over 2}\left(
\sum_{s=1}^K\res_{\ga_s}\Lambda+\sum_{i=1}^N\res_{P_i}\Lambda\right),
\]
where $\Lambda=\Omega(\partial_a,\cdot)$. By
$\d\Psi(\partial_a)=\partial_a\Psi$ and $\d
L(\partial_a)=\partial_aL$ (the evaluation of a differential on a
vector field is the derivative along that vector field) we have
\[
\Lambda=\tr\left(\partial_a\Psi\cdot\d
L\cdot\Psi^{-1}-\d\Psi\cdot\partial_a
L\cdot\Psi^{-1}-\partial_aK\cdot\d\Psi\cdot\Psi^{-1}+\d
K\cdot\partial_a\Psi\cdot\Psi^{-1}\right).
\]
By \refE{eigenPsi1} and the Lax equation
\begin{align*}
\Lambda=&\tr\left((\Psi M_a-F_a\Psi)\d
L\cdot\Psi^{-1}-\d\Psi[L,M_a]\Psi^{-1}+\d K(\Psi
M_a-F_a\Psi)\Psi^{-1}\right)\\
=&\tr\left( M_a\d L-F_a\Psi\d
L\cdot\Psi^{-1}-\d\Psi[L,M_a]\Psi^{-1}+\d K\Psi M_a\Psi^{-1}-\d
KF_a\right).
\end{align*}
Let us transform the middle term. By $\Psi L=K\Psi$ we have
$\d\Psi\cdot L=-\Psi\d L+\d K\Psi+K\d\Psi$. Hence
\begin{align*}
\tr\,\d\Psi[L,M_a]\Psi^{-1}= &\,\tr\left( (\d\Psi\cdot
L)M_a\Psi^{-1}-\d\Psi M_aL\Psi^{-1}\right)\\
=&\,\tr\left( (-\Psi\d L+\d K\Psi+K\d\Psi)M_a\Psi^{-1}-\d\Psi
M_aL\Psi^{-1}\right)\\
=&\,\tr\left( -\Psi\d LM_a\Psi^{-1}+\d K\Psi M_a\Psi^{-1}+K\d\Psi
M_a\Psi^{-1}-\d\Psi M_aL\Psi^{-1}\right).
\end{align*}
The last two terms annihilate because
\[
\tr\left( \d\Psi M_aL\Psi^{-1}\right)=\tr\left( \d\Psi
M_a\Psi^{-1}(\Psi L\Psi^{-1})\right)=\tr\left( \d\Psi
M_a\Psi^{-1}K\right),
\]
and we obtain
\[
\tr\,\d\Psi[L,M_a]\Psi^{-1}=\tr\left( -\d LM_a+\d K\Psi
M_a\Psi^{-1}\right).
\]
Substituting that to the last expression for $\Lambda$ we obtain
\[
\Lambda=\tr\left( 2M_a\d L-F_a\Psi\d L\cdot\Psi^{-1}-\d
KF_a\right).
\]
The last two terms are equal (under the symbol of $\tr$) which can
be obtained by replacing $\Psi\d L$ with $-\d\Psi L+\d
K\Psi+K\d\Psi$. Hence we finally obtain
\[
\Lambda=\tr\left( 2M_a\d L-2\d KF_a\right).
\]
This gives
\begin{equation}\label{E:final}
i_{\partial_a}\w=\sum_{j=1}^N\res_{P_j}\tr(\d K\, F_a)dz-R_a,
\end{equation}
where
\begin{equation}\label{E:R_a}
R_a=\sum_{s=1}^K\res_{\ga_s}\tr(\d LM_a)dz+\sum_{j=1}^N
\res_{P_j}\tr(\d LM_a)dz.
\end{equation}
Observe that the sum over the $\ga$-points in \refE{final}
vanishes because $\d K$ and $F_a$ are holomorphic at the
$\ga$-points (\refL{holK}). Observe also that the $\ga$-points and
the points $P_j$ are not all singularities of the function $F_a$.
Indeed $F_a=-\partial_a\Psi\cdot\Psi^{-1}-\Psi M_a\Psi^{-1}$, and
$\Psi^{-1}$ has poles at the branching points of eigenvalues of
$L$.

On the contrary, $L$ and $M_a$ are holomorphic everywhere except
at the points $\ga_s$ and $P_j$. For this reason all singularities
of the the 1-form $M_a\d L$ are located at those points. Hence
$R_a=0$ as the sum of residues of a meromorphic 1-form over all
its poles. Moreover, by the construction of $M_a$ the matrix $F_a$
is holomorphic at $P_j$, $j\ne i$ for all times $a$ corresponding
to the point $P_i$. For this reason for such times
\[
i_{\partial_a}\w=\res_{P_i}\tr(\d K\, F_a)dz.
\]
But $F_a=-\partial_a\Psi\cdot\Psi^{-1}-\Psi M_a\Psi^{-1}$, and
$\partial_a\Psi\cdot\Psi^{-1}$ is holomorphic at $P_i$ because the
coordinate of $P_i$ is independent of any time, hence
$\partial_a\Psi$ and $\Psi$ have the same order at $P_i$. Hence
$F_a=-\Psi M_a\Psi^{-1}+O(1)$. By definition $M_a=w^{-m}L^k+O(1)$
at $P_i$ for $a=(P_i,k,m)$. That implies $\Psi
M_a\Psi^{-1}=w^{-m}\Psi L^k\Psi^{-1}+O(1)$ since $\Psi$ is
holomorphically invertible  at $P_i$. But $\Psi L^k\Psi^{-1}=K^k$,
hence $F_a=w^{-m}K^k+O(1)$ at $P_i$. Since $\d Kdz$ is holomorphic
at $P_i$ we obtain
\[
\begin{aligned}
i_{\partial_a}\w=-&\res_{P_i}\tr(w_i^{-m}\d K\,
K^k)dz=-\frac{1}{k+1}\res_{P_i}\d\tr(w_i^{-m}K^{k+1})= \\
=&-\frac{1}{k+1}\d\res_{P_i}\tr(w_i^{-m}L^{k+1})= \d H_a.
\end{aligned}
\]
\end{proof}
The Hamiltonians $H_a$ are in involution since they depend only on
the spectral parameters. We refer to \cite{Klax} for the details.

%%%%%%%%%%%%%%%%%%%%%%%%%%%%%%%%%%%%%%%%%%%%%%%%%%%%%%%%%%
\section{Examples: Calogero-Moser systems}
\label{S:CMoser}

%\subsection{The case of $\gl(n)$}\label{SS:CMgln}{}

Let us start with the example considered in \cite{Klax} --- the
elliptic Calogero-Moser model for $\g=\gl(n)$. Define the Lax
operator by
\begin{equation}\label{E:matrLgl}
 L_{ij}=f_{ij}\frac{\s(z+q_j-q_i)\s(z-q_j)\s(q_i)}{\s(z)\s(z-q_i)\s(q_j-q_i)\s(q_j)}\ (i\ne j), \ \
 L_{jj}=p_j
\end{equation}
where $f_{ij}\in\C$ are constant. This form of $L$ is determined
by two requirements: that $L$ is elliptic and that it has poles at
the points $z=q_i$ ($i=1,\ldots,n$) and $z=0$. The last is the
only immovable pole. By reduction of the remaining gauge freedom
it is obtained in \cite{Klax} that $f_{ij}f_{ji}=1$.  For the
second order Hamiltonian corresponding to that pole we have
according to \refT{Hamil}, up to normalization,
\[
H=\res_{z=0}\,\,\, z^{-1}\!\left(-{1\over 2}\sum_{j=1}^n
p_j^2-\sum\limits_{i<j} L_{ij}L_{ji}\right).
\]
By the addition theorem for Weierstrass functions
\[
-L_{ij}L_{ji}=
\frac{\s(z+q_i-q_j)\s(z+q_j-q_i)}{\s(z)^2\s(q_i-q_j)^2}=\wp(q_i-q_j)-\wp(z),
\]
hence
\[
H=-{1\over 2}\sum_{j=1}^n p_j^2+\sum\limits_{i<j} \wp(q_i-q_j).
\]

%%%%%%%%%%%%%%%%%%%%%%%%%%%%%%%%%%%%%%%%%%%%%%%%%%%%%%%%%%%%%%
%\subsection{The case  of $\so(2n)$}

Let us consider now the case  of $\so(2n)$. We present here
another method of constructing the hierarchies on elliptic curves.
Let $M$-operators take values in the same algebra, i.e. an
$M$-operator is of the form
\[
  M=\begin{pmatrix}
    A & B \\
    C & -A^t
  \end{pmatrix},\ B^t=-B,\ C^t=-C.
\]
We assume that $K=2n$, $(A)+D+\sum\limits_{i=1}^{n} q_i\ge 0$,
$(C)+D+\sum\limits_{i=1}^{n} q_i\ge 0$ and
$(B)+D+\sum\limits_{i=1}^n (-q_i)\ge 0$ where $D=\sum m_iP_i$ as
earlier. Hence the submatrices $A$, $C$ are holomorphic at the
points $-q_i$, and the submatrix $B$ is holomorphic at the points
$q_i$. Let us denote the space of such $M$-operators by ${\mathcal
N}^D$ again.
\begin{lemma}\label{L:dimND_so}
$\dim\ND=(\dim\g)(\deg D+1)$.
\end{lemma}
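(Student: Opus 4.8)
The plan is to mimic the dimension count of \refL{dimND} but now adapted to the different setup of the $\so(2n)$ elliptic construction, where the $M$-operator is genuinely $\so(2n)$-valued (not $\so(2n+1)$-valued) and where the Tyurin-type data are split between the two families of points $\{q_i\}$ and $\{-q_i\}$. First I would apply the Riemann-Roch theorem to the space of $\so(2n)$-valued meromorphic functions whose divisor is bounded below by an effective divisor built from $D$ together with the prescribed pole orders at the $2K$ points $\{q_i\}$ and $\{-q_i\}$. Reading off the constraints from the block form $\left(\begin{smallmatrix} A & B \\ C & -A^t \end{smallmatrix}\right)$ with $B^t=-B$, $C^t=-C$, the three blocks $A,C$ are allowed poles at the $q_i$ and the block $B$ at the $-q_i$ (and vice versa), so the effective divisor picks up $K=2n$ extra points on each side. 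On an elliptic curve $g=1$, so the naive Riemann-Roch output simplifies considerably.

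The heart of the argument is then to subtract the correct number of constraints imposed at each $\ga$-point by the residue conditions \refE{res1} (the rank-one structure of $M_{-2}$ and $M_{-1}$) together with the free parameters $\mu$ (and, in the presence of the relevant $\eps$, the antisymmetry constraints that survive in the orthogonal case). Concretely I would, for each point among the $q_i$ and $-q_i$, count the matrix relations determining the principal part against the free vector parameters $\mu$ exactly as in the proof of \refL{dimND}, and show that the net contribution of each point is $\dim\g$ relations minus the free parameters, so that the surplus beyond $(\dim\g)(\deg D+1)$ cancels. The analog of the identity \refE{remain1} here would read, after setting $g=1$ and $K=2n$, that the number of effective constraints coming from the extra points exactly equals $\dim\so(2n)$, i.e. $n(2n-1)$; verifying this bookkeeping identity is the crux.

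The main obstacle I anticipate is the splitting of the pole data between the two point families $\{q_i\}$ and $\{-q_i\}$: unlike the uniform situation of \refL{dimND}, here the blocks $A,C$ and the block $B$ have their singularities at different sets of points, so the relations \refE{res1} must be reinterpreted block-by-block, and one must be careful that the constraint of $\so(2n)$-membership ($X^t\s+\s X=0$ in the chosen basis) is compatible with the rank-one ansatz on each block and does not over- or under-count. I would handle this by writing $M_{-2}=\nu\a\a^t\s$ and $M_{-1}=(\a\mu^t-\mu\a^t)\s$ (the $\eps=-1$ orthogonal case) in block coordinates adapted to the symplectic-type form $\s$, tracking which blocks the rank-one dyads land in, and then confirming that the count of genuine relations at each point is independent of this splitting. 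Once that is established, the dimension formula $\dim\ND=(\dim\g)(\deg D+1)$ follows exactly as before, since the $g$-dependent correction term vanishes identically on the elliptic curve.
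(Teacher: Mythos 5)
Your overall strategy --- Riemann--Roch on the elliptic curve combined with a block-by-block count of residue constraints against the free parameters $\mu$ --- is the same strategy as the paper's proof, and you are right that the count must be organized separately for the $(A,C,-A^t)$-part (poles at the $q_i$) and the $B$-part (poles at the $-q_i$). But the list of constraints you plan to impose is incomplete, and with only those constraints the count comes out wrong. Writing $d'$ and $d''$ for the dimensions of the subspaces of $\g$ with $B=0$ and with $A=C=0$ respectively (so $d'+d''=\dim\g=2n^2-n$), the ingredients you enumerate give: Riemann--Roch contributions $d'(\deg D+n)+d''(\deg D+n)$; at each $q_i$, $d'$ residue relations softened by the $2n$ parameters $\mu_i',\mu_i''$, themselves subject to the $n$ conditions $\a_i'\mu_i'^t-\mu_i'\a_i'^t=0$ (vanishing of the $B$-block of the dyad); at each $-q_i$, $d''$ residue relations softened by the $n$ parameters $\mu_{-i}'$. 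The total is
\[
d'(\deg D+n)+d''(\deg D+n)-(d'-n)n-(d''-n)n=(\dim\g)\deg D+2n^2,
\]
which exceeds the claimed $(\dim\g)(\deg D+1)=(\dim\g)\deg D+2n^2-n$ by exactly $n$. The missing $n$ relations are not of residue type and have no analog in \refL{dimND}: they come from the fact that the two pole families are not independent Tyurin points. An $M$-operator induces, via \refE{mdata}, a velocity of each $\ga$-point: the velocity of $q_i$ is computed from the data $\mu_i$ at $q_i$, while the velocity of $-q_i$ is computed, independently, from $\mu_{-i}$ at $-q_i$. For the configuration to remain of the symmetric form $\{\pm q_i\}$ these must cancel, $\dot q_i+(-\dot q_i)=0$, one scalar relation for each $i=1,\ldots,n$. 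These $n$ dynamical compatibility relations are precisely the final ``$-n$'' in the paper's count \refE{dimND_so}; without them your bookkeeping cannot close.

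Relatedly, the identity you propose as the analog of \refE{remain1} is misstated: since the Riemann--Roch surplus over $(\dim\g)\deg D$ equals $(\dim\g)n$, the net number of constraints carried by the $2n$ extra points must be $(\dim\g)(n-1)$, not $\dim\g=n(2n-1)$; what must equal $\dim\g$ is the surplus \emph{left over} after all constraints are subtracted. With the $n$ dynamical relations included this is exactly what happens:
\[
(\dim\g)n-(d'-n)n-(d''-n)n-n=2n^2-n=\dim\g .
\]
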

\begin{proof}
Denote the dimension of the subspace of elements in $\g$ with
$B=0$ by $d'$, and of the subspace of elements in $\g$ with
$A=C=0$ by $d''$. Thus $\dim\g=d'+d''$. By Riemann-Roch theorem
for $g=1$ we obtain
\begin{equation}\label{E:dimND_so} \dim{\mathcal
N}^D=d'(\deg D+n)+d''(\deg D+n)-(d'-n)n-(d''-n)n-n.
\end{equation}
The last three summands correspond to the relations. At a point
$q_i$ we have $d'$ relations $\res_{q_i}\begin{pmatrix}
    A & 0 \\
    C & -A^t
  \end{pmatrix}=\begin{pmatrix}
    \a_{i}' \\
    \a_{i}''
  \end{pmatrix}
  \begin{pmatrix}
    \mu_{i}'^t, & \mu_{i}''^t
  \end{pmatrix}\s -
  \begin{pmatrix}
    \mu_{i}' \\
    \mu_{i}''
  \end{pmatrix}
  \begin{pmatrix}
    \a_{i}'^t, &   \a_{i}''^t
  \end{pmatrix}\s$ where $\s=
  \begin{pmatrix}
    0 & E \\
    E & 0
  \end{pmatrix}
$, $E$~is the unit matrix. We also have $2n$ parameters
$\mu_{i}'$, $\mu_{i}''$ subjected to $n$ relations
$\a_{i}'\mu_{i}'^t-\mu_{i}'\a_{i}'^t=0$ which are equivalent to
vanishing the $B$-block.

At a point $-q_{i}$ we have $d''$ relations
$\res_{-q_{i}}B=(\a_{-i}\mu_{-i}^t-\mu_{-i}\a_{-i}^t)\s$ (where
$\a_{-i}=
  \begin{pmatrix}
    \a_{-i}' \\
    0
  \end{pmatrix}
$, $\mu_{-i}=\begin{pmatrix}
     \mu_{-i}' \\
    0
  \end{pmatrix}$, and $n$ free parameters coming from
$\mu_{-i}'$.

The last $n$ relations arise due to the fact that the motions of
$q_i$ and $-q_i$ are related. Since $\dot q_i=
-\mu_{i}'^t\a_{i}''-\mu_{i}''^t\a_{i}'$, $-\dot q_i=
-\mu_{-i}'^t\a_{-i}'$, and $\dot q_i+(-\dot q_i)=0$ we obtain
those additional $n$ relations.

Finally
\[
\dim{\mathcal N}^D=(\dim\g)\deg D+(2n^2-n)=(\dim\g)(\deg D+1).
\]
\end{proof}
We take $L$ in the same form as $M$ where $A$ is given by
\refE{matrLgl}. For $i<j$ we take
\begin{equation}\label{E:matrBC}
B_{ij}=f^B_{ij}\frac{\s(z+q_j+q_i)\s(z-q_j)}{\s(z)\s(z+q_i)\s(q_i+q_j)},
\ \
C_{ji}=f^C_{ji}\frac{\s(z-q_j-q_i)\s(z+q_i)}{\s(z)\s(z-q_j)\s(q_i+q_j)}
\end{equation}
where $f^B_{ij},f^C_{ij}\in\C$ are constant. These relations
determine the matrices $B$ and $C$ due to skew-symmetry. Similar
to the case of $\gl(n)$ we obtain $f^B_{ij}f^C_{ji}=-1$ by
reduction of the remaining gauge freedom (taking account of the
relation $\a^t\s\a=0$ which descends to $\a'^t\a''=0$ in this
case). For the Hamiltonian we have
\begin{align*}
H=&-\res_{z=0}\,\,\, z^{-1}\left(\sum_{i=1}^n p_i^2+2\sum_{i<j}
A_{ij}A_{ji}+2\sum_{i<j}B_{ij}C_{ji}\right)\\
=&-\sum_{i=1}^n
p_i^2+2\sum_{i<j}\wp(q_i-q_j)+2\sum_{i<j}\wp(q_i+q_j).
\end{align*}
%%%%%%%%%%%%%%%%%%%%%%%%%%%%%%%%%%%%%%%%%%%%%
%\subsection{The case of $\spn(2n)$}{}

Let us consider now the case $\g=\spn(2n)$. Define $\ND$ as the
space of $M$-operators taking values in $\g^\diamond=\tsp$, with
$n+1$ pairs of double poles $\pm q_i$ ($i=1,\ldots,n+1$).  Thus $
M=\left(\begin{smallmatrix}
    0 & -a^t\quad\!\!\! b^t & c \\
    0 & \boxed{\begin{smallmatrix}
                A & \ B  \\
                C & -A^t
  \end{smallmatrix}} & \begin{smallmatrix} b_{_{_{_{}}}}\\ a_{_{}} \end{smallmatrix}   \\
    0 & 0        & 0
  \end{smallmatrix}\right)$
where $a,b\in\C^{n}$, $c\in\C$, $A$, $B$, $C$ are $n\times n$
matrices, $B=B^t$, $C=C^t$. We assume that
$(A)+D+\sum\limits_{i=1}^{n+1} 2q_i\ge 0$ (and the same for the
divisors of $C$, $a$,$b$, $c$) and $(B)+D+\sum\limits_{i=1}^{n+1}
2(-q_i)\ge 0$.

The counterpart of the relation \refE{dimND_so} writes
\begin{align*}
  \dim{\mathcal N}^D =& d'(\deg
D+2n+2)+d''(\deg D+2n+2)  \\
  & -(2d'-n-1)(n+1)-(2d''+1-n)(n+1)-n.
\end{align*}
where $d''$ corresponds to the (matrix) dimension of the $B$-block
and $d'$ corresponds to the remainder of the matrix. To explain
the second line let us notice that at each pole $q_1,\ldots,q_n$
we have $2d'$ relations corresponding to the fact that the form of
$-1$ and $-2$ order terms is prescribed. We want to have $\nu=0$
which leads to one more relation following from \refE{mov-n} but
this relation is compensated as follows: we actually don't care of
asymptotical behavior of the entry $c$ in the matrix $M$ because
it corresponds to the center of $\tsp$, so we can omit the
relation on the second order pole for this entry. We also have
$2n+1$ parameters, given by $\mu$, subjected to $n$ vanishing
conditions for the $\res_{q_i}B$. Thus we have $2d'-(n+1)$
effective relations at every $q_i$. Let us notice also that the
parameters $\l$ are compensated by the relations $\a^t\s M_1\a=0$.
At a point $-q_i$ we have $2d''+1-n$ relations since $\mu$ is
$n$-dimensional for the block $B$, and there is nothing to
compensate the relation following from \refE{mov-n}. The last
$(-n)$ corresponds to the relations $\dot q_i+(-\dot q_i)=0$,
$i=1,\ldots,n$ (and we do not care of the behavior of the last
pair of poles).

Thus we obtain
\[
\dim{\mathcal N}^D= (\dim\g^\diamond)(\deg D)+2n^2+n.
\]
Let us notice that $2n^2+n=\dim\g$, i.e. this is the dimension of
the submatrix $\left(\begin{smallmatrix}
                A & \ B  \\
                C & -A^t
  \end{smallmatrix}\right)$
of $M$. Thus we can require that this submatrix vanished at $P_0$
instead of the normalization condition (iii), page
\pageref{exist_flow}, in the construction of the flows $M_a$.

Let us take $L$ in the same form as $M$ where the corresponding
elements $A_{ij}$, $B_{ij}$, $C_{ij}$ are defined by
\refE{matrLgl} (with $A$ instead $L$),\refE{matrBC}. The relations
\refE{matrBC} make sense for $i=j$ and their contribution to the
second order Hamiltonian is equal to
\[
B_{ii}C_{ii}=f_{ii}^Bf_{ii}^C(\wp(2q_i)-\wp(z))
\]
where we can set $f_{ii}^Bf_{ii}^C$ to a constant $\varepsilon$.
The submatrices $a$ and $b$ of the matrix $L$ do not contribute
into the Hamiltonians regardless to any explicit form of them.
Hence
\[
H = -\sum_{i=1}^n
p_i^2+2\sum_{i<j}\wp(q_i-q_j)+2\sum_{i<j}\wp(q_i+q_j)+\varepsilon\sum\limits_{i=1}^n
\wp(2q_i).
\]
which is a conventional form of the second order Hamiltonian of
the elliptic Calogero-Moser model in the symplectic case.

%%%%%%%%%%%%%%%%%%%%%%%%%%%%%%%%%%%%%%%%%%%%%

%%%%%%%%%%%%%%%%%%%%%%%%%%%%%%%%%%%%%%%%%%%%%%%%%%%%

\begin{thebibliography}{10}
%%%%%%%%%%%%%%%%%%%%%%%%%%%%%%%%%%%%%%%%
\define\PL{Phys. Lett. B}
\define\NP{Nucl. Phys. B}
\define\LMP{Lett. Math. Phys. }
\define\JGP{JGP}
\redefine\CMP{Commun. Math. Phys. }
\define\JMP{J.  Math. Phys. }
\define\Izv{Math. USSR Izv. }
\define\FA{Funktional Anal. i. Prilozhen.}
\def\Pnas{Proc. Natl. Acad. Sci. USA}
\def\PAMS{Proc. Amer. Math. Soc.}
%%%%%%%%%%%%%%%%%%%%%%%%%%%%%%%
\bibitem{dHPh}
D'Hocker, E., Phong, D.H. \emph{Calogero-Moser Lax pairs with
spectral parameter for general Lie algebras}. Hep-th/9804124.


\bibitem{Klax}
Krichever, I.M. \emph{Vector bundles and Lax equations on
algebraic curves}. Comm. Math. Phys. 229, 229--269 (2002).


\bibitem{Kr_ellCM}
Krichever, I.M. \emph{Elliptic solutions to the
{K}adomtsev-{P}etviashvili equation and integrable systems of
particles}. Fuct. Analysis and Appl. 4,\textbf{14} (1980), p.
45--54.


\bibitem{rKNU}
Krichever I.M., Novikov S.P. \emph{Holomorphic bundles on
algebraic curves and nonlinear equations}. Uspekhi Math. Nauk
(Russ. Math.Surv), {\bf 35} (1980),  6, 47--68.

\bibitem{KNfaa}
Krichever I.M., Novikov S.P. \emph{Holomorphic bundles on Riemann
surfaces and Kadomtsev-Petviashvili equation.1}. Funct. Anal. and
Appl., {\bf 12} (1978),  4, 41--52.


\bibitem{KrComm}
Krichever I.M. \emph{Commutative rings of ordinary linear
differential operators}. Functional Anal. and Appl. (Russ.), {\bf
12} (1978), n 3, 20--31.

\bibitem{KNFa}
Krichever, I.M. Novikov, S.P. \emph{Algebras of {V}irasoro type,
{R}iemann
  surfaces and structures of the theory of solitons}. Funktional Anal. i
  Prilozhen. \textbf{21}, No.2 (1987), 46-63.

\bibitem{KSlax}
Krichever, I.M., Sheinman, O.K. \emph{Lax operator algebras}.
Funct. Anal. i Prilozhen., {\bf 41} (2007), no. 4, p. 46-59.
math.RT/0701648.

\bibitem{Perelomov}
Perelomov, A.M. \emph{{I}ntegrable systems of classical mechanics
and {L}ie algebras}, Birkh{\"a}user Verlag, Basel, 1990.


\bibitem{SSlax}
Schlichenmaier, M., Sheinman, O.K. {\it {C}entral extensions of
{L}ax operator algebras}. Russ. Math. Surv., {\bf 63}, no.4, p.
131-172. ArXiv:0711.4688.



\bibitem{ShN65}
Sheinman, O.K. \emph{Krichever-Novikov algebras, their
representations and applications}. In: Geometry, Topology and
Mathematical Physics. S.P.Novikov's Seminar 2002-2003,
V.M.Buchstaber, I.M.Krichever, eds., AMS Translations, Ser.2, v.
212 (2004), 297--316, math.RT/0304020.

\bibitem{ShN70AMS}
Sheinman, O.K. \emph{On certain current algebras related to
finite-zone integration}. In: Geometry, Topology and Mathematical
Physics. S.P.Novikov's Seminar 2004-2008, V.M.Buchstaber,
I.M.Krichever, eds., AMS Translations, Ser.2, v.224 (2008).

\bibitem{Sh_lopa}
Sheinman, O.K. \emph{Lax operator algebras and integrable
hierarchies}. In: Proc. of the Steklov Institute of Mathematics,
2008, v.263.

\bibitem{Tyvb}
Tyurin, A.N. \emph{Classification of vector bundles on an
algebraic curve of an arbitrary genus}. Soviet Izvestia, ser.
Math., \textbf{29}, 657--688.

\end{thebibliography}
\end{document}
%%%%%%%%%%%%%%%%%%%%%%%%%%%%%%%%%%%%%%%%%%%%%%%%
%%   THE END
%%%%%%%%%%%%%%%%%%%%%%%%%%%%%%%%%%%%%%%%%%%